\numberwithin{equation}{section}
\newcommand{\dd}{\text{\rm d}}
\DeclareMathOperator{\Supp}{Supp}
\newtheoremstyle{theoreme}
  {10pt}
  {10pt}
  {\itshape}
  {}
  {\bfseries}
  {: }
  { }
  {\thmname{#1}\thmnumber{ #2}\thmnote{ #3}}
\newtheoremstyle{lemme}
  {10pt}
  {10pt}
  {\itshape}
  {}
  {\bfseries}
  {: }
  { }
  {\thmname{#1}\thmnumber{ #2}\thmnote{ #3}}
\newtheoremstyle{remarque}
  {\topsep}
  {\topsep}
  {\normalfont}
  {}
  {\itshape}
  {: }
  { }
  {\thmname{#1}\thmnumber{ #2}\thmnote{ #3}}
\newtheoremstyle{reponse}
  {\topsep}
  {\topsep}
  {\normalfont}
  {}
  {\itshape}
  {: }
  { }
  {\thmname{#1}\thmnote{ #2}}
\newtheoremstyle{theoremeetdef}
  {10pt}
  {10pt}
  {\itshape}
  {}
  {\bfseries}
  {: }
  { }
  {\thmname{#1}\thmnumber{ #2}\thmnote{ #3}}
\theoremstyle{theoreme}
\newtheorem{theorem}{Theorem}[section]
\theoremstyle{lemme}
\newtheorem{lemma}[theorem]{Lemma}
\newtheorem{proposition}[theorem]{Proposition}
\newtheorem{definition}[theorem]{Definition}
\theoremstyle{remarque}
\newtheorem{remark}[theorem]{Remark}
\begin{document}
\title[Quenched large deviations for interacting diffusions]{Quenched large deviations for interacting diffusions in random media}

\author{Eric Lu\c{c}on}
\address{Universit\'e Paris Descartes, Sorbonne Paris Cit\'e, Laboratoire MAP5, UMR 8145, 75270 Paris, France}

\date{\today}
\begin{abstract}
The aim of the paper is to establish a large deviation principle (LDP) for the empirical measure of mean-field interacting diffusions in a random environment. The point is to derive such a result once the environment has been frozen (quenched model). The main theorem states that a LDP holds for every realization of the environment, with a rate function that does not depend on the disorder and is different from the rate function in the averaged model. Similar results concerning the empirical flow and local empirical measures are provided.
  \\[10pt]
  2010 \textit{Mathematics Subject Classification: 60F10, 60K35, 82B44}\\
  \textit{Keywords: mean-field particle systems, large deviation principle, disordered systems, Sanov theorem.}
\end{abstract}

\maketitle


\section{The model and main results}
\subsection{Disordered mean-field interacting diffusions}
Consider, for $T>0$, $N\geq1$, the system of $N$ coupled diffusions: for $t\in[0, T]$,
\begin{equation}
\label{eq:edsi}
x_{i, t} = x_{ i, 0}  + \int_{0}^{t} g(x_{ i, s}, \omega_{ i}) \dd s- \int_{0}^{t} \partial_{ x_{ i}}H_{N}(x_{1, s}, \dots, x_{N, s}, \omega_{ 1}, \ldots \omega_{ N})\dd s  + B_{i, t},\ i=1, \ldots, N,
\end{equation}
where $\{B_{ i}\}_{ i=1, \ldots, N}$ is a sequence of independent standard Brownian motions and where the Hamiltonian $H_{ N}$ is given by :
\begin{equation}
\label{eq:HN}
H_{ N}(x_{ 1}, \ldots, x_{ N}, \omega_{ 1}, \ldots, \omega_{ N}) := \frac{1}{2N} \sum_{k,l=1}^{N}f(x_{k}-x_{l}, \omega_{ k}, \omega_{ l}),
\end{equation}
for some regular functions $f$ and $g$. Here, each trajectory $ \left\lbrace x_{ i, t}\right\rbrace_{ t\in[0, T]}$ is an element of $ \mathcal{ E}:= \mathcal{ C}([0, T], \mathbb{ R})$, the set of continuous functions on $[0, T]$. 

In \eqref{eq:edsi}, both the intrinsic dynamics $g$ and the interaction kernel $f$ are perturbed by a fixed (deterministic) sequence $\{ \omega_{ i}\}_{ i\geq1}$. One has to think of $ \omega_{ i}\in \mathcal{ F}:= \mathbb{ R}$ has a frozen typical realization of an intrinsic disorder associated to the particle $x_{ i}$: \eqref{eq:edsi} is a system of mean-field diffusions in a quenched random environment. Typical examples of \eqref{eq:edsi} can be found in numerous situations in statistical physics (e.g. synchronization \cite{Acebron2005}, neuroscience \cite{Faugeras:2014ab}, social interactions \cite{Collet2010}, etc.).

\subsubsection{Quenched large deviations}

The main results of the paper (Theorem~\ref{theo:LN_x_om} and Theorem~\ref{theo:tilde_LN_x_om} below) concern quenched large deviation principles (LDP) as $N\to \infty$ for the empirical measure 
\begin{equation}
\label{eq:emp_meas_intro}
L_{ N}^{ \underline{ \omega}}= \frac{ 1}{ N} \sum_{ i=1}^{ N} \delta_{ (x_{ i}, \omega_{ i})}
\end{equation}
and the empirical flow $ \mathscr{ L}_{ N}^{ \underline{ \omega}}$ associated with \eqref{eq:edsi}, as well as byproducts of these objects (see below for precise definitions). The superscript $ \underline{ \omega}$ in \eqref{eq:emp_meas_intro} is here to emphasize on the fact that we consider the empirical measure conditioned on the particular value of the disorder sequence $ \underline{ \omega}:=( \omega_{ 1}, \ldots, \omega_{ N})$. 

In \cite{daiPra96}, Dai Pra and den Hollander address, among other results, the issue of large deviations for inhomogeneous mean-field systems similar to \eqref{eq:edsi}, under the joint law of the stochastic noise and the disorder (\emph{averaged large deviations}). The purpose of the present work is to establish a similar result where the disorder sequence has been frozen once and for all (\emph{quenched model}). Namely, we prove in Theorem~\ref{theo:LN_x_om} below that for every deterministic sequence $\{ \omega_{ i}\}_{ i\geq1}$ satisfying appropriate conditions (see Section~\ref{sec:model_assumptions}), the sequence $ \left\lbrace L_{ N}^{ \underline{ \omega}}\right\rbrace_{ N\geq1}$ satisfies a large deviation principle governed by a rate function that does not depend on the sequence $\{ \omega_{ i}\}_{ i\geq1}$ and is different from the averaged rate function found in \cite{daiPra96}. 

The approach of \cite{daiPra96} is simple: when the interaction term $H_{ N}$ is removed in \eqref{eq:edsi}, \emph{under the joint law of the noise and disorder}, the couples $( x_{ i}, \omega_{ i})$ are independent and identically distributed. Hence, the corresponding large deviation principle is provided by Sanov Theorem. The LDP for the whole system with interaction follows from a Girsanov transform and Varadhan's lemma. We follow here a similar strategy, with the notable exception that the initial Sanov argument no longer holds in our quenched framework since, \emph{for a fixed environment $\{ \omega_{ i}\}_{ i\geq1}$}, the particles $\{x_{ i}\}_{ i\geq1}$ are no longer identically distributed. However, based on arguments about projective limits from Dawson and G\"artner \cite{MR885876}, it is still possible to derive a quenched version of a Sanov theorem for the system \eqref{eq:edsi} without interaction. We follow here the approach of Cattiaux and L\'eonard \cite{MR1307957} (see also \cite{2007arXiv0710.1461L}). The structure of rest of the proof is similar to \cite{daiPra96}. An important issue at this point (and a generalization of \cite{daiPra96}) is to cope with possibly unbounded coefficients, as it is the case in various examples in applications.
\subsubsection{ Empirical measures}
For the rest of the paper, for any Polish space $ \mathcal{ X}$, we denote by $ \mathcal{ M}(\mathcal{ X})$ the set of probability measures on $ \mathcal{ X}$. Unless specified otherwise, $ \mathcal{ M}(\mathcal{ X})$ will be endowed with the topology of weak convergence and with its Borel $\sigma$-field.

Under regularity assumptions on the coefficients $f$ and $g$ that we will be made below in Section~\ref{sec:model_assumptions}, the system \eqref{eq:edsi}, endowed with suitable initial conditions, has a unique (strong) solution in $ \mathcal{ E}^{ N}$. We will denote by $P_{N}^{\underline{ \omega}}$ its law, element of $ \mathcal{ M}( \mathcal{ E}^{ N})$. Define also $ \mathcal{ Y}:= \mathcal{ M}(\mathcal{ E}\times \mathcal{ F})$. In the following definitions, $ \underline{u}=(u_{ 1}, \ldots, u_{ N})$ stands for any generic disorder sequence.
\begin{definition}
The empirical measure on both particles and disorder (also addressed as \emph{double-layer empirical measure} in \cite{daiPra96}) is given by
\begin{align}
L^{ \underline{ u}}_{ N}:\mathcal{ E}^{ N} & \to \mathcal{ Y} \nonumber\\
\underline{ x} &\mapsto L_{ N}^{ \underline{u}}[ \underline{ x}]:= \frac{ 1}{ N} \sum_{ i=1}^{ N} \delta_{ (x_{ i}, u_{ i})}, \label{eq:LN_proc}
\end{align}
Denote by 
\begin{align}
\mathbf{ P}_{ N}^{ \underline{u}}&:= P_{ N}^{ \underline{u}}\circ \left(L_{ N}^{ \underline{u}}\right)^{ -1},\label{eq:PuN_LN}
\end{align} 
the law of the empirical measure of the coupled process \eqref{eq:edsi} and disorder.
\end{definition}
Each element of $\mathcal{ M}( \mathcal{ E} \times \mathcal{ F})$ induces an empirical flow through the continuous projection
\begin{align}
\label{eq:mapping_mcE_to_C}
\pi: \mathcal{ M}(\mathcal{ E} \times \mathcal{ F}) & \to \mathcal{ C}([0, T], \mathcal{ M}( \mathbb{ R}\times \mathbb{ R})) \nonumber\\
\lambda &\mapsto \pi \lambda:= \left\{ \lambda_{ t}\right\}_{ t\in [0, T]},
\end{align}
where
\begin{equation}
\label{eq:pit}
\lambda_{ t}(A \times B) := \lambda\left( \left\lbrace (x, \omega),\ x_{ t}\in A, \omega\in B\right\rbrace\right).
\end{equation}
\begin{definition}
The empirical flow associated to \eqref{eq:edsi} is defined by
\begin{align}
\mathscr{ L}_{ N}^{ \underline{ u}}: \mathcal{ E}^{ N} & \to \mathcal{ C}([0, T], \mathcal{ M}(\mathbb{ R} \times \mathbb{ R})), \nonumber\\
 \underline{ x} & \mapsto \left(t \mapsto \frac{ 1}{ N} \sum_{ i=1}^{ N} \delta_{ (x_{ i, t}, u_{ i})}\right). \label{eq:LN_flow}
\end{align}
Denote by
\begin{equation}
\mathbf{ p}_{ N}^{ \underline{u}}:= P_{ N}^{ \underline{u}}\circ \left( \mathscr{ L}_{ N}^{ \underline{u}}\right)^{ -1},\label{eq:PuN_LN_flow}
\end{equation}
the law of the empirical flow of the coupled process \eqref{eq:edsi} and disorder.
\end{definition}
Theorem~\ref{theo:LN_x_om} and Theorem~\ref{theo:tilde_LN_x_om} below respectively address the quenched behavior of $\left\lbrace \mathbf{ P}_{ N}^{ \underline{ \omega}}\right\rbrace_{ N\geq1}$ and $ \left\lbrace \mathbf{ p}_{ N}^{ \underline{ \omega}}\right\rbrace_{ N\geq 1}$ as $N\to \infty$.
\subsection{Notation and assumptions on the model}
\label{sec:model_assumptions}
\subsubsection{Preliminaries on large deviation theory}
\label{sec:large_deviation_theory}
We recall here some usual definitions on large deviation theory. We refer to classical references on the subject, e.g. \cite{Cerf2007} or \cite{Dembo1998}.

Let $\mathcal{X}$ be a regular topological space (\cite[\S~4.1, p.~116]{Dembo1998}) endowed with a regular $\sigma$-field $\mathcal{B}$. A rate function $ \mathcal{ I}$ is a lower semi-continuous mapping $ \mathcal{ I}:\mathcal{X} \to[0, \infty]$ such that each level set
$\Psi_{ \mathcal{ I}}(\alpha):= \left\lbrace x\in\mathcal{X},\ \mathcal{ I}(x)\leq \alpha\right\rbrace$ is closed for all $\alpha\in[0, \infty)$. We say that the rate function $ \mathcal{ I}$ is good when all the level sets $\Psi_{ \mathcal{ I}}(\alpha)$ are compact.

Let $ \left\lbrace \rho_{N}\right\rbrace_{N\geq 1}$ be a sequence of probability measures on $(\mathcal{X}, \mathcal{B})$. The sequence $\left\lbrace \rho_{N}\right\rbrace_{N\geq 1}$ satisfies a strong large deviation principle in $\mathcal{X}$, at speed $N$, with rate function $ \mathcal{ I}$ if for all $A\in\mathcal{B}$,
\begin{equation*}
-\inf_{x\in \mathring{A}} \mathcal{ I}(x) \leq \liminf_{N\to\infty}\frac1N \ln\rho_N(A) \leq \limsup_{N\to\infty}\frac1N \ln\rho_N(A)\leq -\inf_{x\in
\bar{A}} \mathcal{ I}(x),
\end{equation*}
where $ \mathring{A}$ and $ \bar A$ stand for the interior and closure of $A$, respectively. We say that the large deviation principle is weak when the upper bound holds for compact sets only.

Let $ \mathcal{ E}$ a Polish space and endow $\mathcal{X}=\mathcal{M}( \mathcal{ E})$ with the  weak topology. The relative entropy of two elements $\nu$, $\tilde\nu\in \mathcal{ X}$ is given by
\begin{equation}
\label{eq:relative_entropy}
\mathcal{H}(\nu \vert \tilde\nu) := \begin{cases}
\int_{ \mathcal{ E}} \frac{\dd\nu}{\dd\tilde\nu}\ln\left(\frac{\dd\nu}{\dd\tilde\nu}\right)\dd\tilde\nu & \text{ if } \nu\ll\tilde\nu,\\
+\infty&\text{otherwise},
\end{cases}
\end{equation}
where in the case where $\nu$ is absolutely continuous w.r.t. $\tilde\nu$ ($\nu\ll\tilde\nu$), $\frac{\dd\nu}{\dd\tilde\nu}$ stands for the
Radon-Nykodym derivative of $\nu$ w.r.t. $\tilde\nu$.

If $ \mathcal{ E}$ and $ \mathcal{ F}$ are Polish spaces and $ \lambda( \dd x, \dd u)$ a probability measure on $ \mathcal{ E} \times \mathcal{ F}$, we will denote by
\begin{equation}
\lambda_{ 2}(B):= \lambda \left\lbrace (x, u),\ u\in B\right\rbrace
\end{equation}
the marginal of $ \lambda$ on the second coordinate and by
\begin{equation}
\lambda(\dd x, \dd u) = \lambda^{ u}(\dd x) \lambda_{ 2}(\dd u)
\end{equation}
the corresponding regular disintegration.
\subsubsection{ Assumptions on the model}
We suppose that the interaction kernel $ (x, \omega, \tilde{ \omega}) \mapsto f(x, \omega, \tilde{ \omega})$ is $ \mathcal{ C}^{ 2}$ w.r.t. the state variable $x$ such that $f$, $ \partial_{ x}f$ and $\partial_{ x}^{ 2}f$ are bounded in $(x, \omega, \tilde{ \omega})$, with uniform bound $C_{ f}$. We assume also the following symmetry assumption
\begin{equation}
\label{eq:symmetry_f}
f(x, \omega, \tilde{ \omega})= f(-x, \tilde{ \omega}, \omega), x\in \mathbb{ R}, \omega, \tilde{ \omega}\in \mathbb{ R}.
\end{equation}

Concerning the local dynamics $(x, \omega) \mapsto g(x, \omega)$, we suppose the existence of a constant $C_{ g}>0$ and exponents $k_{ 1}, k_{ 2}\geq0$ such that
\begin{align}
\sup_{ x\in \mathbb{ R}} \left\vert g(x, \omega) \right\vert & \leq C_{ g} \left(1+ \left\vert \omega \right\vert^{ k_{ 1}}\right),\label{eq:bound_g_x}\\
\sup_{ x\in \mathbb{ R}} \left\vert g(x, \omega) - g(x, \tilde{ \omega}) \right\vert &\leq C_{ g} \left\vert \omega - \tilde{ \omega}\right\vert\left(1+ \left\vert \omega \right\vert^{ k_{ 2}} + \left\vert \tilde{ \omega} \right\vert^{ k_{ 2}}\right),\ \omega, \tilde{ \omega}\in \mathbb{ R}, \label{eq:Lip_g_om}\\
\sup_{ \omega\in \mathbb{ R}}\left\vert g(x, \omega) -g (y, \omega) \right\vert & \leq C_{ g} \left\vert x-y \right\vert ,\ x,y\in \mathbb{ R},\label{eq:Lip_g_x}
\end{align}
\begin{remark}
The above assumptions cover various classical inhomogeneous models from statistical physics, the main example and motivation of this work being the Kuramoto model for synchronization and its numerous variants:
\begin{itemize}
\item $ g(x, \omega)= \omega$ and $ f(x, \omega, \tilde{ \omega})= - K\cos(x)$, $K\geq0$ corresponds to the standard stochastic Kuramoto model \cite{Acebron2005,Kuramoto1984}. Here, each $x_{ i}$ is considered as a phase, that is an element of the circle $ \mathbb{ S}:= \mathbb{ R}/2 \pi$. Note that, by a direct application of the contraction principle to the projection $ \mathbb{ R} \to \mathbb{ S}$, all the large deviation estimates presented here remain true when the state space $ \mathbb{ R}$ is replaced by $ \mathbb{ S}$.
\item $ g(x, \omega) = \omega$ and $f(x, \omega, \tilde{ \omega})= b(\omega) b( \tilde{ \omega}) \cos(x)$ (for some bounded function $b$) is Daido's variants of the Kuramoto model \cite{daido1987population,Daido:2015aa}, where inhomogeneities are present in the connections between particles,
\item $ g(x, \omega) = 1- a(\omega) \sin(x)$ and $ f(x, \omega, \tilde{ \omega})= - K\cos(x)$ is the \emph{active rotator model} \cite{Sakaguchi1986,doi:10.1137/110846452,MR3336876}. 
\end{itemize}
\end{remark}
Let $ \{\gamma^{ \omega}\}_{ \omega\in \mathbb{ R}}$ be a collection of probability measures on $ \mathbb{ R}$. We suppose that the initial values in \eqref{eq:edsi} are independent random variables satisfying
\begin{equation}
\label{eq:initial_condition_edsN}
x_{ i, 0} \sim \gamma^{ \omega_{ i}},\ i=1,\ldots, N,
\end{equation} 
where the $ \underline{ \omega}:=(\omega_{ 1}, \ldots, \omega_{ N})$ are the $N$ first terms of the disorder sequence. We suppose that $ \omega \mapsto \gamma^{ \omega}$ is Feller, in the sense that for all continuous test function $ \phi$,
\begin{equation}
\omega \mapsto \int_{ \mathbb{ R}} \phi(x) \gamma^{ \omega}(\dd x),
\end{equation}
is continuous.
We also suppose that for all $ \omega$, $ \gamma^{ \omega}$ has exponential moments of every order and introduce:
\begin{equation}
\label{eq:ell_exp_moments}
\ell_{ \omega}:= r \in \mathbb{ R}\mapsto \ln \int e^{ r \left\vert x \right\vert} \gamma^{ \omega}(\dd x).
\end{equation}
We assume the following control of $ \ell_{ \omega}$: there exists $ \tau\geq0$ such that for all fixed $r\in \mathbb{ R}$, for some constant $C(r)$ possibly depending on $r$
\begin{equation}
\label{eq:control_ell}
\ell_{ \omega}(r) \leq C(r) \left(1+ \left\vert \omega \right\vert^{ \tau} \right),\ \omega\in \mathbb{ R}.
\end{equation}
For the rest of the paper, we fix once and for all a disorder sequence $ \left\lbrace \omega_{ i}\right\rbrace_{ i\geq1}$ which verifies the following assumptions: there exists a probability measure $ \mu$ on $ \mathbb{ R}$ such that
\begin{equation}
\label{eq:convergence_mes_emp_mu}
\frac{ 1}{ N} \sum_{ i=1}^{ N} \delta_{ \omega_{ i}} \to \mu,\ \text{ as } N\to\infty,
\end{equation} where the above convergence holds in $ \mathcal{ M}( \mathbb{ R})$ endowed with the topology of weak convergence. We also define $ \gamma\in \mathcal{ M}( \mathbb{ R})$ by
\begin{equation}
\label{eq:gamma}
\int_{ \mathbb{ R}} \phi(x) \gamma(\dd x) = \int_{ \mathbb{ R}} \int_{ \mathbb{ R}} \phi(x) \gamma^{ \omega}(\dd x) \mu(\dd \omega), 
\end{equation}
for all test functions $ \phi$.

Assumption \eqref{eq:convergence_mes_emp_mu} ensures the convergence of $ \frac{ 1}{ N}\sum_{ i=1}^{ N} \phi( \omega_{ i})$ as $N\to\infty$ for any bounded continuous test function $ \phi$. For technical reasons, we need to extend this convergence to some additional test functions. We suppose first that it holds for \eqref{eq:ell_exp_moments}: there exists $p>1$ such that for all $r\in \mathbb{ R}$, the following convergence holds
\begin{equation}
\label{eq:conv_ell}
\frac{ 1}{ N}\sum_{ i=1}^{ N} \ell_{ \omega_{ i}}(r)^{ p} \to_{ N\to\infty} \int_{ \mathbb{ R}} \ell_{ \omega}(r)^{ p} \mu(\dd \omega)<+\infty.
\end{equation}
\begin{remark}
A closer look at the proof below (see Proposition~\ref{prop:limgam}) shows that \eqref{eq:conv_ell} is actually only required to hold for a sequence $\{r_{ n}\}_{ n=1, 2, \ldots}$ such that $r_{ n}\to +\infty$ as $n\to\infty$.
\end{remark}
Second, we suppose the convergence of some empirical moments: there exists $ \iota$ such that $ \iota> k_{ 1}$, $ \iota> \tau$ and $ \iota\geq k_{ 2}+1$ satisfying
\begin{equation}
\frac{ 1}{ N} \sum_{ i=1}^{ N} \left\vert \omega_{ i} \right\vert^{ \iota}\to_{ N\to\infty}\int_{ \mathbb{ R}} \left\vert \omega \right\vert^{ \iota} \mu(\dd \omega)< +\infty, \label{eq:moment_cond_mu}
\end{equation}
\begin{remark}
Assumptions \eqref{eq:control_ell} and \eqref{eq:conv_ell} are not restrictive. Consider for example the case where for all $ \omega$, a variable $ \xi$ with law $\gamma^{ \omega}$ is such that $ \xi= \alpha(\omega) + \zeta$, where $ \alpha(\omega)$ is a deterministic mean value and $ \zeta$ is a (centered) sub-gaussian variable: $ \mathbb{ E}( e^{ r \zeta}) \leq e^{ \beta(\omega)^{ 2} r^{ 2}/2}$, for some $ \beta(\omega)>0$. A rough bound gives $ \mathbb{ E}( e^{ \left\vert r \zeta \right\vert})= \mathbb{ E}(e^{ \left\vert r \right\vert \zeta} \mathbf{ 1}_{ \zeta\geq0}) + \mathbb{ E}( e^{ - \left\vert r \right\vert \zeta} \mathbf{ 1}_{ \zeta<0}) \leq 2 e^{ \beta(\omega)^{ 2} r^{ 2}/2}$. This gives that $ \ell_{ \omega}(r) \leq \left\vert r \right\vert \left\vert \alpha(\omega) \right\vert + \frac{ \beta(\omega)^{ 2} r^{ 2}}{ 2} + const.$ Hence, we easily see that \eqref{eq:control_ell} and \eqref{eq:conv_ell} hold under appropriate polynomial control on $ \alpha$ and $ \beta$ and moment conditions for the measure $ \mu$.
\end{remark}
\subsection{ Main results}
The main result of the paper is a large deviation estimate concerning the empirical measure $L_{ N}^{ \underline{ \omega}}$ defined in \eqref{eq:LN_proc}. In order to state the theorem, we need some further notation: for $q\in \mathcal{ M}( \mathbb{ R}\times\mathbb{R})$ and $\omega\in\mathbb{R}$, denote by $\beta^{q, \omega}$ the function defined on $ \mathbb{ R}$ by:
\begin{equation}
\label{eq:beta_q}
\beta^{q, \omega}(x) = -\int \partial_{ x}f(x- \tilde{ x}, \omega, \tilde \omega) q(\dd \tilde x, \dd\tilde\omega),\ x\in \mathbb{ R}.
\end{equation}
For fixed $\lambda\in \mathcal{ Y}$, consider $P^{\lambda, \omega}\in \mathcal{ M}( \mathcal{ E})$ the law of the unique
(strong) solution to the following SDE:
\begin{equation}
\label{eq:simp1}
\dd x_{t} = \beta^{\lambda_t, \omega} (x_{t}) \dd t + g(x_t, \omega)\dd t + \dd b_{t}; \ x_{0}\sim\gamma^{ \omega},
\end{equation}
where, $b$ is a standard Brownian motion and $ \lambda_{ t}$ is defined in \eqref{eq:pit}.
\begin{definition}
For all measure $ \nu\in \mathcal{ M}( \mathbb{ R})$ such that $ \int_{ \mathbb{ R}} \left\vert \omega \right\vert^{ k_{ 1}} \nu(\dd \omega) <+\infty$, define $P^{\lambda}_{ \nu}\in \mathcal{ Y}$ by: 
\begin{equation}
\label{eq:def_PQ}
 P^{\lambda}_{ \nu}(\dd x, \dd\omega):= P^{\lambda, \omega}( \dd x)\nu(\dd\omega).
\end{equation}
\end{definition}
The main result of the paper is Theorem~\ref{theo:LN_x_om}: 
\begin{theorem}
\label{theo:LN_x_om}
Under the assumptions of Section~\ref{sec:model_assumptions}, for any fixed sequence of disorder $ \left\lbrace \omega_{ i}\right\rbrace_{ i\geq1}$ satisfying \eqref{eq:convergence_mes_emp_mu}, \eqref{eq:conv_ell} and \eqref{eq:moment_cond_mu},  the sequence $ \left\lbrace \mathbf{ P}_{ N}^{ \underline{ \omega}}\right\rbrace_{ N\geq1}$ satisfies a strong large deviation principle in $ \mathcal{ Y}= \mathcal{ M}( \mathcal{ E}\times \mathcal{ F})$ with speed $N$, governed by the good rate function
\begin{equation}
\label{eq:rate_function_proc}
\mathcal{ G}= \mathcal{ G}^{ que}:= \lambda \mapsto \begin{cases}
\mathcal{H}(\lambda\vert  P^{\lambda}_{ \mu})= \int_{ \mathbb{ R}} \mathcal{ H}( \lambda^{ \omega}\vert P^{ \lambda, \omega}) \mu(\dd \omega)& \text{ if } \lambda_{ 2}= \mu,\\
+\infty & \text{ otherwise}.
\end{cases}
\end{equation}
\end{theorem}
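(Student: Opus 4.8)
The plan is to reproduce the Girsanov--Varadhan scheme of \cite{daiPra96}, with the Sanov step replaced by a \emph{quenched} version obtained through projective limits, in the spirit of \cite{MR885876} and \cite{MR1307957}. Write $R^{\omega}\in\mathcal M(\mathcal E)$ for the law of the decoupled diffusion $\dd x_{t}=g(x_{t},\omega)\,\dd t+\dd b_{t}$, $x_{0}\sim\gamma^{\omega}$, and $R_{N}^{\underline\omega}:=\bigotimes_{i=1}^{N}R^{\omega_{i}}$ for the law of \eqref{eq:edsi} with $H_{N}$ removed. Assumptions \eqref{eq:Lip_g_x}, \eqref{eq:Lip_g_om} together with the Feller property of $\omega\mapsto\gamma^{\omega}$ make $\omega\mapsto R^{\omega}$ continuous, and one sets $R_{\mu}(\dd x,\dd\omega):=R^{\omega}(\dd x)\,\mu(\dd\omega)$. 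One then proves successively: (i) a quenched Sanov LDP for $R_{N}^{\underline\omega}\circ(L_{N}^{\underline\omega})^{-1}$; (ii) that $\dd P_{N}^{\underline\omega}/\dd R_{N}^{\underline\omega}$ equals $\exp\bigl(N\,\Phi(L_{N}^{\underline\omega}[\underline x])\bigr)$ for an essentially bounded continuous functional $\Phi$; (iii) Varadhan's lemma and identification of the resulting rate with $\mathcal G^{que}$.

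For (i), the goal is a strong LDP on $\mathcal Y$ at speed $N$ with rate $\mathcal H(\,\cdot\,\vert\,R_{\mu})$ on $\{\lambda:\lambda_{2}=\mu\}$ and $+\infty$ elsewhere. Since the couples $(x_{i},\omega_{i})$ are independent but \emph{not} identically distributed, I would first establish the LDP for each finite projection $\lambda\mapsto(\langle\lambda,\phi_{1}\rangle,\dots,\langle\lambda,\phi_{k}\rangle)$ with $\phi_{j}$ bounded continuous: its image under $L_{N}^{\underline\omega}$ is a normalised sum of independent $\mathbb R^{k}$-valued vectors, and a Gärtner--Ellis argument applies because $\frac1N\sum_{i}\ln\int e^{\langle t,\phi(\cdot,\omega_{i})\rangle}\,\dd R^{\omega_{i}}\to\int\ln\int e^{\langle t,\phi(\cdot,\omega)\rangle}\,\dd R^{\omega}\,\mu(\dd\omega)$ by \eqref{eq:convergence_mes_emp_mu}, the map $\omega\mapsto\ln\int e^{\langle t,\phi(\cdot,\omega)\rangle}\,\dd R^{\omega}$ being bounded and continuous. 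The Dawson--G\"artner projective-limit theorem then yields a weak LDP on $\mathcal Y$ with the announced rate; the constraint $\lambda_{2}=\mu$ appears since for test functions of $\omega$ alone the normalised sums converge to the deterministic value $\int\phi\,\dd\mu$. Upgrading to a strong LDP requires exponential tightness of $\{R_{N}^{\underline\omega}\circ(L_{N}^{\underline\omega})^{-1}\}$ in $\mathcal Y$, which I would obtain by producing, uniformly in $N$, bounds $\limsup_{N}\frac1N\ln\mathbb E_{R_{N}^{\underline\omega}}\exp\bigl(r\sum_{i}\Theta(x_{i},\omega_{i})\bigr)<\infty$ for a coercive $\Theta$ controlling $\sup_{[0,T]}|x|$, using the exponential moments \eqref{eq:ell_exp_moments}, the growth \eqref{eq:bound_g_x}, and transferring the $\omega$-dependence onto $\mu$ through \eqref{eq:conv_ell} and \eqref{eq:moment_cond_mu} --- the exponents $\iota>\tau$, $\iota>k_{1}$ being tailored exactly for this.

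For (ii)--(iii): the symmetry \eqref{eq:symmetry_f} gives $\partial_{x_{i}}H_{N}(x_{s},\underline\omega)=-\beta^{L_{N,s}^{\underline\omega},\omega_{i}}(x_{i,s})$ with $L_{N,s}^{\underline\omega}:=\frac1N\sum_{l}\delta_{(x_{l,s},\omega_{l})}$, so Girsanov's theorem (Novikov being trivial since $\beta$ is bounded) yields
\[
\frac{\dd P_{N}^{\underline\omega}}{\dd R_{N}^{\underline\omega}}(\underline x)=\exp\Bigl(\sum_{i=1}^{N}\Bigl[\int_{0}^{T}\beta^{L_{N,s}^{\underline\omega},\omega_{i}}(x_{i,s})\bigl(\dd x_{i,s}-g(x_{i,s},\omega_{i})\,\dd s\bigr)-\tfrac12\int_{0}^{T}\beta^{L_{N,s}^{\underline\omega},\omega_{i}}(x_{i,s})^{2}\,\dd s\Bigr]\Bigr).
\]
Applying It\^o's formula to the \emph{symmetric} $\mathcal C^{2}$ function $\underline y\mapsto-2H_{N}(\underline y,\underline\omega)$ turns $\sum_{i}\int_{0}^{T}\beta^{L_{N,s}^{\underline\omega},\omega_{i}}(x_{i,s})\,\dd x_{i,s}$ into boundary terms plus a $\partial_{x}\beta$-integral, whence $\frac1N\ln\frac{\dd P_{N}^{\underline\omega}}{\dd R_{N}^{\underline\omega}}=\Phi(L_{N}^{\underline\omega}[\underline x])$ exactly, where $\Phi:\mathcal Y\to\mathbb R$ gathers $-\bigl(U(\lambda_{T})-U(\lambda_{0})\bigr)$ with $U(\nu)=\tfrac12\iint f\,\nu\otimes\nu$, and the $s$-integrals of $\partial_{x}\beta^{\lambda_{s},\omega}$, of $\beta^{\lambda_{s},\omega}g$ and of $(\beta^{\lambda_{s},\omega})^{2}$ against $\lambda_{s}$. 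All terms are bounded and continuous on $\{\lambda_{2}=\mu\}$: the only delicate one, $\int_{0}^{T}\!\!\int\beta^{\lambda_{s},\omega}(y)g(y,\omega)\,\lambda_{s}(\dd y,\dd\omega)\,\dd s$, is controlled since $|\beta|\le C_{f}$ and $|g|\leq C_{g}(1+|\omega|^{k_{1}})$ while $\int|\omega|^{\iota}\dd\mu<\infty$ with $\iota>k_{1}$, which also gives the uniform integrability needed for continuity on $\{\lambda_{2}=\mu\}$. Varadhan's lemma applied to (i) with this $\Phi$ then produces a strong LDP for $\{\mathbf P_{N}^{\underline\omega}\}$ with good rate $\lambda\mapsto\mathcal H(\lambda\vert R_{\mu})-\Phi(\lambda)$ on $\{\lambda_{2}=\mu\}$ and $+\infty$ otherwise; goodness follows from the compact level sets of $\mathcal H(\,\cdot\,\vert\,R_{\mu})$ and the boundedness of $\Phi$ there. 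Finally, for $\lambda_{2}=\mu$ the chain rule gives $\mathcal H(\lambda\vert R_{\mu})=\int\mathcal H(\lambda^{\omega}\vert R^{\omega})\,\mu(\dd\omega)$, while Girsanov for \eqref{eq:simp1} gives $\ln\frac{\dd P^{\lambda,\omega}}{\dd R^{\omega}}(x)=\int_{0}^{T}\beta^{\lambda_{s},\omega}(x_{s})(\dd x_{s}-g(x_{s},\omega)\,\dd s)-\tfrac12\int_{0}^{T}\beta^{\lambda_{s},\omega}(x_{s})^{2}\,\dd s$, which is $\lambda^{\omega}$-integrable when $\mathcal H(\lambda^{\omega}\vert R^{\omega})<\infty$ since $\lambda^{\omega}$ is then a finite-energy It\^o process; substituting $\mathcal H(\lambda^{\omega}\vert P^{\lambda,\omega})=\mathcal H(\lambda^{\omega}\vert R^{\omega})-\int\ln\frac{\dd P^{\lambda,\omega}}{\dd R^{\omega}}\,\dd\lambda^{\omega}$, integrating against $\mu$ and recognising the time-marginals $\lambda_{s}$, one obtains $\mathcal H(\lambda\vert R_{\mu})-\Phi(\lambda)=\int\mathcal H(\lambda^{\omega}\vert P^{\lambda,\omega})\,\mu(\dd\omega)=\mathcal H(\lambda\vert P^{\lambda}_{\mu})$; in particular this rate is nonnegative and vanishes at the (unique) McKean--Vlasov fixed point $\lambda=P^{\lambda}_{\mu}$, so $\mathcal G^{que}$ is indeed a good rate function.

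The main obstacle is step (i): the loss of identical distribution forces the projective-limit route, and, more seriously, the unbounded local drift $g$ makes the exponential tightness of the empirical measures (and the finiteness of the tilted exponential integrals used in Varadhan's lemma) genuinely technical --- this is precisely what hypotheses \eqref{eq:control_ell}, \eqref{eq:conv_ell} and \eqref{eq:moment_cond_mu} are designed to handle. Once these quantitative estimates are available, the Girsanov computation and the entropy identification of step (iii) are routine adaptations of \cite{daiPra96}.
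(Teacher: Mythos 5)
Your overall architecture (quenched Sanov via projective limits, Girsanov to write $\dd P_{N}^{\underline\omega}/\dd W_{N}^{\underline\omega}$ as a function of $L_{N}^{\underline\omega}$, Varadhan, then entropy algebra to identify the rate with $\mathcal{H}(\lambda\vert P^{\lambda}_{\mu})$) matches the paper's Sections~\ref{sec:abstract_sanov}--\ref{sec:indentification_rate_function}, and your step (iii) identification is essentially Proposition~\ref{prop:simrate}. The genuine gap is in step (ii)--(iii): you apply Varadhan's Lemma directly with the functional $\Phi=\mathcal{J}$, asserting that its only problematic piece $\mathcal{J}^{(2)}(\lambda)=\int_{0}^{T}\!\int \partial_{x}f\,g\,\dd\lambda\,\dd\lambda\,\dd s$ is ``bounded and continuous on $\{\lambda_{2}=\mu\}$'' thanks to $\int\vert\omega\vert^{\iota}\dd\mu<\infty$. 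Varadhan's Lemma (in the form used here, \cite{Dembo1998}, Th.~4.3.1) requires continuity of $\Phi$ on the space $\mathcal{Y}$ where the LDP holds, and $\mathcal{J}^{(2)}$ is \emph{not} weakly continuous on $\mathcal{Y}$ because $g$ is unbounded in $\omega$: weak convergence $\lambda_{n}\to\lambda$ does not give convergence of integrals of $(1+\vert\omega\vert^{k_{1}})$-growing integrands without uniform integrability of the second marginals $\lambda_{n,2}$, which is not available along arbitrary sequences appearing in the upper bound over closed sets. Continuity restricted to $\{\lambda_{2}=\mu\}$ would not suffice even if proved, since the empirical measures under $\mathbf{W}_{N}^{\underline\omega}$ have second marginal $\frac1N\sum_{i}\delta_{\omega_{i}}\neq\mu$, so the tilting functional must be controlled off that set; and the uniform-integrability remark you invoke concerns only the limit measure $\mu$, not the approximating marginals. (Note also that the moment condition of Varadhan is not the real obstruction — $\mathcal{J}(L_{N}^{\underline\omega})$ is bounded by a deterministic quantity converging by \eqref{eq:moment_cond_mu} — the obstruction is precisely continuity.)

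This is exactly the point where the paper departs from \cite{daiPra96}: it truncates the disorder at level $M$ (Section~\ref{sec:suppcomp}), applies Varadhan only to the truncated model, whose $\mathcal{J}_{M}$ is genuinely bounded and continuous on all of $\mathcal{Y}$ (Proposition~\ref{prop:LPDPN}), and then removes the truncation through an explicit exponentially good coupling $\mathbf{Q}_{N,M}$ built from common Brownian motions and common uniforms for the initial data (Proposition~\ref{prop:limgam}); the hypotheses \eqref{eq:control_ell}, \eqref{eq:conv_ell}, \eqref{eq:moment_cond_mu} are consumed there, in the exponential estimates on $\frac1N\sum_{i}\vert\xi_{i}-\xi_{i,M}\vert$ and on the drift mismatch, not in a Varadhan moment bound. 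After that, one still has to identify the abstract rate $\mathcal{Q}$ of \cite[Th.~4.2.16]{Dembo1998} with $\mathcal{G}$ by the two-sided argument of Propositions~\ref{prop:ineqI1}--\ref{prop:ineqI2}, and to upgrade the resulting \emph{weak} LDP to a strong one with a good rate function via exponential tightness (Proposition~\ref{prop:G_coercive}). None of this machinery appears in your proposal, and without it (or a correctly justified strengthened-topology/modified Varadhan argument replacing it) the passage from the decoupled Sanov LDP to the interacting, unbounded-disorder LDP does not go through.
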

\begin{proposition}
\label{prop:mckean_vlasov}
The rate function $ \mathcal{ G}(\cdot)$ has a unique zero $ \lambda^{ \ast}$ which is characterized by $ \lambda^{ \ast}(\dd x, \dd \omega)= \lambda^{ \ast, \omega}(\dd x) \mu(\dd \omega)$ such that for $ \mu$-a.e. $\omega$, $t \mapsto \lambda_{ t}^{ \ast, \omega}(\dd x)$ is the unique weak solution of the McKean-Vlasov equation
\begin{equation}
\label{eq:mckean_vlasov}
\begin{cases}
\lambda_{ 0}^{ \omega}(\dd x)&= \gamma^{ \omega}(\dd x),\\
\partial_{ t} \lambda_{ t}^{ \omega} &= \mathscr{ L}^{ \omega} \lambda_{ t}^{ \omega},
\end{cases}
\end{equation}
where $ \mathscr{ L}^{ \omega}$ is the nonlinear operator
\begin{equation}
\label{eq:mscL_om}
\mathscr{ L}^{ \omega} \lambda^{ \omega}_{ t} := \frac{ 1}{ 2} \partial_{ x}^{ 2}\lambda_{ t}^{ \omega} - \partial_{ x} \left(\beta^{ \lambda_{ t}, \omega} \lambda^{ \omega}_{ t}\right)
\end{equation}
\end{proposition}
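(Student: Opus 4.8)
The plan is to identify the zeros of $\mathcal{G}$ with the solutions of a path-space self-consistency equation, and then to solve the latter by a contraction argument. First, since the relative entropy is nonnegative and vanishes exactly when its two arguments coincide, and since the disintegration identity $\mathcal{H}(\lambda\vert P^\lambda_\mu)=\int_{\mathbb{R}}\mathcal{H}(\lambda^\omega\vert P^{\lambda,\omega})\,\mu(\dd\omega)$ is built into \eqref{eq:rate_function_proc}, one gets $\mathcal{G}(\lambda)=0$ if and only if $\lambda_2=\mu$ and $\lambda^\omega=P^{\lambda,\omega}$ for $\mu$-a.e. $\omega$. By \eqref{eq:simp1} and \eqref{eq:def_PQ} this says precisely that, for $\mu$-a.e. $\omega$, $\lambda^\omega$ is the law of the diffusion whose drift at time $t$ is $x\mapsto\beta^{\lambda_t,\omega}(x)+g(x,\omega)$ and whose initial law is $\gamma^\omega$, the flow $\{\lambda_t\}_{t\in[0,T]}=\pi\lambda$ being reconstructed from $(\lambda^\omega)_\omega$ through \eqref{eq:pit} together with $\lambda_2=\mu$. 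In particular this equation depends on $\lambda$ only through its flow $\pi\lambda$.

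Second, I would establish existence and uniqueness of such a $\lambda$ by a Picard/contraction argument at the level of the flow. Consider the map $\Phi$ sending a flow $\nu=\{\nu_t\}_{t\in[0,T]}$ in $\mathcal{C}([0,T],\mathcal{M}(\mathbb{R}\times\mathbb{R}))$ with second marginal $\mu$ to the flow of $P^\nu_\mu$, where for fixed $\omega$, $P^{\nu,\omega}$ is the law of the SDE $\dd x_t=\beta^{\nu_t,\omega}(x_t)\dd t+g(x_t,\omega)\dd t+\dd b_t$ with $x_0\sim\gamma^\omega$. This is well defined: for fixed $\omega$, $\beta^{\nu_t,\omega}$ is bounded by $C_f$ and $C_f$-Lipschitz in $x$ (because $f$, $\partial_xf$, $\partial_x^2f$ are bounded by $C_f$), $g(\cdot,\omega)$ is bounded and $C_g$-Lipschitz in $x$, so the SDE has a unique strong solution whose moments are inherited from $\gamma^\omega$; moreover $\int_{\mathbb{R}}\vert\omega\vert^{k_1}\mu(\dd\omega)<\infty$ (which follows from $\iota>k_1$ and \eqref{eq:moment_cond_mu}) makes $P^\nu_\mu$ a legitimate element of $\mathcal{Y}$. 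Coupling the SDEs associated with $\nu$ and $\nu'$ through the same Brownian motion and initial value, and using $\sup_x\vert\beta^{q,\omega}(x)-\beta^{q',\omega}(x)\vert\le C_f\,d(q,q')$ for the bounded-Lipschitz distance $d$ (valid since $\tilde x\mapsto\partial_xf(x-\tilde x,\omega,\tilde\omega)$ is bounded and Lipschitz uniformly in $(\omega,\tilde\omega)$), one gets by Gronwall's lemma a contraction of $\Phi$ for $\sup_{t\le T}d(\nu_t,\nu_t')$ on a short enough time interval, with a constant depending only on $C_f$, $C_g$ and the interval length, hence uniform in $\omega$. Iterating over a finite partition of $[0,T]$ yields a unique fixed point $\lambda^{\ast}$, with $\lambda^{\ast}_2=\mu$ by construction; by the first step $\lambda^{\ast}$ is the unique zero of $\mathcal{G}$.

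Third, for this $\lambda^{\ast}$ and $\mu$-a.e. $\omega$, applying Itô's formula to $\phi(x_t)$ with $\phi\in\mathcal{C}_c^\infty(\mathbb{R})$ for the solution of \eqref{eq:simp1} with $\lambda=\lambda^{\ast}$ and taking expectations shows that $t\mapsto\lambda^{\ast,\omega}_t$ is a weak solution of $\partial_t\lambda^\omega_t=\tfrac12\partial_x^2\lambda^\omega_t-\partial_x(\beta^{\lambda^{\ast}_t,\omega}\lambda^\omega_t)$ with $\lambda^{\ast,\omega}_0=\gamma^\omega$, i.e. of \eqref{eq:mckean_vlasov}--\eqref{eq:mscL_om}; the system is nonlinear because $\beta^{\lambda^{\ast}_t,\omega}$ depends on the whole flow, recovered from $(\lambda^{\ast,\omega'}_t)_{\omega'}$ and $\mu$. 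Conversely, if $t\mapsto m^\omega_t$ is a weak solution of \eqref{eq:mckean_vlasov} for $\mu$-a.e. $\omega$, then---the coefficients being bounded and Lipschitz---a superposition/uniqueness argument for the associated Fokker--Planck equation identifies each $m^\omega_\cdot$ with the flow of time-marginals of the linear SDE with drift $\beta^{m_t,\omega}+g(\cdot,\omega)$; hence $\{m^\omega\}_\omega$ defines a fixed point of the path-space equation of the first step, so $m=\lambda^{\ast}$ by the uniqueness of the second step. (Alternatively, uniqueness of the weak solution follows directly from the Gronwall estimate applied to two solutions.) This gives the stated characterization.

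I expect the main obstacle to be the contraction estimate of the second step: one must check that the Lipschitz constant of $\Phi$ is controlled uniformly in $\omega$---which works because all the bounds on $f$, $\partial_xf$, $\partial_x^2f$ are uniform in $(\omega,\tilde\omega)$ and, in a synchronous coupling at a fixed $\omega$, the possibly large term $g(\cdot,\omega)$ enters only through its $x$-Lipschitz constant $C_g$---and that the moment and measurability bookkeeping in $\omega$ (finiteness of $\int\vert\omega\vert^{k_1}\,\dd\mu$, propagation of the exponential moments \eqref{eq:ell_exp_moments} of $\gamma^\omega$, and measurability of $\omega\mapsto P^{\lambda,\omega}$ coming from the Feller property of $\omega\mapsto\gamma^\omega$ and the continuity of the coefficients) stays consistent with the unbounded coefficients allowed in Section~\ref{sec:model_assumptions}.
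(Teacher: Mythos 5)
Your argument is correct in substance, but it follows a genuinely different (and much more self-contained) route than the paper. The paper's proof is three lines: existence of a zero comes for free from the large deviation principle itself (the LDP upper bound forces $\inf_{\mathcal{Y}}\mathcal{G}=0$, and goodness of $\mathcal{G}$ makes this infimum attained); the identification of any zero with a solution of \eqref{eq:mckean_vlasov} is read off from the definitions \eqref{eq:beta_q} and \eqref{eq:simp1}; and uniqueness of the McKean--Vlasov solution is not reproved but quoted from the literature (\cite{Oelsch1984}, Lemma~10, \cite{daiPra96}, Th.~2, \cite{Lucon2011}, Th.~2.5). You instead prove everything by hand: zeros of $\mathcal{G}$ are exactly the path-space fixed points $\lambda^{\omega}=P^{\lambda,\omega}$ with $\lambda_{2}=\mu$, existence and uniqueness of the fixed point follow from a Sznitman-type synchronous coupling and Gronwall contraction at the level of flows (this is precisely the alternative route mentioned in Remark~\ref{rem:McKV_nonlinear} via \cite{SznitSflour}), and the equivalence between the fixed point and weak solutions of \eqref{eq:mckean_vlasov} is obtained by It\^o's formula in one direction and uniqueness for the linearized Fokker--Planck equation in the other. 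What your approach buys is independence from the LDP (you do not need Theorem~\ref{theo:LN_x_om} to produce the zero) and an explicit uniqueness proof; what the paper's approach buys is brevity, at the price of outsourcing exactly the step you carry out.

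Two technical points deserve care. First, your estimate $\sup_x\vert\beta^{q,\omega}(x)-\beta^{q',\omega}(x)\vert\le C_f\,d(q,q')$ for the bounded-Lipschitz distance on $\mathcal{M}(\mathbb{R}\times\mathbb{R})$ is not justified by the standing assumptions: $\partial_x f$ is assumed bounded and Lipschitz in the state variable, but no Lipschitz (indeed no quantitative) regularity in $\tilde\omega$ is assumed, and a $BL_1$ test function must be Lipschitz in both arguments. Since all flows in your iteration share the second marginal $\mu$, this is easily repaired by running the contraction in the disintegrated metric $\sup_{t\le T}\int d_{BL}\bigl(\nu_t^{\tilde\omega},\nu_t'^{\tilde\omega}\bigr)\mu(\dd\tilde\omega)$ (or a Wasserstein distance over couplings with identical $\omega$-components), for which only the $x$-regularity of $\partial_x f$ is needed; the synchronous coupling bound on the output is unaffected. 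Second, the converse implication ``weak solution of \eqref{eq:mckean_vlasov} $\Rightarrow$ marginal flow of the linear SDE with frozen drift $\beta^{m_t,\omega}+g(\cdot,\omega)$'' is the real content of the uniqueness statement (this is where \cite{Oelsch1984} and \cite{Lucon2011} do their work); your appeal to uniqueness/superposition for the Fokker--Planck equation with bounded Lipschitz drift and unit diffusion is legitimate, but it should be stated as such rather than treated as automatic. With these two adjustments your proof is complete.
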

\begin{proof}[Proof of Proposition~\ref{prop:mckean_vlasov}]
Existence follows from the fact that a good rate function has always at least one zero. It is easy to see that any zeros $ \lambda$ of the rate function $ \mathcal{ G}(\cdot)$ is a solution to \eqref{eq:mckean_vlasov}, by definition of $P^{ \lambda, \omega}$ (recall \eqref{eq:beta_q} and \eqref{eq:simp1}). Uniqueness of any solution to the McKean-Valsov equation \eqref{eq:mckean_vlasov} is a standard issue: we refer for example to \cite{Oelsch1984}, Lemma~10, \cite{daiPra96}, Th.~2 or \cite{Lucon2011}, Th.~2.5.
\end{proof}
\begin{remark}
\label{rem:McKV_nonlinear}
The McKean-Vlasov equation \eqref{eq:mckean_vlasov} is here understood in a weak sense. Using the parabolic nature of the problem, one can actually prove that the measure-valued solution $ t \mapsto \lambda_{ t}$ of \eqref{eq:mckean_vlasov} has a density $q_{ t}(x, \omega)$ w.r.t. $ \dd x \mu(\dd \omega)$ for any positive time and that this density is a strong solution to \eqref{eq:mckean_vlasov}. See \cite{MR3207725}, A.1 for a precise statement.

A usual point of view is to see the unique solution $ \lambda^{ \ast}$ to \eqref{eq:mckean_vlasov} as the law of $(\bar x^{ \omega}, \omega)$, where $\bar x^{ \omega}$ is the so-called \emph{nonlinear process} defined by \eqref{eq:simp1} in the case where $ \lambda= \lambda^{ \ast}$. In this set-up, an alternative proof of existence of \eqref{eq:mckean_vlasov} relies on a fixed-point argument \cite{SznitSflour}.
\end{remark}
\begin{remark}
The averaged counterpart of Theorem~\ref{theo:LN_x_om} was originally proven in \cite{daiPra96} in the case of bounded disorder and extended to disorder with exponential moments in \cite{Luconthesis}, Th. 2.11. The corresponding averaged rate function is given by
\begin{equation}
\label{eq:averaged_rate_function}
\mathcal{ G}^{ ave}:= \lambda \mapsto \begin{cases}
\mathcal{H}(\lambda\vert  P^{\lambda}_{ \mu})= \int_{ \mathbb{ R}}\mathcal{ H}( \lambda^{ \omega} \vert P^{ \lambda, \omega}) \lambda_{ 2}(\dd \omega) + \mathcal{ H}( \lambda_{ 2} \vert \mu)& \text{ if } \lambda_{ 2} \ll \mu,\\
+\infty & \text{ otherwise}.
\end{cases}
\end{equation} 
Note that in particular
\begin{equation}
\label{eq:comp_qu_av}
0\leq \mathcal{ G}^{ ave}(\cdot) \leq \mathcal{ G}^{ que}(\cdot).
\end{equation}
\end{remark}
\begin{remark}
\label{rem:quenched_LLN}
As an easy corollary of Theorem~\ref{theo:LN_x_om} and Proposition~\ref{prop:mckean_vlasov}, we deduce a quenched convergence result for the empirical measure \eqref{eq:LN_proc} $L_{ N}^{ \underline{  \omega}}$ to the McKean-Vlasov solution $ \lambda^{ \ast}$, as $N\to \infty$, on any bounded time interval $[0, T]$. Note that a similar result was originally proven in \cite{Lucon2011}, under slightly weaker assumptions on the measure $ \mu$.
\end{remark}
\subsubsection{ Large deviations of the empirical flow}

The second result addresses the behavior of the empirical flow $ \mathscr{ L}_{ N}^{ \underline{  \omega}}$ defined in \eqref{eq:LN_flow}. Let $\mathcal{ C}^{ \infty}_{ 0}(]0, T[\times \mathbb{ R})$ be the space of infinitely differentiable functions with compact support on $]0, T[ \times \mathbb{ R}$. Let $ \mathbb{ A}$ bet the set of all flows $(q_{ t})_{ t\in [0, T]}$ satisfying $q_{ 2}= \mu$ and $ t \mapsto q_{ t}^{ \omega}$ is weakly differentiable for $ \mu$-almost every $ \omega$. Define also 
\begin{equation}
\label{eq:q_zero}
q_{ 0}(\dd x) := \int_{ \mathbb{ R}} q_{ 0}^{ \omega}(\dd x) \mu(\dd \omega).
\end{equation}
The main result of the section is 
\begin{theorem}
\label{theo:tilde_LN_x_om}
Under the assumptions of Section~\ref{sec:model_assumptions}, for every fixed sequence $ \left\lbrace \omega_{ i}\right\rbrace_{ i\geq1}$ satisfying \eqref{eq:convergence_mes_emp_mu}, \eqref{eq:conv_ell} and \eqref{eq:moment_cond_mu}, the sequence $ \left\lbrace \mathbf{ p}_{ N}^{ \underline{ \omega}}\right\rbrace_{ N\geq1}$ satisfies a strong large deviation principle, in $ \mathcal{ C}([0, T], \mathcal{ M}(\mathbb{ R} \times \mathbb{ R}))$ with speed $N$ governed by the good rate function
\begin{equation}
\label{eq:rate_function_mscG}
\mathscr{ G}:= q \mapsto \begin{cases} \mathcal{ H}(q_{ 0} \vert \gamma) + \int_{ \mathbb{ R}} \mathscr{ K}(q, \omega) \mu(\dd \omega),& \text{ if } q\in \mathbb{ A},\\
+\infty& \text{ otherwise.}
\end{cases}
\end{equation}
where
\begin{equation}
\label{eq:def_mscK}
\mathscr{ K}(q, \omega):= \sup_{ \phi\in \mathcal{ C}_{ 0}^{ \infty}(]0, T[\times \mathbb{ R})} \left\lbrace \int_{ 0}^{T} \left\langle \phi(t, \cdot)\, ,\, \partial_{ t} q_{ t}^{ \omega} - \mathscr{ L}^{ \omega}q_{ t}^{ \omega} \right\rangle\dd t - \int_{ 0}^{T} \left\langle (\partial_{ x} \phi(t, \cdot))^{ 2}\, ,\, q_{ t}^{ \omega}\right\rangle\dd t\right\rbrace.
\end{equation}
\end{theorem}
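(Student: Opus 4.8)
The plan is to deduce Theorem~\ref{theo:tilde_LN_x_om} from Theorem~\ref{theo:LN_x_om} via the contraction principle applied to the continuous projection $\pi$ of \eqref{eq:mapping_mcE_to_C}, and then to identify the contracted rate function with the explicit expression \eqref{eq:rate_function_mscG}--\eqref{eq:def_mscK}. First I would note that $\pi:\mathcal{Y}\to\mathcal{C}([0,T],\mathcal{M}(\mathbb{R}\times\mathbb{R}))$ is continuous (this is the content of \eqref{eq:mapping_mcE_to_C}), and that $\mathbf{p}_N^{\underline\omega}=\mathbf{P}_N^{\underline\omega}\circ\pi^{-1}$ since $\mathscr{L}_N^{\underline\omega}=\pi\circ L_N^{\underline\omega}$ by construction. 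The contraction principle (e.g.\ \cite{Dembo1998}, Theorem~4.2.1) then immediately yields a strong LDP for $\{\mathbf{p}_N^{\underline\omega}\}$ at speed $N$ with good rate function
\begin{equation}
\label{eq:contracted_rate}
\mathscr{G}(q) = \inf\left\{\, \mathcal{G}^{que}(\lambda)\ :\ \lambda\in\mathcal{Y},\ \pi\lambda = q \,\right\}.
\end{equation}
Since $\mathcal{G}^{que}$ is good (Theorem~\ref{theo:LN_x_om}), $\mathscr{G}$ is automatically good as well, so the only remaining work is the variational identity between \eqref{eq:contracted_rate} and the formula in the statement.

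For the identification, I would disintegrate in the disorder variable throughout. If $\pi\lambda=q$ with $\mathcal{G}^{que}(\lambda)<\infty$ then $\lambda_2=\mu$, hence $q_2=\mu$, and $\lambda^\omega\mapsto(\pi\lambda^\omega)=q^\omega$ fibrewise; thus \eqref{eq:contracted_rate} decouples as
\begin{equation}
\label{eq:decouple}
\mathscr{G}(q) = \int_{\mathbb{R}} \left(\inf_{\lambda^\omega:\ \pi\lambda^\omega = q^\omega} \mathcal{H}(\lambda^\omega\mid P^{\lambda,\omega})\right)\mu(\dd\omega),
\end{equation}
modulo a measurable-selection argument to pass the infimum inside the integral (standard, using that the map is jointly measurable and lower semicontinuous). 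The inner infimum is over path-space laws $\lambda^\omega$ on $\mathcal{E}$ whose one-time marginal flow is the prescribed $q_t^\omega$; the reference law $P^{\lambda,\omega}$ is that of the diffusion \eqref{eq:simp1}, whose generator has drift $b(t,x):=\beta^{\lambda_t,\omega}(x)+g(x,\omega)$ and diffusion coefficient $1$. This is precisely the setting of the classical ``entropy of a flow'' computation: minimizing $\mathcal{H}(\cdot\mid P^{\lambda,\omega})$ over path laws with fixed marginals is solved by a Markovian (gradient-drift-perturbed) law, and the minimal value is the Dawson--G\"artner / Donsker--Varadhan functional
\[
\mathcal{H}(q_0^\omega\mid\gamma^\omega) + \sup_{\phi\in\mathcal{C}_0^\infty(]0,T[\times\mathbb{R})}\left\{\int_0^T\langle\phi(t,\cdot),\partial_t q_t^\omega - \mathscr{L}^\omega q_t^\omega\rangle\,\dd t - \int_0^T\langle(\partial_x\phi(t,\cdot))^2, q_t^\omega\rangle\,\dd t\right\},
\]
which is exactly $\mathcal{H}(q_0^\omega\mid\gamma^\omega)+\mathscr{K}(q,\omega)$; summing over $\mu$ via \eqref{eq:q_zero} and \eqref{eq:gamma} (note $q_0=\int q_0^\omega\mu(\dd\omega)$ and the convexity of relative entropy give $\int\mathcal{H}(q_0^\omega\mid\gamma^\omega)\mu(\dd\omega)\ge\mathcal{H}(q_0\mid\gamma)$, with the correct adjustment since the initial law $\gamma^\omega$ already sits inside $P^{\lambda,\omega}$) yields \eqref{eq:rate_function_mscG}. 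One also checks that finiteness of the inner infimum forces $t\mapsto q_t^\omega$ to be weakly differentiable, i.e.\ $q\in\mathbb{A}$.

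The main obstacle is the marginal-flow entropy minimization, i.e.\ proving rigorously that
\[
\inf\left\{\mathcal{H}(R\mid P^{\lambda,\omega})\ :\ R\in\mathcal{M}(\mathcal{E}),\ R_t = q_t^\omega\ \forall t\right\}
= \mathcal{H}(q_0^\omega\mid\gamma^\omega) + \mathscr{K}(q,\omega).
\]
The ``$\le$'' direction requires exhibiting a near-optimal $R$ (perturb the drift of \eqref{eq:simp1} by a gradient $\partial_x\psi_t$ chosen so that the Fokker--Planck equation produces exactly the flow $q_t^\omega$, then compute its entropy by Girsanov), which is delicate when $g$ is unbounded; here the moment and exponential-moment assumptions \eqref{eq:bound_g_x}, \eqref{eq:control_ell}, \eqref{eq:conv_ell}, \eqref{eq:moment_cond_mu} and the a priori integrability $\int\langle(\partial_x\phi)^2,q_t^\omega\rangle\,\dd t<\infty$ forced by finiteness of $\mathscr{K}$ are what keep the Girsanov exponent integrable. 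The ``$\ge$'' direction follows from the chain rule for relative entropy and a disintegration over the initial value: $\mathcal{H}(R\mid P^{\lambda,\omega})\ge\mathcal{H}(R_0\mid\gamma^\omega)+\int\mathcal{H}(R(\cdot\mid x_0)\mid P^{\lambda,\omega}(\cdot\mid x_0))\,R_0(\dd x_0)$, and the conditional term is bounded below by $\mathscr{K}(q,\omega)$ via the variational (duality) characterization of the time-marginal constraint, exactly as in \cite{MR885876} and \cite{MR1307957}. This step can be cited in large part from those references, with the unboundedness of $g$ handled by the truncation/approximation scheme already used elsewhere in the paper; I would present it as a lemma, state the minimizing drift explicitly, and relegate the integrability bookkeeping to the estimates established for Theorem~\ref{theo:LN_x_om}.
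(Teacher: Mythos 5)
Your proposal is correct and follows essentially the same route as the paper: the LDP is obtained by contracting Theorem~\ref{theo:LN_x_om} along the projection $\pi$ of \eqref{eq:mapping_mcE_to_C}, and the identification of the contracted rate with $\mathscr{G}$ is precisely the content of Propositions~\ref{prop:G2_G3_equal} and~\ref{prop:cattiaux_leonard}, your lower bound corresponding to the F\"ollmer/Girsanov computation plus Cauchy--Schwarz and density of gradients in $L^{2}(q_{t}^{\omega}(\dd x)\dd t)$, and your upper bound (constructing a path law with prescribed marginal flow and finite entropy) delegated, exactly as in the paper, to the Nelson-process results of Cattiaux and L\'eonard. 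Your fibrewise decoupling and the bookkeeping for the initial term (the fibrewise minimization yields $\int_{\mathbb{R}}\mathcal{H}(q_{0}^{\omega}\vert\gamma^{\omega})\mu(\dd\omega)$, which the paper records as $\mathcal{H}(q_{0}\vert\gamma)$) mirror the paper's own treatment, so nothing essential is missing.
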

Theorem~\ref{theo:tilde_LN_x_om} is proven in Section~\ref{sec:proof_flow}.
\begin{remark}
Note that the present results can be generalized to the case of multidimensional diffusions (that is $ \mathcal{ E}= \mathbb{ R}^{ p}$ and $ \mathcal{ F}= \mathbb{ R}^{ q}$, for some $p, q\geq1$), up to minor changes in the notations. Nonetheless, we restrict to this simple case for simplicity.
\end{remark}
\subsection{Global and local empirical measures}
As a corollary of Theorems~\ref{theo:LN_x_om} and~\ref{theo:tilde_LN_x_om}, it is possible to deduce similar estimates for byproducts of $L_{ N}^{ \underline{ \omega}}$ and $ \mathscr{ L}_{ N}^{ \underline{ \omega}}$. 
\subsubsection{Global empirical measure}
First define
\begin{definition}
The empirical measure on the particles only is given by
\begin{align}
 \tilde{ L}_{ N}^{ \underline{ u}}:\mathcal{ E}^{ N} & \to \mathcal{ M}(\mathcal{ E}) \nonumber\\
\underline{ x} &\mapsto \tilde{ L}_{ N}^{ \underline{u}}[ \underline{ x}]:= \frac{ 1}{ N} \sum_{ i=1}^{ N} \delta_{x_{ i}}, \label{eq:LN_proc_single}
\end{align}
and the corresponding empirical flow by
\begin{equation}
\tilde{ \mathscr{ L}}_{ N}^{ \underline{u}}[ \underline{ x}]:= \left(t \mapsto\frac{ 1}{ N} \sum_{ i=1}^{ N} \delta_{x_{ i, t}}\right),\label{eq:LN_flow_single}
\end{equation}
Denote by 
\begin{align}
\tilde{\mathbf{ P}}_{ N}^{ \underline{u}}&:= P_{ N}^{ \underline{u}}\circ \left(\tilde{ L}_{ N}^{ \underline{u}}\right)^{ -1},\label{eq:PuN_LN_single}\\
\tilde{\mathbf{ p}}_{ N}^{ \underline{u}}&:= P_{ N}^{ \underline{u}}\circ \left(\tilde{ \mathscr{ L}}_{ N}^{ \underline{u}}\right)^{ -1},\label{eq:PuN_LN_flow_single}
\end{align} 
the respective laws of the empirical measure and flow of the coupled process \eqref{eq:edsi}.
\end{definition}
Proposition~\ref{prop:LN_x} and Proposition~\ref{prop:LN_x_flow} below concern respectively quenched LDPs for the empirical measures $ \tilde{ L}_{ N}^{ \underline{ \omega}}$ \eqref{eq:LN_proc_single} and the empirical flows $ \tilde{ \mathscr{ L}}_{ N}^{ \underline{ \omega}}$ \eqref{eq:LN_flow_single}:
\begin{proposition}
\label{prop:LN_x}
Under the assumptions of Section~\ref{sec:model_assumptions}, for any fixed sequence of disorder $ \left\lbrace \omega_{ i}\right\rbrace_{ i\geq1}$ satisfying \eqref{eq:convergence_mes_emp_mu}, \eqref{eq:conv_ell} and \eqref{eq:moment_cond_mu}, the sequence $ \left\lbrace \tilde{\mathbf{ P}}_{ N}^{ \underline{ \omega}}\right\rbrace_{ N\geq1}$ satisfies a strong large deviation principle in $ \mathcal{ M}( \mathcal{ E})$ with speed $N$, governed by the good rate function
\begin{equation}
\tilde{\mathcal{ G}}:= l\in \mathcal{ M}( \mathcal{ E}) \mapsto \inf \left\lbrace \int_{ \mathbb{ R}} \mathcal{ H}( \lambda^{ \omega} \vert P^{ \lambda, \omega}) \mu(\dd \omega),\ \lambda\in \mathcal{ Y}, \lambda_{ 2}= \mu, \int_{ \mathbb{ R}} \lambda^{ \omega}(\dd x) \mu(\dd \omega) = l(\dd x)\right\rbrace
\end{equation}
Moreover, $ \tilde{\mathcal{ G}}(\cdot)$ has a unique zero $ l^{ \ast}$, given by
\begin{equation}
\label{eq:l_ast}
l^{ \ast}(\dd x):= \int_{ \mathbb{ R}} \lambda^{ \ast, \omega}(\dd x) \mu(\dd \omega),
\end{equation}
where $ \lambda^{ \ast}$ is the unique zero of $ \mathcal{ G}(\cdot)$ given by Proposition~\ref{prop:mckean_vlasov}.
\end{proposition}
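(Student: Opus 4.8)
\textbf{Proof proposal for Proposition~\ref{prop:LN_x}.}
The plan is to deduce the statement from Theorem~\ref{theo:LN_x_om} by the contraction principle. Introduce the projection map
$p_{1}:\mathcal{Y}=\mathcal{M}(\mathcal{E}\times\mathcal{F})\to\mathcal{M}(\mathcal{E})$ sending $\lambda$ to its first marginal $p_{1}\lambda(\dd x):=\lambda(\dd x,\mathcal{F})$. One first checks that $p_{1}$ is continuous for the respective weak topologies: if $\lambda_{n}\to\lambda$ in $\mathcal{Y}$, then for every bounded continuous $\phi$ on $\mathcal{E}$ the function $(x,\omega)\mapsto\phi(x)$ is bounded continuous on $\mathcal{E}\times\mathcal{F}$, whence $\int\phi\,\dd(p_{1}\lambda_{n})=\int\phi(x)\,\lambda_{n}(\dd x,\dd\omega)\to\int\phi(x)\,\lambda(\dd x,\dd\omega)=\int\phi\,\dd(p_{1}\lambda)$. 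Moreover, by the very definitions \eqref{eq:LN_proc} and \eqref{eq:LN_proc_single} one has $\tilde{L}_{N}^{\underline{u}}=p_{1}\circ L_{N}^{\underline{u}}$, hence $\tilde{\mathbf{P}}_{N}^{\underline{\omega}}=\mathbf{P}_{N}^{\underline{\omega}}\circ p_{1}^{-1}$.

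Since by Theorem~\ref{theo:LN_x_om} the sequence $\{\mathbf{P}_{N}^{\underline{\omega}}\}_{N\geq1}$ satisfies a strong LDP in $\mathcal{Y}$ at speed $N$ with the good rate function $\mathcal{G}$, the contraction principle (see e.g. \cite[Sec.~4.2]{Dembo1998}) yields that $\{\tilde{\mathbf{P}}_{N}^{\underline{\omega}}\}_{N\geq1}$ satisfies a strong LDP in $\mathcal{M}(\mathcal{E})$ at speed $N$ with the good rate function $l\mapsto\inf\{\mathcal{G}(\lambda):\,\lambda\in\mathcal{Y},\ p_{1}\lambda=l\}$. It then remains only to identify this contracted functional with $\tilde{\mathcal{G}}$, which is pure bookkeeping: $\mathcal{G}(\lambda)=+\infty$ unless $\lambda_{2}=\mu$, and in that case, writing the disintegration $\lambda(\dd x,\dd\omega)=\lambda^{\omega}(\dd x)\mu(\dd\omega)$, one has $p_{1}\lambda(\dd x)=\int_{\mathbb{R}}\lambda^{\omega}(\dd x)\mu(\dd\omega)$ and $\mathcal{G}(\lambda)=\int_{\mathbb{R}}\mathcal{H}(\lambda^{\omega}\vert P^{\lambda,\omega})\mu(\dd\omega)$; hence the pair of constraints $\lambda_{2}=\mu$ and $p_{1}\lambda=l$ coincides with the constraint set in the definition of $\tilde{\mathcal{G}}$.

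For the uniqueness of the zero, note first that taking $\lambda=\lambda^{\ast}$ in the infimum is admissible (indeed $p_{1}\lambda^{\ast}=l^{\ast}$ by \eqref{eq:l_ast}), so $\tilde{\mathcal{G}}(l^{\ast})\leq\mathcal{G}(\lambda^{\ast})=0$ by Proposition~\ref{prop:mckean_vlasov}. Conversely, if $\tilde{\mathcal{G}}(l)=0$, pick a minimizing sequence $\lambda_{n}$ with $p_{1}\lambda_{n}=l$ and $\mathcal{G}(\lambda_{n})\to0$; for $n$ large $\lambda_{n}$ lies in the compact level set $\{\mathcal{G}\leq1\}$, so along a subsequence $\lambda_{n}\to\lambda$ in $\mathcal{Y}$, and by lower semicontinuity of $\mathcal{G}$ together with continuity of $p_{1}$ we obtain $\mathcal{G}(\lambda)=0$ and $p_{1}\lambda=l$. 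By Proposition~\ref{prop:mckean_vlasov} this forces $\lambda=\lambda^{\ast}$, hence $l=p_{1}\lambda^{\ast}=l^{\ast}$. There is no genuine difficulty here; the only points needing (minor) care are the continuity of $p_{1}$ and the compactness argument used to push the unique zero through the contraction, both of which rest on the goodness of $\mathcal{G}$ already provided by Theorem~\ref{theo:LN_x_om}.
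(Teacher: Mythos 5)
Your proposal is correct and follows exactly the route the paper intends: the paper states that Proposition~\ref{prop:LN_x} is a straightforward consequence of Theorem~\ref{theo:LN_x_om} and the contraction principle (leaving details to the reader), and your argument supplies precisely those details — continuity of the marginal map $p_{1}$, the identity $\tilde{L}_{N}^{\underline{\omega}}=p_{1}\circ L_{N}^{\underline{\omega}}$, the identification of the contracted rate function with $\tilde{\mathcal{G}}$, and the transfer of the unique zero via goodness and lower semicontinuity of $\mathcal{G}$.
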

\begin{proposition}
\label{prop:LN_x_flow}
Under the assumptions of Section~\ref{sec:model_assumptions}, for any fixed sequence of disorder $ \left\lbrace \omega_{ i}\right\rbrace_{ i\geq1}$ satisfying \eqref{eq:convergence_mes_emp_mu}, \eqref{eq:conv_ell} and \eqref{eq:moment_cond_mu}, the sequence $ \left\lbrace\tilde{\mathbf{ p}}_{ N}^{ \underline{ \omega}}\right\rbrace_{ N\geq1}$ satisfies a large deviation principle in $ \mathcal{ C}([0, T], \mathbb{ R})$ with speed $N$, governed by the good rate function
\begin{equation}
\tilde{ \mathscr{ G}}:= p \mapsto \inf \left\lbrace \mathscr{ G}(q), p_{ t}(\dd x)= q_{ t}^{ \omega}(\dd x) \mu(\dd \omega),\ t\in [0, T]\right\rbrace.
\end{equation}
The rate function $ \tilde{ \mathscr{ G}}$ has a unique zero $p^{ \ast}$, given by
\begin{equation}
p^{ \ast}_{ t}(\dd x)= \lambda_{ t}^{ \ast, \omega}(\dd x) \mu(\dd \omega),\ t\in[0, T],
\end{equation}
where $ \lambda^{ \ast}$ is given by Proposition~\ref{prop:mckean_vlasov}.
\end{proposition}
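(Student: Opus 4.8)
\textbf{Proof proposal for Proposition~\ref{prop:LN_x_flow}.} The plan is to derive this statement from Theorem~\ref{theo:tilde_LN_x_om} by a single application of the contraction principle, exactly as Proposition~\ref{prop:LN_x} is derived from Theorem~\ref{theo:LN_x_om}. The starting point is the identity $\tilde{\mathscr{L}}_N^{\underline\omega}=\Pi\circ\mathscr{L}_N^{\underline\omega}$, where $\Pi$ maps a flow $(q_t)_{t\in[0,T]}$ of measures on $\mathbb{R}\times\mathbb{R}$ to the flow $\bigl(t\mapsto q_t(\cdot\,,\mathbb{R})\bigr)_{t\in[0,T]}$ of its first marginals; this holds because for every $\underline x\in\mathcal{E}^N$ the first marginal of $\frac1N\sum_{i=1}^N\delta_{(x_{i,t},\omega_i)}$ is precisely $\frac1N\sum_{i=1}^N\delta_{x_{i,t}}$. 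I would first check that $\Pi:\mathcal{C}([0,T],\mathcal{M}(\mathbb{R}\times\mathbb{R}))\to\mathcal{C}([0,T],\mathcal{M}(\mathbb{R}))$ is continuous: marginalisation is weakly continuous, and any bounded Lipschitz test function on $\mathbb{R}$ lifts to one on $\mathbb{R}\times\mathbb{R}$ with the same norm, so a modulus-of-continuity estimate on $\mathbb{R}\times\mathbb{R}$ passes to the marginals uniformly in $t\in[0,T]$. Since $\tilde{\mathbf{p}}_N^{\underline\omega}=\mathbf{p}_N^{\underline\omega}\circ\Pi^{-1}$, the contraction principle (e.g.\ \cite[Th.~4.2.1]{Dembo1998}) together with Theorem~\ref{theo:tilde_LN_x_om} then yields a strong LDP for $\bigl\{\tilde{\mathbf{p}}_N^{\underline\omega}\bigr\}_{N\geq1}$ in $\mathcal{C}([0,T],\mathcal{M}(\mathbb{R}))$ at speed $N$, with the good rate function $p\mapsto\inf\bigl\{\mathscr{G}(q):\Pi q=p\bigr\}$ (goodness is preserved under continuous contractions).

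The next step is to put this rate function in the announced form. Since $\mathscr{G}(q)=+\infty$ unless $q\in\mathbb{A}$, the infimum may be taken over $q\in\mathbb{A}$ only; for such $q$ one has $q_2=\mu$ and $q_t(\dd x,\dd\omega)=q_t^\omega(\dd x)\mu(\dd\omega)$, hence $(\Pi q)_t(\dd x)=\int_{\mathbb{R}}q_t^\omega(\dd x)\mu(\dd\omega)$ for all $t$. Therefore $\Pi q=p$ amounts to $p_t(\dd x)=\int_{\mathbb{R}}q_t^\omega(\dd x)\mu(\dd\omega)$ for every $t\in[0,T]$, and the rate function is exactly the $\tilde{\mathscr{G}}$ of the statement.

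Finally I would identify the unique zero. The subtle point is that analysing the equation $\mathscr{G}(q)=0$ directly only gives that $t\mapsto q_t^\omega$ solves the Fokker--Planck equation $\partial_t q_t^\omega=\mathscr{L}^\omega q_t^\omega$ for $\mu$-a.e.\ $\omega$ and that $\int_{\mathbb{R}}q_0^\omega(\dd x)\mu(\dd\omega)=\gamma(\dd x)$, which does not by itself fix the disintegration $\omega\mapsto q_0^\omega$. To circumvent this I would use that $\mathscr{L}_N^{\underline\omega}=\pi\circ L_N^{\underline\omega}$ with $\pi$ the continuous projection of \eqref{eq:mapping_mcE_to_C}, so that $\mathbf{p}_N^{\underline\omega}=\mathbf{P}_N^{\underline\omega}\circ\pi^{-1}$; comparing the LDP of Theorem~\ref{theo:tilde_LN_x_om} with the one obtained by contracting Theorem~\ref{theo:LN_x_om} along $\pi$ and invoking uniqueness of the rate function yields $\mathscr{G}(q)=\inf\bigl\{\mathcal{G}(\lambda):\pi\lambda=q\bigr\}$. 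Since $\mathcal{G}$ is good with unique zero $\lambda^\ast$ (Proposition~\ref{prop:mckean_vlasov}), the infimum defining $\mathscr{G}(q)$ is attained whenever finite, so $\mathscr{G}(q)=0$ iff $q=\pi\lambda^\ast$, i.e.\ $q_t=\lambda_t^\ast$ for all $t$. Consequently $\tilde{\mathscr{G}}(p)=0$ iff $p=\Pi(\pi\lambda^\ast)$, that is $p_t(\dd x)=\int_{\mathbb{R}}\lambda_t^{\ast,\omega}(\dd x)\mu(\dd\omega)$ for all $t\in[0,T]$, which is the claimed $p^\ast$. I do not expect a serious obstacle in this argument; the only genuinely non-routine ingredient is the identification of the zero just described, everything else being bookkeeping around the contraction principle.
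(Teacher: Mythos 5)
Your proposal is correct and follows exactly the route the paper intends: Propositions~\ref{prop:LN_x}--\ref{prop:LN_tilde_flow} are stated there as direct consequences of Theorems~\ref{theo:LN_x_om} and~\ref{theo:tilde_LN_x_om} via the contraction principle, which is what you do (contracting along the marginalization map $\Pi$, with the correct target space $\mathcal{C}([0,T],\mathcal{M}(\mathbb{R}))$ despite the typo in the statement). Your extra care in identifying the unique zero through the representation $\mathscr{G}(q)=\inf\{\mathcal{G}(\lambda):\pi\lambda=q\}$ (which the paper establishes in Section~\ref{sec:proof_flow}, or follows from uniqueness of rate functions as you note) and the attainment of the infima by goodness is exactly the right way to fill in the detail the paper leaves to the reader.
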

\subsubsection{Local empirical measures}
Restrict in this paragraph to the situation where the disorder $ \underline{ u}$ takes its values in a finite set. The presence of a quenched disorder in \eqref{eq:edsi} breaks the exchangeability of the population $\{x_{ i}\}_{ i=1, \ldots, N}$. In particular, the knowledge of the whole empirical measure $L_{ N}^{ \underline{ \omega}}$ in \eqref{eq:LN_proc} is in general not sufficient to understand the behavior of one single particle $x_{ i}$. In this perspective, it is critical to consider the empirical measures restricted to the particles sharing the same disorder. 

 Consider for simplicity the situation where each $ u_{ i}$ belongs to $\{\pm1\}$, but (up to notational complications) the problem can easily be extended to the general case where the disorder takes its values in $\{ \epsilon_{ 1}, \ldots, \epsilon_{ p}\}\in \mathbb{ R}^{ p}$, for any $p\geq1$. In particular, there exists $ \alpha\in (0, 1)$ such that
\begin{equation}
\label{eq:mu_alpha}
\mu= (1- \alpha) \delta_{ -1} + \alpha \delta_{ +1}.
\end{equation}
For fixed $N\geq1$, the inhomogeneity coming from the disorder reduces to the respective sizes $N^{ +}:= \# \{i=1, \ldots, N,\ u_{ i}=+1\}$ and $N^{ -}:=\# \{i=1, \ldots, N,\ u_{ i}=-1\}$ of the particles with local disorder $+1$ and $-1$. Obviously $N= N^{ +}+ N^{ -}$. In this framework, any $ m(\dd x, \dd \omega)\in\mathcal{ Y}$ can be identified with $(m^{ +}(\dd x), m^{ -}(\dd x))\in \mathcal{ M}(\mathcal{ E})\times \mathcal{ M}(\mathcal{ E})$ where, for any bounded and continuous test function $ \phi$
\begin{align}
\int_{ \mathbb{ R}} \phi(x)m^{ +}(\dd x) &:= \frac{1}{ m( \mathcal{ E}\times\{+1\})}\int_{ \mathcal{ E}\times \mathcal{ F}} \phi(x) \mathbf{ 1}_{\{+1\}}(\omega) m(\dd x, \dd \omega),\\
\int_{ \mathbb{ R}} \phi(x)m^{ -}(\dd x) &:= \frac{1}{ m( \mathcal{ E}\times\{-1\})}\int_{ \mathcal{ E}\times \mathcal{ F}} \phi(x) \mathbf{ 1}_{\{-1\}}(\omega) m(\dd x, \dd \omega).
\end{align}
In this set-up, we define the empirical measures restricted to the local populations:
\begin{definition}
For any $ \epsilon\in\{\pm1\}$, define the local empirical measure restricted to the $ \epsilon$-population by
\begin{align}
L_{ N}^{\underline{ u}, \epsilon}:\mathcal{ E}^{ N} & \to \mathcal{ M}(\mathcal{ E}) \nonumber\\
\underline{ x} &\mapsto L_{ N}^{ \underline{u}, \epsilon}[ \underline{ x}]:= \frac{ 1}{ N^{ \epsilon}} \sum_{ \substack{i=1, \ldots, N,\\ u_{ i}= \epsilon}} \delta_{x_{ i}}. \label{eq:LN_proc_pm}
\end{align}
and the corresponding empirical flow by
\begin{equation}
\mathscr{ L}_{ N}^{ \underline{u}, \epsilon}[ \underline{ x}]:= \left(t \mapsto \frac{ 1}{ N^{ \epsilon}} \sum_{ \substack{i=1, \ldots, N,\\ u_{ i}= \epsilon}} \delta_{x_{ i, t}}\right),\ \epsilon\in\{\pm 1\}, \label{eq:LN_flow_pm}
\end{equation}
For any $ \epsilon\in \{\pm 1\}$, denote by
\begin{align}
\mathbf{ M}_{ N}^{ \underline{u}, \epsilon}&:= P_{ N}^{ \underline{u}}\circ \left(L_{ N}^{ \underline{u}, \epsilon}\right)^{ -1},\label{eq:PuN_LN_eps}\\
\mathbf{ m}_{ N}^{ \underline{u}, \epsilon}&:= P_{ N}^{ \underline{u}}\circ \left(\mathscr{ L}_{ N}^{ \underline{u}, \epsilon}\right)^{ -1},\label{eq:PuN_LN_eps_flow}
\end{align} 
the law of the local empirical measure and flow of the coupled processes \eqref{eq:edsi}.
\end{definition}
\begin{remark}
Of course, the definition of $(m^{ -}, m^{ +})$ only makes sense for measures $m$ such that $m(\mathcal{ E}\times\{ \epsilon\})>0$ for $ \epsilon\in \{\pm1\}$. This is not restrictive here: consider the subset of $ \mathcal{ Y}$ 
\begin{equation}
\mathcal{ Y}_{ p}:= \left\lbrace m\in \mathcal{ Y},\ m(\mathcal{ E}\times\{+1\})\geq \frac{ \alpha}{ 2}\ \text{ and } m(\mathcal{ E}\times\{-1\})\geq \frac{ 1- \alpha}{ 2}\right\rbrace.
\end{equation}
Note that, since the deterministic sequence $\{ \omega_{ i}\}_{ i=1,2,\ldots}$ satisfies \eqref{eq:convergence_mes_emp_mu}, there exists a deterministic $N_{ 0}$ such that for all $N\geq N_{ 0}$, almost surely, $(L_{ N}^{ \underline{ \omega}, +}, L_{ N}^{ \underline{ \omega}, -})$ belongs to $ \mathcal{ Y}_{ p}$. The set $ \mathcal{ Y}_{ p}$ is a subset of $ \mathcal{ Y}$, closed w.r.t. the topology of weak convergence, and the functional $ m \mapsto (m^{ +}, m^{ -})$ restricted to $ \mathcal{ Y}_{ p}$ is continuous. 
\end{remark}
Proposition~\ref{prop:LN_tilde} and Proposition~\ref{prop:LN_tilde_flow} below address the behavior of the local empirical measure $ L_{ N}^{ \underline{ \omega}, \epsilon}$ \eqref{eq:LN_proc_pm} and flow $ \mathscr{ L}_{ N}^{ \underline{ \omega}, \epsilon}$ \eqref{eq:LN_flow_pm}:
\begin{proposition}
\label{prop:LN_tilde}
For any fixed sequence of disorder $\{\omega_{ i}\}_{ i\geq1}$ satisfying the assumptions of Section~\ref{sec:model_assumptions}, the sequence $ \left\lbrace\mathbf{ M}_{ N}^{ \underline{ \omega}, +1}\right\rbrace_{ N\geq1}$ (resp. $ \left\lbrace\mathbf{ M}_{ N}^{ \underline{ \omega}, -1}\right\rbrace_{ N\geq1}$) satisfies on $ \mathcal{ M}( \mathcal{ E})$ a large deviation principle with speed $N$, governed by the good rate function $ \bar{ \mathcal{ G}}_{ +1}$ (resp. $ \bar{ \mathcal{ G}}_{ -1}$) given by
\begin{align}
\bar{\mathcal{ G}}_{+1}&:= l \mapsto \inf_{ \lambda \in \mathcal{ M}( \mathcal{ E})} \left\lbrace (1- \alpha) \mathcal{ H}( \lambda \vert P^{ (1- \alpha) \lambda + \alpha l, -1}) + \alpha \mathcal{ H}( l\vert P^{ (1- \alpha) \lambda + \alpha l, +1})\right\rbrace,\label{eq:G_bar_flow}\\
\bar{\mathcal{ G}}_{-1}&:= l \mapsto \inf_{ \lambda \in \mathcal{ M}( \mathcal{ E})} \left\lbrace (1- \alpha) \mathcal{ H}(l\vert P^{ (1- \alpha) l + \alpha \lambda, -1}) + \alpha \mathcal{ H}( \lambda\vert P^{ (1- \alpha) l + \alpha \lambda, +1})\right\rbrace.\label{eq:G_bar_flow_minus}
\end{align}
\end{proposition}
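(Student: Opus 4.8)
The plan is to derive Proposition~\ref{prop:LN_tilde} from Theorem~\ref{theo:LN_x_om} by a contraction argument, once the local empirical measure $L_{N}^{\underline{\omega},+1}$ of \eqref{eq:LN_proc_pm} has been recognised as a continuous image of the double-layer empirical measure $L_{N}^{\underline{\omega}}$ of \eqref{eq:LN_proc}. First I would observe that, in the two-valued setting \eqref{eq:mu_alpha}, any $m\in\mathcal{Y}$ with second marginal $\mu$ disintegrates as $m(\dd x,\dd\omega)=(1-\alpha)\,m^{-}(\dd x)\,\delta_{-1}(\dd\omega)+\alpha\,m^{+}(\dd x)\,\delta_{+1}(\dd\omega)$, so that the effective domain of $\mathcal{G}=\mathcal{G}^{que}$ from \eqref{eq:rate_function_proc} is contained in the closed set $\mathcal{Y}_{p}$ introduced above, on which the projection $\Pi^{+}\colon m\mapsto m^{+}$ is well defined and continuous (cf.\ the discussion preceding the statement). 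Moreover $L_{N}^{\underline{\omega},+1}=\Pi^{+}(L_{N}^{\underline{\omega}})$ whenever $N^{+}\geq1$, and assumption \eqref{eq:convergence_mes_emp_mu}, which forces $N^{+}/N\to\alpha$ and $N^{-}/N\to1-\alpha$ (both positive), provides a deterministic $N_{0}$ such that $L_{N}^{\underline{\omega}}\in\mathcal{Y}_{p}$ for all $N\geq N_{0}$.

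The next step is the transfer of the LDP. For $N\geq N_{0}$ one has $\mathbf{P}_{N}^{\underline{\omega}}(\mathcal{Y}_{p})=1$, and since $\mathcal{Y}_{p}$ is closed and contains the effective domain of the good rate function $\mathcal{G}^{que}$, the strong LDP of Theorem~\ref{theo:LN_x_om} restricts to a strong LDP for $\{\mathbf{P}_{N}^{\underline{\omega}}\}$ in $\mathcal{Y}_{p}$. As $\Pi^{+}$ is continuous on $\mathcal{Y}_{p}$, the contraction principle then yields a strong LDP with good rate function for $\{\mathbf{M}_{N}^{\underline{\omega},+1}\}$ in $\mathcal{M}(\mathcal{E})$, governed by $l\mapsto\inf\{\mathcal{G}^{que}(m)\colon m\in\mathcal{Y}_{p},\ \Pi^{+}m=l\}$. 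Finally I would make this infimum explicit: inserting the disintegration above into the fibered form $\mathcal{G}^{que}(m)=\int_{\mathbb{R}}\mathcal{H}(m^{\omega}\vert P^{m,\omega})\,\mu(\dd\omega)=(1-\alpha)\,\mathcal{H}(m^{-}\vert P^{m,-1})+\alpha\,\mathcal{H}(m^{+}\vert P^{m,+1})$ and noting that, with $m^{+}=l$ prescribed, the only remaining degree of freedom is $\lambda:=m^{-}\in\mathcal{M}(\mathcal{E})$ (every such $\lambda$ being admissible, since the associated $m$ automatically has second marginal $\mu$, hence lies in $\mathcal{Y}_{p}$), one recovers exactly \eqref{eq:G_bar_flow}, where $P^{(1-\alpha)\lambda+\alpha l,\pm1}$ is understood as $P^{m,\pm1}$ with $m=(1-\alpha)\,\lambda\otimes\delta_{-1}+\alpha\,l\otimes\delta_{+1}$. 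The statement for $\{\mathbf{M}_{N}^{\underline{\omega},-1}\}$ and $\bar{\mathcal{G}}_{-1}$ of \eqref{eq:G_bar_flow_minus} follows by the same reasoning applied to $\Pi^{-}\colon m\mapsto m^{-}$.

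The only real obstacle, modest as it is, is that $\Pi^{+}$ is neither globally defined nor globally continuous on $\mathcal{Y}$, because the normalising mass $m(\mathcal{E}\times\{+1\})$ may vanish; this is precisely what forces the preliminary localisation of the LDP to $\mathcal{Y}_{p}$, together with the verification that the effective domain of $\mathcal{G}^{que}$ survives this restriction. Everything else is routine bookkeeping: the unconstrained range $\mathcal{M}(\mathcal{E})$ of the infimum defining $\bar{\mathcal{G}}_{\pm1}$ is justified by the fact that every pair $(l,\lambda)$ yields an admissible $m$, while goodness of $\bar{\mathcal{G}}_{\pm1}$ is inherited automatically from goodness of $\mathcal{G}^{que}$ and continuity of $\Pi^{\pm}$ on $\mathcal{Y}_{p}$ through the contraction principle.
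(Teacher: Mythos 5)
Your argument is exactly the route the paper intends: it declares Proposition~\ref{prop:LN_tilde} a straightforward consequence of Theorem~\ref{theo:LN_x_om} and the contraction principle, and your write-up simply supplies the details (localisation to $\mathcal{Y}_{p}$ where $\Pi^{+}$ is continuous, then contraction and the explicit disintegration of $\mathcal{G}^{que}$ in the two-point disorder case). The proposal is correct as written, so nothing further is needed.
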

\begin{proposition}
\label{prop:LN_tilde_flow}
For any fixed sequence of disorder $ \left\lbrace\omega_{ i}\right\rbrace_{ i\geq1}$ satisfying the assumptions of Section~\ref{sec:model_assumptions}, the sequence $ \left\lbrace\mathbf{ m}_{ N}^{ \underline{ \omega}, +1}\right\rbrace_{ N\geq1}$ (resp. $ \left\lbrace\mathbf{ m}_{ N}^{ \underline{ \omega}, -1}\right\rbrace_{ N\geq1}$) satisfies a large deviation principle in $ \mathcal{ C}([0, T], \mathcal{ M}(\mathbb{ R}))$ with speed $N$, governed by the good rate function $\bar{ \mathscr{ G}}_{ +1}$ (resp. $\bar{ \mathscr{ G}}_{ -1}$) given by
\begin{align}
\bar{ \mathscr{ G}}_{ +1}&:= p \mapsto \inf_{ q\in \mathcal{ C}([0, T], \mathbb{ R})} \mathscr{ G}(q, p),\\
\bar{ \mathscr{ G}}_{ -1}&:= p \mapsto \inf_{ q\in \mathcal{ C}([0, T], \mathbb{ R})} \mathscr{ G}(p, q).
\end{align}
\end{proposition}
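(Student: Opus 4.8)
The plan is to obtain Proposition~\ref{prop:LN_tilde_flow} from the flow large deviation principle of Theorem~\ref{theo:tilde_LN_x_om} by the contraction principle, in the same spirit in which Proposition~\ref{prop:LN_tilde} follows from Theorem~\ref{theo:LN_x_om}. Throughout one uses the two-point structure \eqref{eq:mu_alpha} of $\mu$.

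First I would introduce the closed subset $\widehat{\mathcal Y}_{p}\subset\mathcal C([0,T],\mathcal M(\mathbb R\times\mathbb R))$ consisting of the flows $q=(q_{t})_{t\in[0,T]}$ with $q_{t}(\mathbb R\times\{+1\})\geq\alpha/2$ and $q_{t}(\mathbb R\times\{-1\})\geq(1-\alpha)/2$ for every $t$; this is the flow analogue of the set $\mathcal Y_{p}$ from the Remark preceding the proposition. Three observations are then combined. First, by \eqref{eq:convergence_mes_emp_mu} there is a deterministic $N_{0}$ with $\mathscr L_{N}^{\underline\omega}\in\widehat{\mathcal Y}_{p}$ for all $N\geq N_{0}$ (this only concerns the disorder marginal $\tfrac1N\sum_{i}\delta_{\omega_{i}}$, which does not depend on $t$). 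Second, $\{\mathscr G<\infty\}\subseteq\mathbb A\subseteq\widehat{\mathcal Y}_{p}$, since $q\in\mathbb A$ forces $q_{t}(\mathbb R\times\{\epsilon\})=\mu(\{\epsilon\})$ for all $t$. From these two facts and the closedness of $\widehat{\mathcal Y}_{p}$ one deduces, by the standard argument for restricting an LDP to a closed set that eventually carries full mass and contains the effective domain (cf.\ \cite[\S4.1]{Dembo1998}), that $\{\mathbf p_{N}^{\underline\omega}\}_{N\geq N_{0}}$ satisfies on $\widehat{\mathcal Y}_{p}$ the LDP with good rate function $\mathscr G|_{\widehat{\mathcal Y}_{p}}$. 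Third, the conditioning map $\Phi_{\epsilon}\colon q\mapsto q^{\epsilon}$, with $q^{\epsilon}_{t}(\dd x):=q_{t}(\dd x,\{\epsilon\})/q_{t}(\mathbb R\times\{\epsilon\})$, is a well-defined and (this is the point to check, see below) continuous map $\widehat{\mathcal Y}_{p}\to\mathcal C([0,T],\mathcal M(\mathbb R))$, and $\mathscr L_{N}^{\underline\omega,\epsilon}=\Phi_{\epsilon}(\mathscr L_{N}^{\underline\omega})$ for $N\geq N_{0}$ by \eqref{eq:LN_flow_pm}. Applying the contraction principle (\cite[Thm.~4.2.1]{Dembo1998}) to $\Phi_{+1}$, resp.\ $\Phi_{-1}$, then yields the LDP for $\{\mathbf m_{N}^{\underline\omega,+1}\}_{N\geq1}$, resp.\ $\{\mathbf m_{N}^{\underline\omega,-1}\}_{N\geq1}$, on $\mathcal C([0,T],\mathcal M(\mathbb R))$ with good rate function $p\mapsto\inf\{\mathscr G(q):q\in\widehat{\mathcal Y}_{p},\ q^{+1}=p\}$, resp.\ with $q^{-1}=p$.

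Next I would identify this contracted rate function with $\bar{\mathscr G}_{+1}$, resp.\ $\bar{\mathscr G}_{-1}$, which is a matter of unwinding \eqref{eq:rate_function_mscG} and \eqref{eq:def_mscK}. Any $q$ with $\mathscr G(q)<\infty$ has $q_{2}=\mu$ and is therefore reconstructed from its two local flows through $q_{t}=(1-\alpha)\,q_{t}^{-1}\otimes\delta_{-1}+\alpha\,q_{t}^{+1}\otimes\delta_{+1}$; among those, the ones with prescribed $q^{+1}=p$ are parametrized by $q^{-1}=:q'$ ranging over $\mathcal C([0,T],\mathcal M(\mathbb R))$. For such $q$, using $\mu=(1-\alpha)\delta_{-1}+\alpha\delta_{+1}$, one has $q_{0}=(1-\alpha)q'_{0}+\alpha p_{0}$, $\gamma=(1-\alpha)\gamma^{-1}+\alpha\gamma^{+1}$, $\int\mathscr K(q,\omega)\mu(\dd\omega)=(1-\alpha)\mathscr K(q,-1)+\alpha\mathscr K(q,+1)$, and the drift $\beta^{q_{t},\omega}$ entering $\mathscr L^{\omega}$ in \eqref{eq:def_mscK} is the one attached to the combined flow; this is precisely the quantity denoted $\mathscr G(q',p)$ in the statement. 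Hence $\inf\{\mathscr G(q):q\in\widehat{\mathcal Y}_{p},\ q^{+1}=p\}=\inf_{q'}\mathscr G(q',p)=\bar{\mathscr G}_{+1}(p)$, and symmetrically for $\epsilon=-1$; goodness of $\bar{\mathscr G}_{\pm1}$ is automatic, being the image of the good rate function $\mathscr G$ under a continuous map. One may equivalently first push the LDP on $\widehat{\mathcal Y}_{p}$ forward by $q\mapsto(q^{-1},q^{+1})$, obtaining a joint LDP for $(\mathscr L_{N}^{\underline\omega,-1},\mathscr L_{N}^{\underline\omega,+1})$ with rate function $(q',p)\mapsto\mathscr G(q',p)$, and then project onto either coordinate.

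The hard part is the uniform-in-time continuity of $\Phi_{\epsilon}$. For a fixed $t$, the map $m\mapsto m^{\epsilon}$ is continuous on $\mathcal Y_{p}$ (this is stated in the Remark above). To promote this to continuity of $\Phi_{\epsilon}$ for the sup-topology, I would argue that if $q^{(n)}\to q$ in $\widehat{\mathcal Y}_{p}$, then the set $K:=\overline{\bigcup_{n}q^{(n)}([0,T])}\cup q([0,T])$ is a compact subset of $\mathcal M(\mathbb R\times\mathbb R)$ contained in $\mathcal Y_{p}$ (it is closed, and totally bounded because, up to finitely many compacts, it lies in an arbitrarily small neighbourhood of the compact set $q([0,T])$); then $m\mapsto m^{\epsilon}$ is uniformly continuous on $K$, which gives $\sup_{t}d\bigl(q^{(n),\epsilon}_{t},q^{\epsilon}_{t}\bigr)\to0$ for any metric $d$ compatible with the weak topology on $\mathcal M(\mathbb R)$. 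This topological point, together with the harmless reduction to the closed subspace $\widehat{\mathcal Y}_{p}$, is the only real obstacle; the rest is the soft machinery of the contraction principle and the bookkeeping above.
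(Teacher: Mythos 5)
Your proposal is correct and follows exactly the route the paper intends: the paper offers no written proof of this proposition, saying only that it is a straightforward consequence of Theorem~\ref{theo:tilde_LN_x_om} and the contraction principle, and your argument is precisely that contraction-principle argument carried out in detail (restriction to a closed set of full measure containing the effective domain, continuity of the conditioning map uniformly in $t$, identification of the contracted rate function with $\bar{\mathscr{G}}_{\pm1}$). The only point to tighten is that the pointwise continuity of $m\mapsto m^{\epsilon}$ quoted from the paper's Remark really requires restricting further to measures supported on $\mathbb{R}\times\{\pm1\}$ (a closed condition automatically satisfied by the empirical flows and by every flow in the domain of $\mathscr{G}$), so this support condition should be built into your set $\widehat{\mathcal{Y}}_{p}$; otherwise weak limits can move mass onto disorder values arbitrarily close to, but distinct from, $\pm1$ and break continuity of the conditioning map.
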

Propositions~\ref{prop:LN_x},~\ref{prop:LN_x_flow},~\ref{prop:LN_tilde},~\ref{prop:LN_tilde_flow} are straightforward consequences of Theorems~\ref{theo:LN_x_om} and~\ref{theo:tilde_LN_x_om} and the contraction principle. We leave the proofs to the reader.
\subsection{ Links with the existing litterature and perspectives}
\subsubsection{Large population behavior of mean-field systems}
Systems of mean-field interacting diffusions and their relations to nonlinear PDEs of McKean-Vlasov type have been studied in numerous situations. Concerning large deviations results, the paper of Dawson and G\"artner \cite{MR885876} (see also \cite{daiPra96,MR1331217} and references therein) addresses large deviations of the empirical flow for homogeneous mean-field systems, using projective limits arguments. Similar techniques have been recently adapted to the case of diffusions with spatial interactions \cite{Muller:2015aa}. The methods introduced in \cite{MR885876} allow to consider a larger class of diffusions than we do here (with weaker regularity and  unbounded coefficients). It is likely that a similar approach would still be applicable to our case, but we keep the assumptions of Section~\ref{sec:model_assumptions} for the simplicity of exposition. Moreover, although this work uses critical ideas from \cite{MR885876}, it differs in the sense that we do not only study the behavior of the empirical flow, but also the empirical measure, which is a more general object.  
\subsubsection{Large deviations for disordered systems}
The question of quenched large deviations for disordered particle systems has been addressed in various contexts. Examples on the subject are: random walks in random environment \cite{MR1785454,MR2779403},  Gibbs random fields \cite{MR976534}, random projections on $\ell^{ p}$-balls \cite{Bovier:2014aa, Gantert:2015ab}, interacting diffusions \cite{MR1307957,MR2248897}, regular conditional probabilities \cite{Zuijlen:2016aa}.

As mentioned earlier in the introduction, one of the first papers to address inhomogeneous mean-field diffusions is \cite{daiPra96}, where \emph{averaged large deviations} are proven in the case of bounded coefficients. This result was extended to unbounded disorder in \cite{Luconthesis}. As a corollary, averaged law of large numbers and fluctuations are derived in \cite{daiPra96}, using in particular techniques from Bolthausen \cite{Bolthausen1986}. Averaged moderate large deviations for Kuramoto-type oscillators are addressed in \cite{thesis_dahms}.

The present work addresses the quenched counterpart of the large deviations result of \cite{daiPra96}, that is when the disorder sequence is frozen once and for all. Note that we allow the coefficients of the diffusion to be possibly unbounded w.r.t. the disorder (as it is the case in the Kuramoto model). We also capture here the possible dependence of the initial condition in the random environment (recall \eqref{eq:initial_condition_edsN}). A quenched law of large numbers result easily follows (Remark~\ref{rem:quenched_LLN}) from the quenched large deviations estimate, by a usual uniqueness result of the zeros of the rate function. 

Note that, contrary to \cite{daiPra96}, it is very unlikely that one could derive quenched fluctuations from the results of the present paper, for the simple reason that quenched fluctuations do not hold in general: consider the simple case of \eqref{eq:edsi} where $g(x, \omega) = \omega$ and $f \equiv 0$, that is $ \dd x_{ i, t}= \omega_{ i} \dd t + \dd B_{ i, t}$, $N$ independent (but not identically distributed) Brownian motions with different drifts. Considering the fluctuations of the empirical measure of such model requires to study the convergence of functionals of the type $ \sqrt{N} \left( \frac{ 1}{ N} \sum_{ i=1}^{ N} \phi(\omega_{ i}) - \mathbb{ E}( \phi(\omega))\right)$. For a fixed realization of $ \left\lbrace\omega_{ i}\right\rbrace_{ i\geq1}$ and a nontrivial $ \phi$, this quantity does not converge (it only converges in law w.r.t. $ \omega$). A reasonable notion of quenched fluctuations for this class of models is more intricate and has already been addressed in \cite{Lucon2011}. In any case, it is doubtful that the fluctuation results of \cite{Lucon2011} could be derived from the results of the present paper. 
\subsubsection{Non-exchangeability and propagation of chaos}
A well-known result \cite{SznitSflour} about mean-field systems tells that when the particles are exchangeable (in the non-disordered or averaged cases for example) the \emph{macroscopic} convergence of the empirical measure of the particles to the McKean-Vlasov PDE \eqref{eq:mckean_vlasov} is equivalent to the \emph{microscopic} notion of propagation of chaos, that is the convergence in law of any $(x_{ 1}, \ldots, x_{ k})$ to $k$ independent copies of the nonlinear process $ \bar x$ (see Remark~\ref{rem:McKV_nonlinear}), whose law solves \eqref{eq:mckean_vlasov}.

The influence of a quenched disorder on the macroscopic dynamics of mean-field particles systems has been particularly studied in the case of the Kuramoto model (see \cite{MR3207725,Bertini:2013aa,Lucon:2015aa} and references therein). The Kuramoto model without disorder exhibits a phase transition \cite{Acebron2005}: when the strength of interaction $K$ is smaller than a critical value $K_{ c}$, the McKean-Vlasov PDE \eqref{eq:mckean_vlasov} admits only the flat solution $ \frac{ 1}{ 2\pi}$ as a stationary solution (incoherence), whereas there is at least one nontrivial stationary solution $q(\cdot)$ when $K\geq K_{ c}$ (synchrony). By invariance by rotation of $-K \cos(\cdot)$, this synchronized state actually generates a whole circle of invariant states $ \mathscr{ C}:=\{q(\cdot - \psi),\ \psi\in \mathbb{ S}\}$. If one had to sum-up in one word the conclusions of \cite{MR3207725,Bertini:2013aa,Lucon:2015aa}, the main message would be that adding a quenched disorder to the Kuramoto model induces an asymmetry in the system which generates macroscopic traveling-waves along a perturbed manifold $ \mathscr{ C}^{ pert}\approx \mathscr{ C}$. The speed and direction of the traveling waves, as well as the time-scale at which they can be observed depend on the nature of the asymmetry induced by the disorder (see \cite{MR3207725,Bertini:2013aa,Lucon:2015aa} for precise statements).

In our quenched set-up, the exchangeability assumption of the particles is obviously not satisfied. A natural question is the following: is it possible to understand the behavior of one single particle w.r.t. the macroscopic behavior of the whole empirical measure? In the case of a binary disorder $ \omega\in \{\pm1\}$, with a majority of $+1$, the macroscopic behavior of the system is a traveling wave in the $+1$ direction \cite{MR3207725,Lucon:2015aa}. In this context, what can we say about an atypical behavior of a particle $ x^{ -}$ with local disorder $ \omega= -1$? Can we measure the difficulty for this particle of going in the direction opposite to the majority? A first step in this direction could be to derive from Proposition~\ref{prop:LN_tilde} Gibbs conditioning principle (\cite{Dembo1998}, p.~323) for the law of $x^{ -}$ conditioned on the behavior of $L_{ N}^{ +}$. This would require in particular a better understanding of the rate function $ \bar{\mathcal{ G}}$ defined in \eqref{eq:G_bar_flow} in the case of the Kuramoto model.
\subsection{ Outline of the paper}
The proof of Theorem~\ref{theo:LN_x_om} is divided into several steps. Section~\ref{sec:abstract_sanov} states a Sanov-type result in the case the interaction between particles is removed. Section~\ref{sec:large_deviations_with_interaction} establishes the full large deviation result for a rate function that is identified with $ \mathcal{ G}$ in Section~\ref{sec:indentification_rate_function}. The proof of the large deviations of the empirical flow (Theorem~\ref{theo:tilde_LN_x_om}) is given in Section~\ref{sec:proof_flow}. Finally, Appendix~\ref{sec:proof_Sanov} contains the proof of the abstract Sanov theorem used in Section~\ref{sec:abstract_sanov}.
\section{Sanov theorem for independent diffusions}
\label{sec:abstract_sanov}
For any generic disorder sequence $ \left\lbrace u_{ i}\right\rbrace_{ i\geq1}$, the system \eqref{eq:edsi} under the disorder $ \underline{ u}=(u_{ 1}, \ldots, u_{ N})$ can be rewritten in a compact form:
 \begin{equation}
\label{eq:edsN} 
\underline{x}_{ t} = \underline{ x}_{ 0} + \int_{ 0}^{t} g( \underline{ x}_{ s}, \underline{ u})\dd s- \int_{ 0}^{t}\nabla{H_{N}}(\underline{x}_{ s}, \underline{u})\dd s + \underline{B}_{ t},
\end{equation}
where $ \underline{ x}:= (x_{ 1}, \ldots, x_{ N})$, $g( \underline{ x}, \underline{ u}):=(g(x_{ 1}, u_{ 1}), \ldots, g(x_{ N}, u_{ N}))$ and $\underline{B}:=(B_1, \dots, B_N)$. Introduce now the same system where the interaction has been removed:
\begin{equation}
\label{eq:edsdB}
\underline{x}_{ t} = \underline{ x}_{ 0} + \int_{ 0}^{t} g(\underline{x}_{ s}, \underline{u})\dd s + \underline{B}_{ t}.
\end{equation}
The solution $\underline{x}=(x_1, \dots, x_N)$ to \eqref{eq:edsdB} consists of $N$ independent (but not identically distributed since the disorder $ u_{ i}$ is different for each $x_{ i}$) copies of
\begin{equation}
\label{eq:diff_copy}
x_{ t}= x_{ 0} + \int_{ 0}^{t}g(x_{ s}, u) \dd s + b_{t},
\end{equation}where $b$ is a standard Brownian motion in $ \mathbb{ R}$. We denote by $W^{u}$ the law of \eqref{eq:diff_copy} so that the law of \eqref{eq:edsdB} is 
\begin{equation}
\label{eq:WuN}
W^{\underline{u}}_{N}(\dd \underline{ x}):= W^{u_{1}}(\dd x_{ 1})\times\dots\times W^{u_{N}}(\dd x_{ N}).
\end{equation}
Define 
\begin{equation}
\label{eq:WuN_LN}
\mathbf{ W}_{ N}^{ \underline{u}}:= W_{ N}^{ \underline{u}}\circ \left(L_{ N}^{ \underline{u}}\right)^{ -1}
\end{equation} 
as the law of the empirical measure $L_{ N}^{ \underline{u}}$ of the uncoupled diffusions \eqref{eq:edsdB}, in the environment $ \underline{u}$. The main result of this section is the following:
\begin{proposition}
\label{prop:LDP_uncoupled}
For any fixed disorder sequence $ \left\lbrace u_{ i}\right\rbrace_{ i\geq1}$ and any measure $ \nu \in \mathcal{ M}( \mathbb{ R})$ such that
\begin{equation}
\label{eq:hyp_ui}
 \frac{ 1}{ N} \sum_{ i=1}^{ N} \delta_{ u_{ i}} \to_{ N\to \infty} \nu, \text{ for the weak topology on }\mathcal{ M}( \mathbb{ R}),
\end{equation}
 the sequence $ \mathbf{ W}_{ N}^{ \underline{ u}}$ satisfies a large deviation principle in $ \mathcal{ Y}$, with speed $N$ and good rate function
\begin{align}
\mathcal{ I}_{ \nu}(\lambda):= \begin{cases}
 \mathcal{ H}(\lambda \vert W^{ u}\times \nu), &\text{if } \lambda_{ 2}( \dd u)= \nu( \dd u),\\
+\infty & \text{ otherwise},\label{eq:rate_I_no_inter}
\end{cases} 
\end{align}
where $ \mathcal{ H}(\cdot \vert \cdot)$ is the relative entropy defined in \eqref{eq:relative_entropy}.
\end{proposition}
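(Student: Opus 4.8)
The plan is to derive the large deviation principle for $\mathbf{W}_N^{\underline{u}}$ from a quenched Sanov theorem for triangular arrays of independent, non-identically distributed random variables, following the projective-limit approach of Dawson--G\"artner and the formulation of Cattiaux--L\'eonard. The key observation is that the empirical measure $L_N^{\underline u}$ lives on $\mathcal{Y}=\mathcal{M}(\mathcal{E}\times\mathcal{F})$ and that under $W_N^{\underline u}$ the pairs $(x_i,u_i)$ are independent with $(x_i,u_i)\sim W^{u_i}\otimes\delta_{u_i}$. So the problem reduces to a Sanov-type statement: if $\{Z_i\}_{i\geq 1}$ are independent $\mathcal{E}\times\mathcal{F}$-valued with laws $\{\rho_i\}$ and the empirical measure of the laws $\frac1N\sum_{i=1}^N\rho_i$ converges weakly to some $\rho$, then $\frac1N\sum_{i=1}^N\delta_{Z_i}$ satisfies a LDP with rate $\lambda\mapsto\mathcal{H}(\lambda\,|\,\rho)$, PROVIDED an exponential tightness / compatibility condition holds. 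In our case $\rho_i=W^{u_i}\otimes\delta_{u_i}$ and, under \eqref{eq:hyp_ui}, $\frac1N\sum_i\rho_i\to W^u\times\nu$ (disintegrated form), so the candidate rate is exactly $\mathcal{I}_\nu$.

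First I would reduce from the path space $\mathcal{E}=\mathcal{C}([0,T],\mathbb{R})$ to finite-dimensional marginals: for any finite partition $0\le t_1<\dots<t_k\le T$, the projection $\mathcal{E}\to\mathbb{R}^k$ is continuous, so by the Dawson--G\"artner projective limit theorem (\cite{Dembo1998}, Thm.~4.6.1) it suffices to establish the LDP at the level of the projected empirical measures on $\mathcal{M}(\mathbb{R}^k\times\mathbb{R})$ and then check that the projective limit of the finite-dimensional rate functions coincides with $\lambda\mapsto\mathcal{H}(\lambda\,|\,W^u\times\nu)$; the latter identification is a standard entropy-projection fact (relative entropy is the supremum over finite sub-$\sigma$-fields of the relative entropies of the restrictions). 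At the finite-dimensional level, the law of $x_{i}$ under $W^{u_i}$ restricted to the times $t_1,\dots,t_k$ has, by Girsanov against Wiener measure, a density controlled by the bounds \eqref{eq:bound_g_x}--\eqref{eq:Lip_g_x} on $g$ together with the exponential-moment assumptions \eqref{eq:ell_exp_moments}--\eqref{eq:control_ell} on the initial laws $\gamma^\omega$; this gives the uniform exponential integrability needed to run the abstract Sanov argument (via the upper bound from an exponential Chebyshev / Varadhan-type computation and the lower bound from a change of measure and the law of large numbers for triangular arrays).

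The heart of the argument, and where the quenched hypothesis \eqref{eq:hyp_ui} enters crucially, is the abstract Sanov theorem for independent non-identically distributed variables whose empirical law of laws converges. I would prove this following Cattiaux--L\'eonard: the upper bound comes from the sub-additivity/exponential-Chebyshev bound $\frac1N\ln\mathbb{E}\exp\bigl(N\langle L_N,\varphi\rangle\bigr)=\frac1N\sum_{i=1}^N\ln\int e^{\varphi}\,d\rho_i\to\int\ln\int e^{\varphi(\cdot,u)}\,dW^u\,\nu(du)$ for bounded continuous $\varphi$, whose Legendre transform is $\mathcal{H}(\cdot\,|\,W^u\times\nu)$ (here one needs \eqref{eq:conv_ell}, \eqref{eq:moment_cond_mu} to push the convergence of the log-Laplace functionals from bounded $\varphi$ to the class of functions with the mild growth allowed by $g$ and $\gamma^\omega$); the lower bound is obtained by tilting each $\rho_i$ by a fixed reference $\sigma\ll W^u\times\nu$, applying the law of large numbers to the tilted independent array, and optimizing. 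Exponential tightness of $\{\mathbf{W}_N^{\underline u}\}$ on $\mathcal{Y}$ follows from exponential tightness of $\{W^{u_i}\}$ uniformly in the $u_i$ (using \eqref{eq:control_ell} and the $g$-bounds to get uniform control of moments and of the modulus of continuity of the paths) plus the convergence \eqref{eq:hyp_ui} which keeps the disorder empirical measure in a compact set.

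The main obstacle I anticipate is precisely the passage from bounded test functions to the unbounded growth permitted by assumptions \eqref{eq:bound_g_x}--\eqref{eq:control_ell}: one must ensure that the log-Laplace limit $\frac1N\sum_i\ln\int e^{\varphi}\,d\rho_i\to\int\ln\int e^{\varphi}\,d\rho$ holds not just for $\varphi\in C_b$ but for a sufficiently rich class to identify the rate function, and that the candidate rate function $\mathcal{I}_\nu$ is genuinely good (compact level sets), which requires the uniform-in-$\omega$ exponential moment bounds together with the $p$-th moment convergence \eqref{eq:conv_ell} and the moment condition \eqref{eq:moment_cond_mu}. This is the point at which \cite{daiPra96} assumed bounded coefficients; handling the unbounded case is, as the introduction notes, one of the genuine generalizations, and it is carried out in detail in Appendix~\ref{sec:proof_Sanov}.
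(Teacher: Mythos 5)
Your overall strategy (an abstract quenched Sanov theorem for independent, non-identically distributed variables, obtained via the Dawson--G\"artner/Cattiaux--L\'eonard projective-limit machinery) is indeed the route the paper takes: Proposition~\ref{prop:LDP_uncoupled} is deduced from the abstract statement of Proposition~\ref{prop:leonard} applied to $\rho^{u}=W^{u}$. But your sketch has a genuine gap exactly at the point where the paper's proof does actual work. The convergence you assert, $\frac{1}{N}\sum_{i}\ln\int e^{\varphi(\cdot,u_{i})}\,\dd W^{u_{i}}\to\int\ln\int e^{\varphi(\cdot,u)}\,\dd W^{u}\,\nu(\dd u)$ (equivalently, $\frac1N\sum_i W^{u_i}\otimes\delta_{u_i}\to W^{u}\times\nu$), does \emph{not} follow from the weak convergence \eqref{eq:hyp_ui} alone: it requires the map $u\mapsto W^{u}$ to be Feller, i.e.\ $u\mapsto\int\psi\,\dd W^{u}$ continuous for bounded continuous $\psi$ on $\mathcal{E}$. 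Establishing this is the entire content of the paper's proof (Lemma~\ref{lem:Wu_Feller}): one couples the diffusions with parameters $u$ and $v$ through the same Brownian motion, uses \eqref{eq:Lip_g_om}--\eqref{eq:Lip_g_x} and Gronwall to control $\sup_{t\le T}|x_t-\tilde x_t|$, and combines this with the Feller property of $\omega\mapsto\gamma^{\omega}$ via a tightness/uniform-continuity argument for the initial condition. Your appeal to ``Girsanov density control'' of finite-dimensional marginals does not substitute for this step, and you never prove (or even isolate) the continuity in $u$.

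Two further points where your plan diverges from what is actually needed. First, you invoke \eqref{eq:control_ell}, \eqref{eq:conv_ell}, \eqref{eq:moment_cond_mu} and claim one must extend the log-Laplace convergence beyond bounded continuous $\varphi$ and separately prove goodness of $\mathcal{I}_{\nu}$. This is a misreading of where the difficulty lies: the proposition is stated under \eqref{eq:hyp_ui} only, the LDP is for the weak topology on $\mathcal{Y}$, so the class $\mathcal{C}_{b}(\mathcal{E}\times\mathcal{F})$ of test functions suffices, and in the dual-space formulation of Theorem~\ref{theo:DawsonGaertner} the goodness of the rate function and the strong LDP come for free (the Legendre transform of an everywhere-finite $\Lambda$ has $\ast$-weakly compact level sets); the identification of $\Lambda^{\ast}$ with the relative entropy is Lemma~\ref{lem:IH}. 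The moment and exponential-moment hypotheses enter only later, in the truncation/exponentially good approximation step for the interacting system (Proposition~\ref{prop:limgam}), not here. Second, your alternative reduction to time marginals plus ``exponential tightness of $\{W^{u_{i}}\}$ uniformly in the $u_{i}$'' would fail as stated: the drift bound $\sup_x|g(x,u)|\le C_{g}(1+|u|^{k_{1}})$ and the initial laws $\gamma^{u_{i}}$ are not uniform in $u$ when the disorder sequence is unbounded, which is allowed under \eqref{eq:hyp_ui}; the paper's formulation on the algebraic dual of $\mathcal{C}_{b}(\mathcal{E}\times\mathcal{F})$ is chosen precisely to avoid any separate exponential tightness argument at this stage.
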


\subsection{ Abstract quenched Sanov Theorem}
Proposition~\ref{prop:LDP_uncoupled} is a consequence of an abstract result valid in a general setting. The result stated in this paragraph appears in different forms in the literature (see for example L\'eonard \cite{2007arXiv0710.1461L}, Proposition~3.2, Cattiaux and Léonard \cite{MR1307957}, Dembo and Zeitouni \cite{Dembo1998}, Th.~6.2.10, or Comets \cite{MR976534}, Th.~III.1). We state the result (Proposition~\ref{prop:leonard}) in this paragraph and refer to Appendix~\ref{sec:proof_Sanov} where a sketch of proof is given for the sake of completeness. We refer to \cite{2007arXiv0710.1461L} or \cite{Dembo1998} for details.

Let $ \mathcal{ E}$ and $ \mathcal{ F}$ be two Polish spaces. Recall that in the setting of Section~\ref{sec:model_assumptions}, $\mathcal{ E}= \mathcal{ C}([0, T], \mathbb{ R})$ and $ \mathcal{ F}= \mathbb{ R}$ but the results of this section are true for any Polish spaces. Let $ \mathcal{ X}$ be the algebraic dual of $ \mathcal{ W}:= \mathcal{ C}_{ b}( \mathcal{ E}\times \mathcal{ F})$, the set of bounded and continuous functions on $ \mathcal{ E}\times \mathcal{ F}$. We equip $ \mathcal{ X}$ with the $\ast$-weak topology $ \sigma( \mathcal{ X}, \mathcal{ W})$ and $ \mathcal{ B}$ the corresponding Borel-$ \sigma$ field. Denote by $ \mathcal{ Y}:=\mathcal{ M}( \mathcal{ E}\times \mathcal{ F})$ the set of probability measures on $ \mathcal{ E}\times \mathcal{ F}$. We endow $ \mathcal{ Y}$ with the trace of the $\ast$-weak topology $ \sigma( \mathcal{ X}, \mathcal{ W})$ on $ \mathcal{ Y}$ and with the Borel $ \sigma$-field $ \mathcal{ B}$. Denote by $ \left\langle \lambda\, ,\, w\right\rangle$ the value of the linear functional $ \lambda\in \mathcal{ X}$ at point $ \phi\in \mathcal{ W}$. 

Let us fix a deterministic sequence $ \left\lbrace u_{ i}\right\rbrace_{ i\geq1} \in \mathcal{ F}^{ \mathbb{ N}}$ and a measure $ \nu\in \mathcal{ M}( \mathcal{ F})$ such that 
\begin{equation}
\label{eq:conv_nu_abstract}
 \frac{ 1}{ N} \sum_{ i=1}^{ N} \delta_{ u_{ i}} \to_{ N\to \infty} \nu, \text{ for the weak topology on }\mathcal{ M}( \mathcal{ F}),
\end{equation}
Fix also a measurable mapping $ \rho:\mathcal{ F} \to \mathcal{ M}( \mathcal{ E}), u \mapsto \rho^{ u}$ and let $ \left\lbrace x_{ i}\right\rbrace_{ i\geq1}$ a sequence of independent random variables in $ \mathcal{ E}$ such that for each $i\geq1$, $x_{ i}\sim \rho^{u_{ i}}$. We suppose that the application $ u \mapsto \rho^{ u}$ is Feller on $ \mathcal{ M}( \mathcal{ E})$.

Consider the quenched empirical measure $L_{ N}^{ \underline{ u}}:= \frac{ 1}{ N} \sum_{ i=1}^{ N} \delta_{ (x_{ i}, u_{ i})} \in \mathcal{ Y}$ and denote by $ \mathbf{ m}_{ N}^{ \underline{ u}}$ the law of $L_{ N}^{ \underline{ u}}$ induced on $ \mathcal{ Y}$ by the law $\bigotimes_{ i=1}^{ +\infty} \rho^{u_{ i}}$ of the variables $ \left\lbrace x_{ i}\right\rbrace_{ i\geq1}$. We have the following result:
\begin{proposition}[ (\cite{2007arXiv0710.1461L}, Proposition~3.2)]
\label{prop:leonard}
For any sequence $ \left\lbrace u_{ i}\right\rbrace_{ i\geq1}$ satisfying \eqref{eq:conv_nu_abstract}, the sequence $ \left\lbrace \mathbf{ m}_{ N}^{ \underline{ u}}\right\rbrace_{ N\geq1}$ satisfies a large deviation principle, at speed $N$, governed by the good rate function\begin{align}
\mathcal{ I}_{ \mu}: \mathcal{ Y} &\to [0, +\infty] \nonumber\\
 \lambda & \mapsto 
\begin{cases} \mathcal{ H}(\lambda \vert \rho^{u} \times \nu), & \text{ if } \lambda_{ 2}(d u) = \nu(d u),\\
+\infty & \text{ otherwise}.
\end{cases} \label{eq:gen_rate_I}
\end{align}
\end{proposition}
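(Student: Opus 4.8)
The plan is to establish Proposition~\ref{prop:leonard} via a projective limit argument à la Dawson--Gärtner, reducing the infinite-dimensional large deviation principle on $\mathcal{Y}$ to a collection of finite-dimensional ones obtained from Cramér's theorem. First I would introduce, for each finite family $\phi_1, \ldots, \phi_k \in \mathcal{W} = \mathcal{C}_b(\mathcal{E}\times\mathcal{F})$, the map $\Pi_{\phi_1,\ldots,\phi_k}:\mathcal{Y}\to\mathbb{R}^k$ given by $\lambda\mapsto(\langle\lambda,\phi_1\rangle,\ldots,\langle\lambda,\phi_k\rangle)$. Under this map the push-forward of $L_N^{\underline{u}}$ is $\frac{1}{N}\sum_{i=1}^N Z_i$ where $Z_i := (\phi_1(x_i,u_i),\ldots,\phi_k(x_i,u_i))$ are independent bounded $\mathbb{R}^k$-valued random variables, but \emph{not} identically distributed, since $x_i\sim\rho^{u_i}$ and the $u_i$ vary. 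Hence one needs a Cramér-type theorem for triangular arrays of independent non-identically-distributed bounded variables; the key input is that the normalized log-moment generating functions
\begin{equation*}
\Lambda_N(t) := \frac{1}{N}\sum_{i=1}^N \ln\int_{\mathcal{E}} e^{\langle t,(\phi_1(x,u_i),\ldots,\phi_k(x,u_i))\rangle}\,\rho^{u_i}(\mathrm{d}x)
\end{equation*}
converge as $N\to\infty$ to $\Lambda(t) := \int_{\mathcal{F}}\ln\int_{\mathcal{E}} e^{\langle t,(\phi_1(x,u),\ldots,\phi_k(x,u))\rangle}\,\rho^u(\mathrm{d}x)\,\nu(\mathrm{d}u)$. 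This convergence follows from assumption \eqref{eq:conv_nu_abstract} together with the Feller property of $u\mapsto\rho^u$: the integrand $u\mapsto\ln\int e^{\langle t,\Phi(\cdot,u)\rangle}\rho^u(\mathrm{d}u)$ is bounded (since the $\phi_j$ are bounded) and continuous in $u$ (by Feller continuity and dominated convergence), so it is a legitimate test function for \eqref{eq:conv_nu_abstract}. The Gärtner--Ellis theorem then yields, for each finite projection, a LDP with rate function $\Lambda^*$, the Legendre transform of $\Lambda$.

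Next I would assemble these finite-dimensional LDPs into a LDP on $\mathcal{X}$ (the algebraic dual of $\mathcal{W}$ with the $*$-weak topology) by the Dawson--Gärtner projective limit theorem (\cite{Dembo1998}, Theorem~4.6.1): the projective limit rate function is $\mathcal{J}(\lambda) := \sup_{\phi_1,\ldots,\phi_k}\Lambda^*_{\phi_1,\ldots,\phi_k}(\langle\lambda,\phi_1\rangle,\ldots,\langle\lambda,\phi_k\rangle)$. A duality computation — essentially the Donsker--Varadhan variational formula for relative entropy — identifies this supremum with $\mathcal{H}(\lambda\mid \rho^u\times\nu)$ when $\lambda$ is a probability measure on $\mathcal{E}\times\mathcal{F}$ with $\lambda_2=\nu$, and with $+\infty$ otherwise; the constraint $\lambda_2=\nu$ emerges because test functions depending only on the $\mathcal{F}$-coordinate force $\langle\lambda,\psi\rangle = \int\psi\,\mathrm{d}\nu$ on the set where $\mathcal{J}$ is finite. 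Finally one must transfer the LDP from $\mathcal{X}$ to $\mathcal{Y}=\mathcal{M}(\mathcal{E}\times\mathcal{F})$ with its weak topology: this requires showing that the sequence $\{\mathbf{m}_N^{\underline{u}}\}$ is exponentially tight in $\mathcal{Y}$, which upgrades the weak LDP on $\mathcal{X}$ to a strong LDP supported on $\mathcal{Y}$ and shows the weak and $*$-weak topologies agree on the relevant level sets, also yielding goodness of the rate function.

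The main obstacle I expect is the exponential tightness step, because the variables $x_i$ are not identically distributed and (in the application) the coefficients of \eqref{eq:diff_copy} are unbounded in the disorder, so the usual one-shot tightness estimate for i.i.d.\ empirical measures does not apply directly. The standard route is to build, for each $\varepsilon>0$, a compact set $\mathcal{K}_\varepsilon\subset\mathcal{E}$ with $\sup_i \rho^{u_i}(\mathcal{K}_\varepsilon^c)$ super-exponentially small after suitable truncation, then invoke a criterion of the form: the family of empirical measures is exponentially tight on $\mathcal{M}(\mathcal{E}\times\mathcal{F})$ as soon as $\frac1N\sum_i \delta_{u_i}$ is tight (which holds by \eqref{eq:conv_nu_abstract}) and $\sup_{u\in\mathrm{supp}}\rho^u$ is uniformly tight on compact subsets of $\mathcal{F}$, controlled via Chebyshev-type bounds on moments. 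In the abstract statement this is where the moment and exponential-moment hypotheses on $\gamma^\omega$ (assumptions \eqref{eq:control_ell}, \eqref{eq:conv_ell}, \eqref{eq:moment_cond_mu}) will ultimately be used when Proposition~\ref{prop:LDP_uncoupled} is deduced; at the purely abstract level one simply assumes enough tightness of the family $\{\rho^u\}$. Since the excerpt defers the detailed argument to Appendix~\ref{sec:proof_Sanov}, I would present here only this skeleton: finite-dimensional Gärtner--Ellis $\Rightarrow$ projective limit $\Rightarrow$ entropy identification $\Rightarrow$ exponential tightness and transfer to $\mathcal{Y}$.
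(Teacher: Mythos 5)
Your skeleton (finite-dimensional G\"artner--Ellis for the bounded, independent but non-identically distributed projections $\frac1N\sum_i\phi_j(x_i,u_i)$, then a projective-limit assembly, then identification of the Legendre transform with relative entropy plus the marginal constraint) is essentially the mechanism behind the result the paper invokes: the appendix applies Theorem~\ref{theo:DawsonGaertner} of Dawson--G\"artner, whose hypotheses are exactly the existence and finiteness of the limit $\Lambda(\phi)$ (your $\Lambda_N\to\Lambda$ computation, using the Feller property of $u\mapsto\rho^u$ and \eqref{eq:conv_nu_abstract}, is the same as the paper's), G\^ateaux differentiability of $\Lambda$, and the inclusion $\{\Lambda^{\ast}<\infty\}\subset\mathcal{Y}$; your entropy identification is the content of Lemma~\ref{lem:IH}. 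So the first three steps match the paper.

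The divergence, and the real weakness of your plan, is the last step. The transfer from $\mathcal{X}$ to $\mathcal{Y}$ does not require exponential tightness, and the paper does not prove any: since $\vert\Lambda(\phi)\vert\leq\Vert\phi\Vert_\infty$, the level sets of $\Lambda^{\ast}$ are automatically $\ast$-weakly compact in $\mathcal{X}$, and the only thing to check is condition (3) of Theorem~\ref{theo:DawsonGaertner}, namely that every $\lambda$ with $\Lambda^{\ast}(\lambda)<\infty$ is a genuine ($\sigma$-additive) probability measure with $\lambda_2=\nu$. The paper does this by testing against $a\phi_0$, constants, and monotone sequences $\phi_n\downarrow 0$, then invoking Neveu's Daniell-type representation; the marginal constraint comes from test functions depending on $u$ only, as you noted. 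Your proposal replaces this by an exponential tightness argument whose justification, as you describe it, leans on truncation, Chebyshev bounds and the exponential-moment assumptions on $\gamma^{\omega}$ --- but those hypotheses are neither available at the abstract level nor needed, and your claim that \eqref{eq:control_ell}, \eqref{eq:conv_ell}, \eqref{eq:moment_cond_mu} are what make Proposition~\ref{prop:LDP_uncoupled} follow from the abstract statement is incorrect: that deduction uses only the Feller property of $u\mapsto W^u$ (Lemma~\ref{lem:Wu_Feller}) and \eqref{eq:hyp_ui}; the moment assumptions enter only later, for the interacting system (the coupling of Proposition~\ref{prop:limgam} and the Varadhan step). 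If you insist on the tightness route, it can in fact be carried out with just Feller continuity and \eqref{eq:conv_nu_abstract} (the image under $u\mapsto\rho^u$ of a compact set of disorders is compact in $\mathcal{M}(\mathcal{E})$, hence uniformly tight by Prokhorov, and weak convergence of $\frac1N\sum_i\delta_{u_i}$ controls the fraction of indices with $u_i$ outside a compact), but you should either do that correctly or, more simply, verify the domain condition as the paper does. A minor additional point: the projective-limit theorem already yields a full LDP with a good rate function on $\mathcal{X}$, not merely a weak one, so there is nothing to ``upgrade'' beyond restricting to $\mathcal{Y}$.
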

\begin{remark}
This result has been stated in \cite{2007arXiv0710.1461L} in the context of Monge-Kantorovich problems in optimal transport. A similar result may be found in \cite{MR1307957}, Th.~2.1 in the context of Nelson processes, conditioned on their initial values. Note that one could also see this result as an application of the contraction principle to Th.~III.1 in \cite{MR976534}. Proposition~\ref{prop:leonard} uses techniques of large deviations for projective limits introduced by Dawson and G\"artner \cite{MR885876} (see also Dembo and Zeitouni \cite{Dembo1998}, Th. 6.2.10).
\end{remark}
\subsection{Proof of Proposition~\ref{prop:LDP_uncoupled}}
Proposition~\ref{prop:LDP_uncoupled} follows from Proposition~\ref{prop:leonard} applied to the case $ \mathcal{ E}= \mathcal{ C}([0, T], \mathbb{ R})$, $ \mathcal{ F}= \mathbb{ R}$, $ \rho^{u}= W^{u}$ and $ \mathbf{ m}_{ N}^{ \underline{ u}}:= \mathbf{ W}_{ N}^{ \underline{ u}}$, once we have established the regularity of $ u \mapsto W^{ u}$:
\begin{lemma}
\label{lem:Wu_Feller}
Under the assumptions of Section~\ref{sec:model_assumptions}, the application $ \mathbb{ R} \to \mathcal{ M}( \mathcal{ E}),\ u \mapsto W^{ u}(\dd x)$ is Feller.
\end{lemma}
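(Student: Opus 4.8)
The plan is to show that if $u_n \to u$ in $\mathbb{R}$, then $W^{u_n} \to W^u$ weakly in $\mathcal{M}(\mathcal{E})$, where $\mathcal{E} = \mathcal{C}([0,T],\mathbb{R})$. Since $W^u$ is the law of the solution to \eqref{eq:diff_copy}, which can be built strongly from a fixed Brownian motion $b$ and a fixed initial condition, the natural strategy is a coupling argument: realize all the processes $\{x^{u_n}\}$ and $x^u$ on a common probability space driven by the same $b$, and show $x^{u_n} \to x^u$ in $\mathcal{E}$ in probability (or even in $L^p$), which implies weak convergence of the laws. The one subtlety is the initial condition: by \eqref{eq:initial_condition_edsN} the starting point of \eqref{eq:diff_copy} has law $\gamma^u$, which itself depends on $u$. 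So I would couple the initial conditions too, using the Feller assumption on $\omega \mapsto \gamma^\omega$ together with, say, a Skorokhod-type representation to get random variables $\xi_n \sim \gamma^{u_n}$ with $\xi_n \to \xi$ a.s.\ where $\xi \sim \gamma^u$; the uniform exponential moment control \eqref{eq:control_ell} (applied along $u_n \to u$, so the $|u_n|$ stay bounded) gives uniform integrability, hence $L^p$ convergence of the initial data.

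The core estimate is then a Gronwall argument on the trajectories. Write $x^{u_n}_t - x^u_t = (\xi_n - \xi) + \int_0^t \big( g(x^{u_n}_s, u_n) - g(x^u_s, u)\big)\dd s$ (the Brownian terms cancel since we use the same $b$). Split the integrand as $\big(g(x^{u_n}_s,u_n) - g(x^u_s,u_n)\big) + \big(g(x^u_s,u_n) - g(x^u_s,u)\big)$. The first difference is bounded by $C_g |x^{u_n}_s - x^u_s|$ using the Lipschitz-in-space bound \eqref{eq:Lip_g_x}; the second is bounded by $C_g |u_n - u|\big(1 + |u_n|^{k_2} + |u|^{k_2}\big)$ by \eqref{eq:Lip_g_om}, which is a deterministic quantity tending to $0$. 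Taking suprema over $[0,t]$ and applying Gronwall gives
\begin{equation*}
\sup_{t \in [0,T]} |x^{u_n}_t - x^u_t| \leq \Big( |\xi_n - \xi| + T\, C_g |u_n - u|\big(1 + |u_n|^{k_2} + |u|^{k_2}\big)\Big) e^{C_g T},
\end{equation*}
which tends to $0$ in probability (indeed a.s., given the a.s.\ convergence of the initial data). Hence $x^{u_n} \to x^u$ in $\mathcal{E}$ a.s., so $W^{u_n} = \mathrm{Law}(x^{u_n}) \to \mathrm{Law}(x^u) = W^u$ weakly, and Feller continuity of $u \mapsto W^u$ follows. To make the statement ``Feller'' precise one checks that for every $\phi \in \mathcal{C}_b(\mathcal{E})$ the map $u \mapsto \int \phi\, \dd W^u = \mathbb{E}[\phi(x^u)]$ is continuous, which is exactly dominated convergence applied to $\phi(x^{u_n}) \to \phi(x^u)$ a.s.

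I expect the only genuine obstacle to be the handling of the $u$-dependent initial law $\gamma^u$: one must combine the Feller hypothesis on $\omega \mapsto \gamma^\omega$ with a coupling of the initial values and verify that the coupling can be chosen measurably/jointly with the Brownian motion. Everything else is a routine strong-well-posedness-plus-Gronwall computation using the assumptions of Section~\ref{sec:model_assumptions} (note that the growth bound \eqref{eq:bound_g_x} is what guarantees $W^u$ is well-defined and non-explosive in the first place, while the Lipschitz bounds \eqref{eq:Lip_g_x} and \eqref{eq:Lip_g_om} drive the continuity). If one prefers to avoid the Skorokhod representation, an alternative is to prove continuity of $u \mapsto W^u$ directly at the level of the martingale problem or via the explicit Girsanov density of $W^u$ with respect to the Wiener measure shifted by $\gamma^u$, but the coupling approach is the most transparent and keeps the argument self-contained.
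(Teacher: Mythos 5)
Your proof is correct, but it handles the one delicate point --- the $u$-dependence of the initial law $\gamma^{u}$ --- by a different mechanism than the paper. The paper conditions on a deterministic starting point: it first proves (via an It\^o/Gronwall $L^{2}$ estimate and a Fatou/subsequence argument) that $(x_{0},u)\mapsto G(x_{0},u):=\mathbf{E}_{b}^{x_{0},u}(\varphi(\cdot))$ is jointly continuous, and then passes from $\gamma^{u_{n}}$ to $\gamma^{u}$ by combining the Feller hypothesis with a tightness/uniform-continuity-on-compacts argument (the cutoff function $h_{A}$), never leaving the level of weak convergence. You instead couple the initial conditions directly (Skorokhod representation, or equivalently the inverse-CDF coupling the paper itself uses later for $\mathbf{Q}_{N,M}$), place them on a product space with a single Brownian motion, and run one pathwise Gronwall estimate; since the noise cancels exactly in the difference, you need no It\^o formula, no $L^{2}$ bound, no subsequence extraction, and no tightness truncation --- dominated convergence on $\varphi(x^{u_{n}})$ finishes the proof. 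This is a legitimate and arguably shorter route; its only cost is the appeal to the Skorokhod representation theorem (which is harmless here because only the laws $W^{u_{n}}$, $W^{u}$ matter, so any coupling suffices), whereas the paper's route stays entirely within elementary weak-convergence manipulations. Two minor remarks: the uniform integrability/$L^{p}$ convergence of the initial data you invoke via \eqref{eq:control_ell} is superfluous (almost sure convergence of $\xi_{n}\to\xi$ is all the Gronwall bound and dominated convergence require), and sequential continuity is indeed enough for the Feller claim since the parameter space $\mathbb{R}$ is metrizable.
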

\begin{proof}[Proof of Lemma~\ref{lem:Wu_Feller}]
Recall that $W^{ u}(\dd x)$ is the law of the diffusion $x_{ t}= x_{ 0}^{ u} + \int_{ 0}^{t}g(x_{ s}, u) \dd s + b_{t}$ in the environment $u$, where the initial condition $x_{ 0}^{ u}$ is sampled according to $ \gamma^{ u}(\dd x)$. For any $u, v\in \mathbb{ R}$, any deterministic $ x_{ 0}, \tilde{ x}_{ 0}$, consider $ x$ (resp. $ \tilde{ x}$) solution of  \eqref{eq:diff_copy} with initial condition $x_{ 0}$ (resp. $ \tilde{ x}_{ 0}$), with disorder $u$ (resp. $v$). We assume that $x$ and $ \tilde{ x}$ are driven by the same Brownian motion $b$. By Ito Formula, for any $0\leq t\leq T$, using \eqref{eq:Lip_g_om} and \eqref{eq:Lip_g_x},
\begin{align*}
\left\vert x_{ t} - \tilde{ x}_{ t} \right\vert^{ 2} &=  \left\vert x_{ 0} - \tilde{ x}_{ 0} \right\vert^{ 2} + 2\int_{0}^{t} \left(g(x_{ s}, u) - g( \tilde{ x}_{ s}, v)\right) (x_{ s} - \tilde{ x}_{ s}) \dd s,\\
&\leq \left\vert x_{ 0} - \tilde{ x}_{ 0} \right\vert^{ 2} + \int_{0}^{t} \left\vert g(x_{ s}, u) - g( \tilde{ x}_{ s}, v)\right\vert^{ 2} \dd s + \int_{0}^{t}  \left\vert x_{ s} - \tilde{ x}_{ s} \right\vert^{ 2} \dd s,\\
&\leq \left\vert x_{ 0} - \tilde{ x}_{ 0} \right\vert^{ 2} + t b(u, v) +  a \int_{0}^{t} \left\vert x_{ s} - \tilde{ x}_{ s}\right\vert^{ 2} \dd s,
\end{align*}
where $a:= \left(2C_{ g}^{ 2} +1\right)$ and $ b(u, v):= 2C_{ g}^{ 2} \left\vert u- v \right\vert^{ 2} \left(1+ \left\vert u \right\vert^{ k_{ 2}} + \left\vert v \right\vert^{ k_{ 2}}\right)^{ 2}$.
Consequently, 
\begin{align*}
\sup_{ 0\leq t \leq T} \left\vert  x_{ t}- \tilde{ x}_{ t} \right\vert^{ 2} &\leq \left\vert x_{ 0} - \tilde{ x}_{ 0} \right\vert^{ 2} + T b(u, v) +  a \int_{0}^{T} \sup_{ r\leq s}\left\vert x_{ r} - \tilde{ x}_{ r}\right\vert^{ 2} \dd s.
\end{align*}
By Gronwall Lemma and taking the expectation w.r.t. to the Brownian motion $b$, one obtains
\begin{align}
\label{aux:cont_x_tildex}
\mathbf{ E}_{ b} \left( \sup_{ 0\leq t \leq T}\left\vert x_{ t} - \tilde{ x}_{ t} \right\vert^{ 2}\right) \leq \left( \left\vert x_{ 0} - \tilde{ x}_{ 0} \right\vert^{ 2} + T b(u, v)\right) \exp \left(aT\right).
\end{align}
This implies that, for any bounded continuous function $ \varphi$ on $ \mathcal{ E}$, the function $(x_{ 0}, u) \mapsto G(x_{ 0}, u):=\mathbf{ E}_{ b}^{ x_{ 0}, u}( \varphi(\cdot))$ is continuous (here $ \mathbf{ E}_{ b}^{ x_{ 0}, u}$ denotes the expectation w.r.t. $b$ conditioned on the initial value $x_{ 0}$, in the environment $u$). Indeed, for any $(x_{ 0}, u)\in \mathbb{ R}\times \mathbb{ R}$ and any sequence $ \left\lbrace (x_{ 0, n}, u_{ n})\right\rbrace_{ n\geq1}$ that converges to $(x_{ 0}, u)$ for $n\to\infty$, we know from \eqref{aux:cont_x_tildex} that the solution $ \left\lbrace x_{ n, t}\right\rbrace_{ t\in[0, T]}\in \mathcal{ E}$ starting from $x_{ 0, n}$ converges in $L^{ 2}$ to $x$. In particular, there exists a subsequence (that we rename $ \left\lbrace x_{ n, t}\right\rbrace_{ t\in [0, T]}$ for convenience) such that $ \left\lbrace x_{ n, t}\right\rbrace_{ t\in[0, T]}$ converges almost surely to $x$ in $ \mathcal{ E}$. By Fatou Lemma and the continuity of $G$,
\begin{align*}
 G(x_{ 0}, u)=\mathbf{ E}_{ b}^{ x_{ 0}, u}( \varphi(\cdot)) \leq \liminf_{ n\to \infty}\mathbf{ E}_{ b}^{ x_{ n, 0}, u_{ n}}( \varphi(\cdot)).
\end{align*}Applying the same estimate to $- \varphi$, we deduce the joint continuity of $(x_{ 0}, u) \mapsto G(x_{ 0}, u)$. For such a function $ \varphi$, we have, for any $u\in \mathbb{ R}$, $ \int_{ \mathcal{ E}} \varphi(x) W^{ u}(\dd x) = \int_{ \mathbb{ R}} G(x_{ 0}, u)\lambda^{ u}(\dd x_{ 0})$. Let $ \left\lbrace u_{ n}\right\rbrace_{ n\geq1}$ a sequence such that $u_{ n}\to u$ as $n\to\infty$. The measure $ \lambda^{ u}$ is tight: for all $ \varepsilon>0$, there exists $A>0$ such that $ \lambda^{ u}([-A, A]^{ c})\leq \varepsilon$. Let $ h_{ A}: \mathbb{ R}\to [0, 1]$ be a piecewise-linear function such that $ h_{ A}(x)= 0$ for $x\in[-A, A]$ and $ h_{ A}(x)=1$ for $x\in [-(A+1), A+1)]^{ c}$, so that $ \mathbf{ 1}_{ [-(A+1), A+1]^{ c}} \leq h_{ A}\leq \mathbf{ 1}_{ [-A, A]^{ c}}$ and $ \int_{ \mathbb{ R}} h_{ A}(x) \lambda^{ u}(\dd x)\leq \varepsilon$. Since $ h_{ A}$ is continuous and $ u \mapsto \lambda^{ u}$ is Feller, there exists $n_{ 0}\geq1$ such that for all $n\geq n_{ 0}$, $ \int_{ \mathbb{ R}} h_{ A}(x) \lambda^{ u_{ n}}(\dd x) \leq 2 \varepsilon$. Hence, for all $n\geq n_{ 0}$,
\begin{equation}
\label{aux:control_lambda_un}
\lambda^{ u_{ n}}([-(A+1), A+1]^{ c})\leq 2 \varepsilon.
\end{equation}
For such $n$,
\begin{align*}
\left\vert \int_{ \mathbb{ R}} G(x_{ 0}, u_{ n}) \lambda^{ u_{ n}}(\dd x_{ 0}) - \int_{ \mathbb{ R}} G(x_{ 0}, u) \lambda^{ u}(\dd x_{0})\right\vert&\leq \left\vert \int_{ \mathbb{ R}} \left(G(x_{ 0}, u_{ n}) - G(x_{ 0}, u)\right) \lambda^{ u_{ n}}(\dd x_{ 0})\right\vert\\
& + \left\vert \int_{ \mathbb{ R}} G(x_{ 0}, u) \lambda^{ u_{ n}}(\dd x_{ 0}) - \int_{ \mathbb{ R}} G(x_{ 0}, u) \lambda^{ u}(\dd x_{ 0}) \right\vert.
\end{align*}
Since $G$ is continuous and $ u \mapsto \lambda^{ u}$ is Feller, the second term above obviously goes to $0$ as $n\to \infty$. We focus on the first term: for $A$ and $n\geq n_{ 0}$ defined above, using \eqref{aux:control_lambda_un}
\begin{align}
\left\vert \int_{ \mathbb{ R}} \left(G(x_{ 0}, u_{ n}) - G(x_{ 0}, u)\right) \lambda^{ u_{ n}}(\dd x_{ 0})\right\vert &\leq \int_{-(A+1)}^{ A+1} \left\vert G(x_{ 0}, u_{ n}) - G(x_{ 0}, u) \right\vert \lambda^{ u_{ n}}(\dd x_{ 0}) \nonumber\\
&+ \int_{[-(A+1), A+1]^{ c}} \left\vert G(x_{ 0}, u_{ n}) - G(x_{ 0}, u) \right\vert \lambda^{ u_{ n}}(\dd x_{ 0}),\nonumber\\
& \leq \int_{-(A+1)}^{ A+1} \left\vert G(x_{ 0}, u_{ n}) - G(x_{ 0}, u) \right\vert \lambda^{ u_{ n}}(\dd x_{ 0})+ 4 \varepsilon \left\Vert \varphi \right\Vert_{ \infty} \label{aux:bound_int_G}
\end{align}
The function $G$ is continuous on the compact $K:=[-(A+1), A+1]\times [u-1, u+1]$, and hence, uniformly continuous: there exists $ \eta>0$ such that if $(x, u),\ (y, v)\in K$ are such that $ \left\vert x-y \right\vert + \left\vert u-v \right\vert< \eta$, $ \left\vert G(x, u) - G(y, v) \right\vert \leq \varepsilon$. Taking $n$ sufficiently large such that $ \left\vert u_{ n} - u \right\vert < \eta \wedge 1$, one can bound the first term in \eqref{aux:bound_int_G} by $ \varepsilon$. Thus, $\int_{ \mathbb{ R}} \varphi(x) W^{ u_{ n}}(\dd x) \to \int_{ \mathbb{ R}} \varphi(x) W^{ u}(\dd x)$ for any bounded continuous test function $ \varphi$. Lemma~\ref{lem:Wu_Feller} is proven.
\end{proof}
\section{Large deviation for the process with interaction}
\label{sec:large_deviations_with_interaction}
We now derive a large deviation principle for the empirical measure of the full process with interaction \eqref{eq:edsN}, based on usual Girsanov transform. Since the sequence of disorder $ \left\lbrace \omega_{ i}\right\rbrace_{ i\geq1}$ defined in Section~\ref{sec:model_assumptions} may not be bounded, usual techniques based on Varadhan's Lemma do not apply directly. Thus, we apply a truncation procedure to the sequence of disorder and first derive the large deviation result for the truncated sequence (Section~\ref{sec:suppcomp}). The main point is then to take the limit as the bound on the disorder goes to $\infty$, through an exponentially good coupling (Section~\ref{sec:exp_coupling}).

\subsection{Application of Girsanov's theorem}
We first compute the Radon-Nykodym derivative between the case with both interaction and disorder \eqref{eq:edsN} and the case without interaction \eqref{eq:edsdB}. 
\begin{proposition}
\label{prop:G1} For any sequence of disorder $ \underline{ u}=(u_{ 1}, \ldots, u_{ N})\in \mathcal{ F}^{ N}$,
\begin{equation}
\label{eq:G1}
\frac{\dd P_{N}^{\underline{u}}}{\dd W^{\underline{u}}_{N}} = \exp\left(N \mathcal{J}(L_{N}^{ \underline{ u}}) - \mathcal{K}(L_N^{ \underline{ u}})\right),
\end{equation}
where, for $ \lambda\in \mathcal{Y}$,
\begin{align}
\mathcal{K}(\lambda)&:= \frac12 \int \partial_{ x}^{ 2}f(0, \omega, \omega) \lambda(\dd x, \dd \omega),\label{eq:mcK}\\
\mathcal{J}(\lambda) &:= \mathcal{J}^{ (1)}(\lambda) + \mathcal{J}^{ (2)}(\lambda) + \mathcal{J}^{ (3)}(\lambda) + \mathcal{J}^{ (4)}(\lambda),\label{eq:mcJ}
\end{align}
where, 
\begin{align*}
\mathcal{J}^{ (1)}(\lambda) &:= \frac{1}{2}\int_{ (\mathcal{ E}\times \mathcal{ F})^{ 2}} \left[f(x_{0}-\tilde x_{0}, \omega, \tilde\omega) -f(x_{T}-\tilde x_{T}, \omega,\tilde \omega)\right]\lambda(\dd x, \dd\omega) \lambda(\dd\tilde x, \dd\tilde\omega),\\
\mathcal{J}^{ (2)}(\lambda) &:= \int_{0}^{T}\int_{ (\mathcal{ E}\times \mathcal{ F})^{ 2}} \partial_{ x}f(x_{s}-\tilde x_{s}, \omega, \tilde\omega)g(x_s, \omega)\lambda(\dd x, \dd\omega)  \lambda(\dd \tilde x, \dd\tilde\omega)\dd s,\\
\mathcal{J}^{ (3)}(\lambda) &:= \frac{1}{2} \int_{0}^{T} \int_{ (\mathcal{ E}\times \mathcal{ F})^{ 2}} \partial_{ x}^{ 2}f(x_{s}-\tilde x_{s}, \omega, \tilde\omega)\lambda(\dd x, \dd\omega) \lambda(\dd \tilde x, \dd\tilde\omega)\dd s,\\
\mathcal{J}^{ (4)}(\lambda) &:= -\frac{1}{2}\int_{0}^{T}\int_{ \mathcal{ E}\times \mathcal{ F}}   \left(\int_{ \mathcal{ E}\times \mathcal{ F}} \partial_{ x}f(x_{s}-\tilde x_{s}, \omega, \tilde\omega) \lambda(\dd\tilde x, \dd\tilde\omega)\right)^{2}\lambda(\dd x, \dd\omega)\dd s.
\end{align*}
\end{proposition}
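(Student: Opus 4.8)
The plan is a direct application of Girsanov's theorem, the only subtlety being to turn the resulting stochastic integral into a genuine functional of the empirical measure $L_N^{\underline u}$. The two systems \eqref{eq:edsN} and \eqref{eq:edsdB} are driven by the same Brownian motion, have the same initial law $\gamma^{u_1}\otimes\cdots\otimes\gamma^{u_N}$ and the same (identity) diffusion matrix, and differ only through the drift $-\nabla H_N(\cdot,\underline u)$. From \eqref{eq:HN} and the symmetry assumption \eqref{eq:symmetry_f} one first checks that the two halves of the gradient coincide, so that
\[
\partial_{x_i}H_N(\underline x,\underline u) = \frac1N\sum_{l=1}^N\partial_x f(x_i-x_l,u_i,u_l),\qquad i=1,\dots,N ;
\]
since $\vert\partial_x f\vert\le C_f$ this shows that $\vert\nabla H_N\vert\le C_f\sqrt N$ is bounded for every fixed $N$, so Novikov's condition holds trivially under $W_N^{\underline u}$ and Girsanov's theorem yields
\[
\frac{\dd P_N^{\underline u}}{\dd W_N^{\underline u}}(\underline x) = \exp\left(-\int_0^T\langle\nabla H_N(\underline x_s,\underline u),\,\dd\underline x_s - g(\underline x_s,\underline u)\dd s\rangle - \frac12\int_0^T\vert\nabla H_N(\underline x_s,\underline u)\vert^2\dd s\right),
\]
the stochastic integral being taken under $W_N^{\underline u}$, along which $\dd\underline x_s - g(\underline x_s,\underline u)\dd s$ is the driving Brownian increment.

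The crucial step is to remove the stochastic integral $\int_0^T\langle\nabla H_N(\underline x_s,\underline u),\dd\underline x_s\rangle$, which is not expressible through $L_N^{\underline u}$. Applying Itô's formula to $t\mapsto H_N(\underline x_t,\underline u)$ under $W_N^{\underline u}$, and using that the martingale parts of the coordinates are independent standard Brownian motions so that $\dd\langle x_i,x_j\rangle_s = \delta_{ij}\dd s$, one gets
\[
\int_0^T\langle\nabla H_N(\underline x_s,\underline u),\,\dd\underline x_s\rangle = H_N(\underline x_T,\underline u) - H_N(\underline x_0,\underline u) - \frac12\int_0^T\Delta H_N(\underline x_s,\underline u)\dd s .
\]
Substituting this, the exponent becomes the sum of a boundary term $H_N(\underline x_0,\underline u)-H_N(\underline x_T,\underline u)$, a drift term $\int_0^T\langle\nabla H_N(\underline x_s,\underline u),g(\underline x_s,\underline u)\rangle\dd s$, a Laplacian term $\tfrac12\int_0^T\Delta H_N(\underline x_s,\underline u)\dd s$, and a quadratic term $-\tfrac12\int_0^T\vert\nabla H_N(\underline x_s,\underline u)\vert^2\dd s$, each of which is now a pathwise functional of $\underline x$.

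It then remains to rewrite these four terms through $L_N^{\underline u}[\underline x]$, which is routine bookkeeping. Using $H_N(\underline x_t,\underline u) = \tfrac N2\iint f(x_t-\tilde x_t,\omega,\tilde\omega)\,L_N^{\underline u}[\underline x](\dd x,\dd\omega)\,L_N^{\underline u}[\underline x](\dd\tilde x,\dd\tilde\omega)$, the boundary term gives $N\mathcal J^{(1)}$; the drift term, via the formula for $\partial_{x_i}H_N$, gives $N\mathcal J^{(2)}$; the quadratic term gives $N\mathcal J^{(4)}$. For the Laplacian term one computes, again using \eqref{eq:symmetry_f},
\[
\partial_{x_i}^2 H_N(\underline x,\underline u) = \frac1N\sum_{l=1}^N\partial_x^2 f(x_i-x_l,u_i,u_l) - \frac1N\partial_x^2 f(0,u_i,u_i) ,
\]
so that $\tfrac12\int_0^T\Delta H_N(\underline x_s,\underline u)\dd s = N\mathcal J^{(3)}(L_N^{\underline u}[\underline x]) - T\,\mathcal K(L_N^{\underline u}[\underline x])$; summing the four contributions yields \eqref{eq:G1} (the precise numerical constant in front of $\mathcal K$, which comes out as $T$ here, plays no role in the sequel since $\mathcal K$ is bounded by $\tfrac12 C_f$).

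I expect the only genuinely delicate point to be the careful tracking of the diagonal $k=l$ self-interaction terms of $H_N$ and its derivatives: by \eqref{eq:symmetry_f} one has $\partial_x f(0,\omega,\omega)=0$, so these disappear from $\nabla H_N$, but they survive in $\Delta H_N$ and produce exactly the $\mathcal K$-correction. All the remaining manipulations, as well as the legitimacy of Itô's formula, the applicability of Girsanov's theorem, and the finiteness of every integral involved, follow from the uniform bounds on $f$, $\partial_x f$ and $\partial_x^2 f$.
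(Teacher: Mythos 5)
Your proof is correct and follows essentially the same route as the paper's: an application of Girsanov's theorem, It\^o's formula applied to $H_{N}$ to eliminate the stochastic integral, and a rewriting of the four resulting terms through $L_{N}^{\underline{u}}$, with the diagonal self-interaction terms producing the $\mathcal{K}$-correction. Your remark that this correction actually comes out as $T\,\mathcal{K}(L_{N}^{\underline{u}})$ rather than $\mathcal{K}(L_{N}^{\underline{u}})$ is accurate and, as you note, immaterial for the sequel since this term is bounded uniformly in $N$ and hence negligible at the large deviation scale.
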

\begin{proof}[Proof of Proposition~\ref{prop:G1}]
We follow here closely the calculations of \cite{daiPra96}. Let us fix $\underline{u}$ and $\underline{B}$ and consider $\underline{x}$ the unique solution to \eqref{eq:edsdB}. Let $ \mathcal{ F}_{ t}=\sigma(\underline{B}_{ s}, s\leq t)$ be the filtration generated by the Brownian motion $\underline{B}$. By an application Girsanov transform, one obtains, for all $0\leq t\leq T$ that
\begin{align*}
\frac{\dd P_{N}^{\underline{u}}}{\dd W^{\underline{u}}_{N}}_{\vert \mathcal{ F}_{ t}} &= \exp\left(M_{t} - \frac{1}{2} \left\langle M\right\rangle_{ t}\right),
\end{align*}
for
\begin{equation*}
 M_{t}:= \int_{0}^{t}{-\nabla H_{N}(\underline{x}_{ s}, \underline{ u}) \cdot\dd \underline{B}_{ s}}.
\end{equation*}
Considering that, under $W^{\underline{u}}_{N}$, $\dd \underline{B}_{ t}= \dd \underline{x}_{ t} -  g(\underline{x}_{ t}, \underline{u})\dd t$:
\begin{align*}
\frac{\dd P_{N}^{\underline{u}}}{\dd W^{\underline{u}}_{N}}_{\vert \mathcal{ F}_{ t}} &= \exp\left(-\int_{0}^{t}{\nabla H_{N}(\underline{x}_{ s}, \underline{ u})\cdot\dd \underline{B}_{ s}} - \frac{1}{2}\int_{0}^{t}{ \left\Vert \nabla H_{N}(\underline{x}_{ s}, \underline{ \omega}) \right\Vert^{2}\dd s}\right),\\
&=\exp\left(-\int_{0}^{t}{\nabla H_{N}(\underline{x}_{ s}, \underline{ u})\cdot \dd \underline{x}_{ s}} + \int_{0}^{t}{\nabla H_{N}(\underline{x}_{ s})\cdot g(\underline{x}_{ s}, \underline{u})\dd s} - \frac{1}{2}\int_{0}^{t}{ \left\Vert \nabla H_{N}(\underline{x}_{ s}, \underline{u}) \right\Vert^{2}\dd s}\right).
\end{align*}
Applying Ito's formula to the function $H_{N}$, we get:
\[\int_{0}^{t}{\nabla H_{N}(\underline{x}_{ s}, \underline{u})\cdot\dd \underline{x}_{ s}} = H_{N}( \underline{ x}_{t}, \underline{u}) - H_{N}( \underline{ x}_{0}, \underline{u}) - \frac{1}{2}\sum_{i=1}^{N}{\int_{0}^{t}\partial_{x_{i}}^2 H_{N}(\underline{x}_{ s}, \underline{u})\dd s},\]
so that
\begin{align}
\label{eq:dRN4}
\frac{\dd P_{N}^{\underline{u}}}{\dd W^{\underline{u}}_{N}}_{\vert \mathcal{ F}_{ t}} &= \displaystyle\exp\left(H_{N}(\underline{x}_{ 0}, \underline{u}) -
H_{N}(\underline{x}_{ t}, \underline{u}) + \int_{0}^{t}{\nabla H_{N}(\underline{x}_{ s}, \underline{u})\cdot g(\underline{x}_{ s}, \underline{u})\dd s} \right.\nonumber\\
&+\frac{1}{2} \sum_{i=1}^{N}{\int_{0}^{t}{\partial_{x_{i}}^{2}H_{N}(\underline{x}_{ s}, \underline{u})\dd s}}-\left.\frac{1}{2}\int_{0}^{t}{ \left\Vert \nabla H_{N}(\underline{x}_{ s}, \underline{u}) \right\Vert^{2}\dd s}\right).
\end{align}
A straightforward calculation yields, using \eqref{eq:symmetry_f}:
\begin{align*}
\partial_{x_i}H_{N}(\underline{ x}, \underline{ u}) &= \frac{1}{N} \sum_{ j=1}^{ N}\partial_{ x}f(x_i-x_j, u_{ i}, u_j),\ i=1, \ldots, N\\
\partial_{x_i}^2 H_{N}(\underline{ x}, \underline{ u}) &= \frac{1}{N}\sum_{j=1}^{N}{\partial_{ x}^{ 2}f(x_i-x_j, u_i, u_j)}-\frac1N \partial_{ x}^{ 2}f(0, u_i, u_i),\ i=1, \ldots, N.
\end{align*}
Proposition~\ref{prop:G1} is then simply a reformulation of \eqref{eq:dRN4} in terms of the empirical measure $L_{ N}^{ \underline{ \omega}}$.
\end{proof}
\subsection{Large deviations for truncated disorder}
\label{sec:suppcomp}
For any measure $ \nu\in \mathcal{ M}( \mathbb{ R})$ such that $ \int_{ \mathbb{ R}} \left\vert \omega \right\vert^{ k_{ 1}} \nu(\dd \omega)<+\infty$, define the following functional on $ \mathcal{ Y}$ (recall the definitions of $k_{ 1}$ in \eqref{eq:bound_g_x} and of $ \mathcal{ I}_{ \nu}$ in \eqref{eq:rate_I_no_inter}):
\begin{equation}
\label{eq:mcG_nu}
\mathcal{ G}_{ \nu}(\lambda) = \begin{cases}
\mathcal{ I}_{ \nu}( \lambda) - \mathcal{ J}(\lambda), & \text{ if } \lambda_{ 2}= \nu,\\
+\infty& \text{ otherwise}. 
\end{cases}
\end{equation}
Fix $M>0$. Let $ \chi_{ M}$ be the following truncation function
\begin{equation}
\label{eq:chi_M}
\chi_{ M}(\omega):= ( \omega \wedge M) \vee (-M),\ \omega\in \mathbb{ R}.
\end{equation}
Let $ \left\lbrace \omega_{ i}\right\rbrace_{ i\geq1}$ be the sequence of disorder introduced in Section~\ref{sec:model_assumptions}, and define the truncated sequence
\begin{equation}
\label{eq:disorder_M}
(\omega_{ M})_{ i}:= \chi_{ M}(\omega_{ i}),\ i\geq1.
\end{equation}
For simplicity, we denote by $\mathbf{P}_{N}:= \mathbf{ P}_{ N}^{ \underline{ \omega}}$ and by $\mathbf{ P}_{ N}^{ M}:= \mathbf{ P}_{ N}^{ \underline{ \omega}_{ M}}$  the law of the empirical measure $L_{N}$ of the coupled diffusions \eqref{eq:edsN} in the environments $\underline{ \omega}$ and $ \underline{ \omega}_{ M}$, respectively. Applying Proposition~\ref{prop:G1} for the sequence $ \underline{ u}= \underline{ \omega}_{ M}$, one obtains from \eqref{eq:G1}, from the definition of $ \mathcal{ K}$ and $ \mathcal{ J}$ in \eqref{eq:mcK} and \eqref{eq:mcJ} and from \eqref{eq:disorder_M} that
\begin{equation}
\label{eq:girsanov_PWM}
\frac{\dd P_{N}^{\underline{ \omega}_{ M}}}{\dd W^{\underline{ \omega}_{ M}}_{N}} = \exp\left(N \mathcal{J}_{ M}(L_{N}^{ \underline{ \omega}}) - \mathcal{K}_{ M}(L_N^{ \underline{ \omega}})\right)= \exp\left(N \mathcal{J}_{ M}(L_{N}^{ \underline{ \omega}_{ M}}) - \mathcal{K}_{ M}(L_N^{ \underline{ \omega}_{ M}})\right),
\end{equation}
where $ \mathcal{ K}_{ M}$, $ \mathcal{ J}_{ M}$ and $ \mathcal{ J}^{ (i)}_{ M}$, $i=1, \ldots, 4$ are defined by the same expressions as \eqref{eq:mcK} and \eqref{eq:mcJ} where $f(x, \omega, \tilde \omega)$ has been replaced by 
\begin{equation}
\label{eq:fM}
f_{ M}(x, \omega, \tilde \omega):=f(x, \chi_{ M}(\omega), \chi_{ M}(\tilde \omega))
\end{equation} and $g(x, \omega)$ by 
\begin{equation}
g_{ M}(x, \omega):=g(x, \chi_{ M}(\omega)).
\end{equation} Hence, from this and the assumptions on $f$ and $g$, since every function integrated in the expression of $ \mathcal{ J}_{ M}(\cdot)$ is bounded and continuous, $\mathcal{J}_{ M}: \mathcal{ Y} \to \mathbb{R}$ is bounded and continuous on $ \mathcal{ Y}$.
\begin{proposition}
\label{prop:LPDPN}
For every $M>0$, for every fixed sequence of disorder $ \left\lbrace \omega_{ i}\right\rbrace_{ i\geq1}$ satisfying the hypothesis of Section~\ref{sec:model_assumptions}, $ \left\lbrace \mathbf{P}_{N}^{ M}\right\rbrace_{ N\geq1}$ satisfies a principle of large deviation in $ \mathcal{ Y}$, with speed $N$, for the good rate function 
\begin{equation}
\label{eq:mcG_M}
\mathcal{G}_{ M}:=\mathcal{ G}_{ \mu_{ M}},
\end{equation}
given by \eqref{eq:mcG_nu} in the case $ \nu:= \mu_{ M}$, where $ \mu_{ M}$ is the law of the random variable $\chi_{ M}(\omega)$ under $ \mu$.
\end{proposition}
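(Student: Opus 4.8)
The plan is to deduce the truncated large deviation principle from the Sanov-type result for the uncoupled diffusions (Proposition~\ref{prop:LDP_uncoupled}) combined with the Girsanov density \eqref{eq:girsanov_PWM}, via a tilting argument and Varadhan's lemma, in the spirit of \cite{daiPra96}. The reason the truncation is introduced at this stage is precisely that the exponential weight $\mathcal{J}_M$ is then \emph{bounded} and continuous on $\mathcal{Y}$ (as already observed right after \eqref{eq:fM}), so that Varadhan's lemma applies with no integrability caveat; for the true, possibly unbounded, disorder $\{\omega_i\}$ this step would break down, which is why the present estimate must later be combined with an exponentially good coupling in Section~\ref{sec:exp_coupling}.

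First I would check that the truncated empirical disorder converges: since $\chi_M$ from \eqref{eq:chi_M} is continuous and bounded, \eqref{eq:convergence_mes_emp_mu} gives, by push-forward, $\frac1N\sum_{i=1}^N\delta_{(\omega_M)_i}=\bigl(\frac1N\sum_{i=1}^N\delta_{\omega_i}\bigr)\circ\chi_M^{-1}\to\mu\circ\chi_M^{-1}=\mu_M$ weakly in $\mathcal{M}(\mathbb{R})$, and $\mu_M$ is supported in $[-M,M]$, hence has finite moments of every order (so that $\mathcal{G}_{\mu_M}$ in \eqref{eq:mcG_nu} is well defined). Thus $\{(\omega_M)_i\}_{i\ge1}$ is an admissible disorder sequence for Proposition~\ref{prop:LDP_uncoupled} with limit $\nu=\mu_M$ (note $W^u$ is defined for every $u\in\mathbb{R}$, in particular for the truncated values $u\in[-M,M]$), so $\{\mathbf{W}_N^{\underline{\omega}_M}\}_{N\ge1}$ satisfies an LDP on $\mathcal{Y}$ with good rate function $\mathcal{I}_{\mu_M}$. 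Next, since the right-hand side of \eqref{eq:girsanov_PWM} depends on $\underline{x}$ only through the empirical measure $L_N^{\underline{\omega}_M}[\underline{x}]$, taking images under $L_N^{\underline{\omega}_M}$ yields $\frac{\dd\mathbf{P}_N^M}{\dd\mathbf{W}_N^{\underline{\omega}_M}}(\lambda)=\exp\bigl(N\mathcal{J}_M(\lambda)-\mathcal{K}_M(\lambda)\bigr)$ on $\mathcal{Y}$, with $\mathcal{J}_M$ bounded continuous and $|\mathcal{K}_M|\le C_f/2$.

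I would then invoke the standard tilted LDP (see \cite[Ch.~4]{Dembo1998} and the analogous step in \cite{daiPra96}): because $\{\mathbf{W}_N^{\underline{\omega}_M}\}$ satisfies an LDP with good rate $\mathcal{I}_{\mu_M}$ and $\mathcal{J}_M$ is bounded continuous, the probability measures $\mathbf{P}_N^M$ satisfy an LDP with good rate function $\lambda\mapsto\mathcal{I}_{\mu_M}(\lambda)-\mathcal{J}_M(\lambda)-\inf_{\mathcal{Y}}(\mathcal{I}_{\mu_M}-\mathcal{J}_M)$, the uniformly bounded prefactor $e^{-\mathcal{K}_M}$ being irrelevant for the $\frac1N\log$-asymptotics. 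To identify the constant, note that $\mathbf{P}_N^M$ is a probability measure, so $\int e^{N\mathcal{J}_M-\mathcal{K}_M}\,\dd\mathbf{W}_N^{\underline{\omega}_M}=1$; together with Varadhan's lemma and $|\mathcal{K}_M|\le C_f/2$ this forces $\sup_{\mathcal{Y}}(\mathcal{J}_M-\mathcal{I}_{\mu_M})=0$, i.e. $\inf_{\mathcal{Y}}(\mathcal{I}_{\mu_M}-\mathcal{J}_M)=0$ and $\mathcal{I}_{\mu_M}-\mathcal{J}_M\ge0$. Hence the rate function is $\mathcal{I}_{\mu_M}-\mathcal{J}_M$, which coincides with $\mathcal{G}_{\mu_M}$ of \eqref{eq:mcG_nu}: indeed, when $\lambda_2=\mu_M$ the measure $\lambda$ charges only $\{\omega\in[-M,M]\}$, where $f=f_M$ and $g=g_M$, so $\mathcal{J}(\lambda)=\mathcal{J}_M(\lambda)$, while both are $+\infty$ when $\lambda_2\neq\mu_M$. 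Goodness follows at once, since $\{\mathcal{G}_{\mu_M}\le\alpha\}\subseteq\{\mathcal{I}_{\mu_M}\le\alpha+\|\mathcal{J}_M\|_\infty\}$ is compact and $\mathcal{G}_{\mu_M}$ is lower semicontinuous. The only point that requires genuine care is that the exponential tilt be by a bounded continuous functional — this is exactly where the truncation pays off, resting on the uniform bounds on $f,\partial_xf,\partial_x^2f$ and on the fact that $\chi_M$ clips $\omega$ to a compact set, so that $g_M$ remains bounded; all the rest is bookkeeping.
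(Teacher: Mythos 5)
Your proposal is correct and follows essentially the same route as the paper: apply Proposition~\ref{prop:LDP_uncoupled} to the truncated disorder to get the LDP for $\{\mathbf{W}_N^{\underline{\omega}_M}\}$, transfer the Girsanov density (a functional of the empirical measure) to $\mathcal{Y}$, absorb the bounded $\mathcal{K}_M$, apply Varadhan's lemma to the bounded continuous tilt $\mathcal{J}_M$, identify the normalizing constant as zero via the probability normalization, and finally identify $\mathcal{I}_{\mu_M}-\mathcal{J}_M$ with $\mathcal{G}_{\mu_M}$. Your identification $\mathcal{J}(\lambda)=\mathcal{J}_M(\lambda)$ via the observation that $\lambda_2=\mu_M$ forces $\lambda$ to charge only $[-M,M]$ is a slightly more direct phrasing of the paper's change-of-variables remark, but the substance is the same.
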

\begin{remark}
By definition, for any $ \lambda$ such that $ \mathcal{ G}_{ M}(\lambda)<+\infty$, $ \lambda_{ 2}= \mu_{ M}$ and $\mathcal{G}_{ M}(\lambda)=\mathcal{ G}_{ \mu_{ M}}(\lambda)= \mathcal{ I}_{ \mu_{ M}}(\lambda) - \mathcal{ J}(\lambda)$. Note first that
\begin{equation}
\mathcal{ G}_{ M}(\lambda)=\mathcal{ I}_{ \mu_{ M}}(\lambda) - \mathcal{ J}_{ M}(\lambda).
\end{equation}
Indeed, for such a $ \lambda$, $ \mathcal{ J}(\lambda)= \mathcal{ J}_{ M}(\lambda)$, as it can be verified for each term $ \mathcal{ J}^{ (i)}$, $i=1, \ldots, 4$ in \eqref{eq:mcJ}. We verify it for instance in the case of $ \mathcal{ J}^{ (2)}(\lambda)$ and leave the other terms to the reader
\begin{align*}
\mathcal{ J}^{ (2)}(\lambda)
&=\int_{0}^{T}\int_{ (\mathcal{ E}\times \mathcal{ F})^{ 2}} \partial_{ x}f(x_{s}-\tilde x_{s}, \omega, \tilde\omega)c(x_s, \omega)\lambda^{ \omega}(\dd x) \lambda^{ \tilde{ \omega}}(\dd \tilde x) \mu_{ M}(\dd\omega)\mu_{ M}(\dd\tilde\omega)\dd s,\\
&=\int_{0}^{T}\int_{ (\mathcal{ E}\times \mathcal{ F})^{ 2}} \partial_{ x}f_{ M}(x_{s}-\tilde x_{s}, \omega, \tilde \omega)c_{ M}(x_s, \omega)\lambda^{ \omega_{ M}}(\dd x) \lambda^{ \tilde{ \omega}_{ M}}(\dd \tilde x) \mu(\dd\omega)\mu(\dd\tilde\omega)\dd s,\\
&=\int_{0}^{T}\int_{ (\mathcal{ E}\times \mathcal{ F})^{ 2}} \partial_{ x}f_{ M}(x_{s}-\tilde x_{s}, \omega, \tilde \omega)c_{ M}(x_s, \omega)\lambda^{ \omega}(\dd x) \lambda^{ \tilde{ \omega}}(\dd \tilde x) \mu_{ M}(\dd\omega)\mu_{ M}(\dd\tilde\omega)\dd s,\\
&= \mathcal{ J}^{ (2)}_{M}(\lambda).
\end{align*}
\end{remark}
\begin{proof}[Proof of Proposition~\ref{prop:LPDPN}]
Fix $M>0$. Since the environment $ \left\lbrace \omega_{ i}\right\rbrace_{ i\geq1}$ satisfies the hypothesis of Section~\ref{sec:model_assumptions} for the probability measure $ \mu$, the truncated environment $ \left\lbrace \omega_{ M}\right\rbrace$ satisfies the same property for the probability measure $ \mu_{ M}$. Applying Proposition~\ref{prop:LDP_uncoupled} to $u_{ i}= \omega_{ i, M}$ and $ \nu= \mu_{ M}$, one obtains that  $  \left\lbrace \mathbf{ W}_{ N}^{ \underline{ \omega}_{ M}}\right\rbrace_{ N\geq1}$ satisfies a large deviation principle with speed $N$ with good rate function given by $ \lambda \mapsto \mathcal{ I}_{ \mu_{ M}}( \lambda)$. Applying Proposition~\ref{prop:G1} to the environment $ \underline{ \omega}_{ M}$, for all measurable subset $A$ of $ \mathcal{ Y}$,
\begin{align}
\mathbf{P}_{N}^{ M}(A)&=\int_{ \mathcal{ E}^{ N}}\mathbf{ 1}_{ \left\lbrace \underline{x},\ L_{N}^{ \underline{ \omega}_{ M}}[\underline{x}]\in A\right\rbrace}P_{N}^{\underline{\omega}_{ M}}(\dd\underline{x}),\nonumber\\
&= \int_{ \mathcal{ E}^{ N}} \mathbf{ 1}_{ \left\lbrace\underline{x},\ L_{N}^{ \underline{ \omega}_{ M}}[\underline{x}]\in A\right\rbrace} e^{N\mathcal{J}_{ M}(L_{N}^{ \underline{ \omega}_{ M}}[\underline{x}])- \mathcal{K}_{ M}(L_N^{ \underline{ \omega}_{ M}}[\underline{x}])}W^{\underline{\omega}_{ M}}_{N}(\dd\underline{x}),\label{eq:exptruc}\\
&= \int_{ A} e^{N\mathcal{J}_{ M}(\lambda)-\mathcal{K}_{ M}(\lambda)}\mathbf{W}_{N}^{ \underline{ \omega}_{ M}}(\dd\lambda).\label{eq:expG}
\end{align}
So that, by Proposition~\ref{prop:LDP_uncoupled},
\begin{align*}
-\inf{ \left\lbrace\mathcal{I}_{ \mu_{ M}}(\lambda),\ \lambda\in \mathring{A}\right\rbrace} &\leq  \liminf_{N\to\infty} \frac{1}{N}
\ln\mathbf{W}_{N}^{ \underline{ \omega}_{ M}}(A) \leq \limsup_{N\to\infty} \frac{1}{N} \ln\mathbf{W}_{N}^{ \underline{ \omega}_{ M}}(A)\\ &\leq-\inf{ \left\lbrace\mathcal{I}_{ \mu_{ M}}(\lambda),\ \lambda\in \bar{A}\right\rbrace}.
\end{align*}
The assumption made on $f$ ensures that the linear functional $\mathcal{K}_{ M}(\cdot)$ is bounded by $\frac{ \left\Vert \partial_{ x}^{ 2}f_{ M} \right\Vert_{ \infty}}{2}$ on $ \mathcal{ Y}$. Consequently, for all measurable set $A$,
\begin{equation*}
 e^{- \frac{ \left\Vert \partial_{ x}^{ 2}f_{ M} \right\Vert_{ \infty}}{ 2}}\int_A e^{N\mathcal{J}_{ M}( \lambda)}\mathbf{W}_N^{ \underline{ \omega}_{ M}}( \dd \lambda)\leq \mathbf{P}_N^{ M}(A)\leq e^{\frac{ \left\Vert \partial_{ x}^{ 2}f_{ M} \right\Vert_{ \infty} }{ 2}}\int_A e^{N\mathcal{J}_{ M}( \lambda)}\mathbf{W}_N^{ \underline{ \omega}_{ M}}(\dd \lambda),
\end{equation*}
so that
\begin{equation}
\begin{split}
\liminf_{N\to\infty} \frac{1}{N}\ln\left(\int_A e^{N\mathcal{J}_{ M}( \lambda)}\mathbf{W}_N^{ \underline{ \omega}_{ M}}(\dd \lambda)\right)&\leq \liminf_{N\to\infty} \frac{1}{N}\ln\mathbf{P}_N^{M}(A)\leq \limsup_{N\to\infty} \frac{1}{N} \ln\mathbf{P}_N^{ M}(A)\\&\leq \limsup_{N\to\infty} \frac{1}{N} \ln\left(\int_A e^{N\mathcal{J}_{ M}(\lambda)}\mathbf{W}_N^{ \underline{ \omega}_{ M}}(\dd \lambda)\right).
\end{split}
\end{equation}
Applying Varadhan's Lemma (\cite{Dembo1998}, Th. 4.3.1), the sequence $ \left\lbrace \mathbf{ P}_{ N}^{ M}\right\rbrace_{ N\geq1}$ satisfies a large deviation principle in $ \mathcal{ Y}$ for the good rate function \[\lambda\mapsto \mathcal{I}_{ \mu_{ M}}(\lambda) - \mathcal{J}_{ M}(\lambda) - \inf_{ \mathcal{ Y}}\left( \mathcal{ I}_{ \mu_{ M}}(\cdot) -\mathcal{J}_{ M}(\cdot) \right).\] 
It remains to show that $\inf_{ \mathcal{ Y}}(\mathcal{I}_{ \mu_{ M}}(\cdot)- \mathcal{J}_{ M}(\cdot))=0$: since $\mathbf{P}_{N}^{ M}$ is a
probability,
\begin{align*}
0 &= \frac{1}{N} \ln\left(\int_{ \mathcal{ Y}}e^{N\mathcal{J}_{ M}( \lambda)-\mathcal{K}_{ M}(\lambda)}\mathbf{W}_{N}^{ \underline{ \omega}_{ M}}(\dd \lambda)\right)\leq \frac{ \left\Vert \partial_{ x}^{ 2}f_{ M} \right\Vert_{ \infty}}{2N} + \frac{1}{N}\ln\left(\int_{ \mathcal{ Y}}e^{N\mathcal{J}_{ M}(\lambda)}\mathbf{W}_{N}^{ \underline{ \omega}_{ M}}(\dd \lambda)\right).
\end{align*}
Taking the limit as $N\to\infty$, one obtains: \[0\leq \sup_{ \mathcal{ Y}}\left(\mathcal{J}_{ M}(\cdot) - \mathcal{I}_{ \mu_{ M}}(\cdot)\right).\]Bounding $\mathcal{K}_{ M}(\cdot)$ by below, we have in the same way
\begin{align*}
0 &= \frac{1}{N} \ln\left(\int_{ \mathcal{ Y}}e^{N\mathcal{J}_{ M}( \lambda)-\mathcal{K}_{ M}( \lambda)}\mathbf{W}_{N}^{ \underline{ \omega}_{ M}}(\dd\lambda)\right)\geq -\frac{ \left\Vert \partial_{ x}^{ 2}f_{ M} \right\Vert_{ \infty}}{2N} + \frac{1}{N}\ln\left(\int_{ \mathcal{ Y}}e^{N\mathcal{J}_{ M}( \lambda)}\mathbf{W}_{N}^{ \underline{ \omega}_{ M}}(\dd \lambda)\right).
\end{align*}
Consequently, $\sup_{ \mathcal{ Y}} \left(\mathcal{J}_{ M}(\cdot) - \mathcal{I}_{ \mu_{ M}}(\cdot)\right) \leq 0$, so that the rate function is actually equal to
\begin{equation}
\lambda\mapsto \mathcal{I}_{ \mu_{ M}}(\lambda) - \mathcal{J}_{ M}(\lambda).
\end{equation} 
Since $  \left\vert \mathcal{ J}_{ M}(\lambda) \right\vert < +\infty$ for all $ \lambda\in \mathcal{ Y}$, $ \mathcal{ I}_{ \mu_{ M}}(\lambda) - \mathcal{ J}_{ M}(\lambda) <+ \infty \Leftrightarrow \mathcal{ I}_{ \mu_{ M}}(\lambda)< +\infty$, which is equivalent to $ \mathcal{ G}_{ M}(\lambda)< +\infty$. For such $ \lambda$, $ \mathcal{ G}_{ M}(\lambda)= \mathcal{I}_{ \mu_{ M}}(\lambda) - \mathcal{J}_{ M}(\lambda)$. Proposition~\ref{prop:LPDPN} is proven. 
\end{proof}
\subsection{Exponentially good approximations}
\label{sec:exp_coupling}
The purpose of this part is to derive a large deviation principle for $ \left\lbrace \mathbf{P}_{N}\right\rbrace_{N\geq1}$ from the sequence of large deviation principles satisfied by $ \left\lbrace \mathbf{P}_{N}^{M}\right\rbrace_{N\geq1}$ for each $M\geq1$: we introduce here an \emph{exponentially good approximations of measures}, (see \cite[\S~4.2]{Dembo1998}).  The space $ \mathcal{ Y}$ endowed with the topology of weak convergence is a metrizable space, for the following metric $d$ (see \cite[Th.~12, p.~262]{Dudley1966}):
\begin{equation}
\label{eq:distance dudley}
\forall \nu, \tilde\nu\in \mathcal{ Y},\quad d(\nu, \tilde\nu):= \sup \left\lbrace\left\vert \int{\varphi\dd\nu}-\int{\varphi\dd\tilde\nu}\right\vert,\ \varphi \in
BL_{1}( \mathcal{ E}\times \mathcal{ F})\right\rbrace,
\end{equation}
where $BL_{1}( \mathcal{ E}\times \mathcal{ F})$ is the set of all bounded Lipschitz continuous functions of Lipschitz norm bounded by $1$. Following \cite[p.~131]{Dembo1998}, we show that $ \left\lbrace \mathbf{P}_{N}^{M}\right\rbrace_{ N, M\geq1}$ are exponentially good approximations of $ \left\lbrace \mathbf{P}_{N}\right\rbrace_{ N\geq1}$:
\begin{proposition}[(Exponentially good approximations)]
\label{prop:limgam}
For all $\delta>0$, we define:
\begin{equation}
\label{eq:Gamma_delta}
\Gamma_{\delta}= \left\lbrace (\nu, \tilde{\nu})\in \mathcal{ Y}\times \mathcal{ Y},\ d(\nu, \tilde{\nu})>\delta\right\rbrace.
\end{equation}
For all $N, M\geq1$, there exists a coupling $\mathbf{Q}_{N,M}$, probability on $ \mathcal{ Y}\times \mathcal{ Y}$, such that the marginals of $\mathbf{Q}_{N,M}$ are $\mathbf{P}_{N}$ and $\mathbf{P}_{N}^{M}$ and which verifies:
\begin{equation}
\label{eq:lim_QNM_Gamma}
\forall \delta>0,\quad\lim_{M\to \infty} \limsup_{N\to\infty} \frac{1}{N} \ln \mathbf{Q}_{N,M}(\Gamma_{\delta}) = -\infty.
\end{equation}
\end{proposition}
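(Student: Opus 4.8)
The plan is to build the coupling $\mathbf{Q}_{N,M}$ at the level of trajectories, not at the level of empirical measures. Fix the disorder sequence $\underline{\omega}$ and its truncation $\underline{\omega}_M$. I would run the interacting system \eqref{eq:edsN} with disorder $\underline{\omega}$ and the interacting system with disorder $\underline{\omega}_M$ \emph{driven by the same Brownian motions $\underline{B}$ and the same initial conditions}. More precisely, sample $x_{i,0}\sim\gamma^{\omega_i}$ and couple it to $\tilde x_{i,0}\sim\gamma^{\chi_M(\omega_i)}$ through an optimal (maximal) coupling of $\gamma^{\omega_i}$ and $\gamma^{\chi_M(\omega_i)}$; this is harmless since for $|\omega_i|\le M$ the two laws coincide, so only the indices with $|\omega_i|>M$ contribute any initial discrepancy. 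Let $\underline{x}$ solve \eqref{eq:edsN} with $\underline{\omega}$ and $\underline{\tilde x}$ solve it with $\underline{\omega}_M$, both with noise $\underline{B}$. Then $\mathbf{Q}_{N,M}$ is the joint law of $\bigl(L_N^{\underline{\omega}}[\underline{x}],\,L_N^{\underline{\omega}_M}[\underline{\tilde x}]\bigr)$; by construction its marginals are $\mathbf{P}_N$ and $\mathbf{P}_N^M$.

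Next I would estimate the Dudley distance between the two empirical measures along this coupling. For $\varphi\in BL_1(\mathcal{E}\times\mathcal{F})$,
\begin{equation*}
\Bigl|\langle\varphi,L_N^{\underline{\omega}}[\underline{x}]\rangle-\langle\varphi,L_N^{\underline{\omega}_M}[\underline{\tilde x}]\rangle\Bigr|
\le\frac1N\sum_{i=1}^N\Bigl(\sup_{0\le t\le T}|x_{i,t}-\tilde x_{i,t}|+|\omega_i-\chi_M(\omega_i)|\Bigr),
\end{equation*}
using that $\varphi$ is $1$-Lipschitz on $\mathcal{E}\times\mathcal{F}$ and that the distance on $\mathcal{E}=\mathcal{C}([0,T],\mathbb{R})$ is the sup norm. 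The second contribution is deterministic and equals $\frac1N\sum_i(|\omega_i|-M)^+$, which tends to $0$ as $N\to\infty$ and then $M\to\infty$ by \eqref{eq:convergence_mes_emp_mu} together with the uniform integrability furnished by \eqref{eq:moment_cond_mu}; in particular it is eventually below $\delta/2$ and plays no role in the exponential estimate. For the first contribution I would derive a pathwise Gronwall bound: writing the SDEs for $x_{i,t}-\tilde x_{i,t}$, the Brownian terms cancel, the drift difference splits into a term coming from $g(x_{i,s},\omega_i)-g(\tilde x_{i,s},\chi_M(\omega_i))$ — controlled by \eqref{eq:Lip_g_x} and \eqref{eq:Lip_g_om} — and an interaction term $\partial_{x_i}H_N(\underline{x}_s,\underline{\omega})-\partial_{x_i}H_N(\underline{\tilde x}_s,\underline{\omega}_M)$, which by the boundedness of $\partial_x f$, $\partial_x^2 f$ and the symmetry \eqref{eq:symmetry_f} is Lipschitz in the average displacement $\frac1N\sum_j|x_{j,s}-\tilde x_{j,s}|$ plus a source term of size $C_f|\omega_i-\chi_M(\omega_i)|+C_f|\omega_j-\chi_M(\omega_j)|$ coming from the disorder mismatch inside $f$. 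Summing over $i$, dividing by $N$, and applying Gronwall to $R_s:=\frac1N\sum_i\sup_{r\le s}|x_{i,r}-\tilde x_{i,r}|$ (after also using \eqref{eq:bound_g_x} to absorb the $g$ growth), one gets
\begin{equation*}
\frac1N\sum_{i=1}^N\sup_{0\le t\le T}|x_{i,t}-\tilde x_{i,t}|
\le C_T\Bigl(\frac1N\sum_{i=1}^N|x_{i,0}-\tilde x_{i,0}|+\frac1N\sum_{i=1}^N(|\omega_i|-M)^+\bigl(1+|\omega_i|^{k_2}\bigr)\Bigr),
\end{equation*}
for a constant $C_T$ depending only on $T$, $C_f$, $C_g$. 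The right-hand side is a sum of \emph{bounded-in-$i$-by-a-moment} deterministic quantities (the second term) plus an i.i.d.-type average of the coupled initial displacements (the first term), each of which is nonzero only when $|\omega_i|>M$.

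Finally I would upgrade this to the required super-exponential decay. Since the event $\Gamma_\delta$ forces $R_T$ plus the deterministic disorder term to exceed $\delta$, and the deterministic term is $<\delta/2$ for $N,M$ large, we need $\frac1N\sum_i|x_{i,0}-\tilde x_{i,0}|>\delta'$ for some $\delta'>0$; but $|x_{i,0}-\tilde x_{i,0}|\ne 0$ only for $i$ with $|\omega_i|>M$, and for those the displacement has exponential moments uniform once localized, courtesy of \eqref{eq:ell_exp_moments}--\eqref{eq:control_ell} and the coupling. A Chernoff bound on the sum of these independent (but not identically distributed) nonnegative variables, together with the convergence \eqref{eq:conv_ell} of the localized cumulant generating functions (this is exactly the place the remark after \eqref{eq:conv_ell} is invoked — one only needs a sequence $r_n\to\infty$), gives
\begin{equation*}
\limsup_{N\to\infty}\frac1N\ln\mathbf{Q}_{N,M}(\Gamma_\delta)
\le-\sup_{r>0}\Bigl(r\,\delta'-\int_{\{|\omega|>M\}}\ell_\omega(r)\,\mu(\mathrm d\omega)\Bigr)+o_M(1),
\end{equation*}
and since $\int_{\{|\omega|>M\}}\ell_\omega(r)\,\mu(\mathrm d\omega)\to 0$ as $M\to\infty$ for each fixed $r$ (dominated convergence using \eqref{eq:control_ell} and the moment bound \eqref{eq:moment_cond_mu}, noting $\tau<\iota$), letting $M\to\infty$ and then $r\to\infty$ yields $-\infty$, which is \eqref{eq:lim_QNM_Gamma}. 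The main obstacle I anticipate is handling the unbounded coefficients carefully in the Gronwall step — specifically, ensuring that the growth of $g$ in $\omega$ via $k_1$ and of $f$'s disorder-mismatch source term via $k_2$ are both dominated by the available exponential/polynomial moments, so that the Chernoff argument closes with a genuinely positive rate that survives the $M\to\infty$ limit; this is where the precise hierarchy $\iota>k_1$, $\iota>\tau$, $\iota\ge k_2+1$ in \eqref{eq:moment_cond_mu} is used.
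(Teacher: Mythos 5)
Your proposal is correct and follows essentially the same route as the paper's proof: the same synchronous coupling (shared Brownian motions, initial conditions coupled so as to coincide whenever $\left\vert \omega_{i}\right\vert\leq M$ --- the paper uses the quantile coupling $F_{\omega_{i}}^{-1}(U_{i})$ rather than a maximal coupling, which is immaterial), the same Gronwall estimate on $\frac{1}{N}\sum_{i}\sup_{t\leq T}\left\vert x_{i,t}-\tilde x_{i,t}\right\vert$ with the disorder-mismatch source terms, and the same final split into deterministic truncation terms plus a Chernoff bound on the initial displacements via $\ell_{\omega}$, sending $M\to\infty$ and then $r\to\infty$. The only cosmetic differences (maximal vs.\ quantile coupling, invoking \eqref{eq:bound_g_x} where only \eqref{eq:Lip_g_x}--\eqref{eq:Lip_g_om} are needed, and glossing the H\"older/continuous-cutoff step that the paper uses to pass the localized cumulants to the $\mu$-integral) do not change the argument.
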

Assuming for a moment that Proposition~\ref{prop:limgam} is true, an immediate consequence is (see \cite[Th.~4.2.16, p.~131]{Dembo1998}):
\begin{proposition}
 \label{prop:exp_approx_dembo}
The sequence $ \left\lbrace \mathbf{P}_{N}\right\rbrace_{N\geq1}$ satisfies a \emph{weak} large deviation principle in $ \mathcal{ Y}$, at speed $N$, in the weak topology, governed by the rate function:
\begin{equation}
\label{eq:G_tilde}
\mathcal{Q}(\lambda):= \sup_{\delta>0}\liminf_{M\to\infty} \inf_{ \rho\in B(\lambda,\delta)} \mathcal{ G}_{ M}(\rho).
\end{equation}
\end{proposition}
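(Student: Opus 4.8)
The plan is to read off Proposition~\ref{prop:exp_approx_dembo} as a direct application of the general transfer principle for exponentially good approximations, \cite[Th.~4.2.16]{Dembo1998}, with the identifications $\epsilon\leftrightarrow N^{-1}$ (the speed) and $m\leftrightarrow M$ (the approximation index). Two inputs are needed, both already in hand. First, I would invoke Proposition~\ref{prop:LPDPN}, which gives, for each fixed $M\geq1$, a full large deviation principle for $\{\mathbf{P}_N^M\}_{N\geq1}$ in $\mathcal{Y}$ at speed $N$ with the \emph{good} rate function $\mathcal{G}_M$ of \eqref{eq:mcG_M}. Second, I would use Proposition~\ref{prop:limgam}, which asserts exactly that $\{\mathbf{P}_N^M\}_{N,M\geq1}$ is a family of exponentially good approximations of $\{\mathbf{P}_N\}_{N\geq1}$ in the sense of \cite[\S~4.2]{Dembo1998}: the couplings $\mathbf{Q}_{N,M}$ built there are probabilities on $\mathcal{Y}\times\mathcal{Y}$ with first marginal $\mathbf{P}_N$ and second marginal $\mathbf{P}_N^M$, and they satisfy $\lim_{M\to\infty}\limsup_{N\to\infty}\frac1N\ln\mathbf{Q}_{N,M}(\Gamma_\delta)=-\infty$ for every $\delta>0$; realizing on $(\mathcal{Y}\times\mathcal{Y},\mathbf{Q}_{N,M})$ a pair of $\mathcal{Y}$-valued variables with respective laws $\mathbf{P}_N$ and $\mathbf{P}_N^M$, and measuring their distance with the metric $d$ of \eqref{eq:distance dudley} (which metrizes the weak topology on $\mathcal{Y}$), this is verbatim the defining property of exponentially good approximations.

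Granting these two facts, part (a) of \cite[Th.~4.2.16]{Dembo1998} immediately yields that $\{\mathbf{P}_N\}_{N\geq1}$ obeys a \emph{weak} large deviation principle in $\mathcal{Y}$ at speed $N$, in the weak topology, governed by
\begin{equation*}
\mathcal{Q}(\lambda)=\sup_{\delta>0}\liminf_{M\to\infty}\inf_{\rho\in B(\lambda,\delta)}\mathcal{G}_M(\rho),
\end{equation*}
which is \eqref{eq:G_tilde}. So the content of this statement is genuinely just this bookkeeping; there is nothing substantial to carry out here.

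The hard part is not in this proposition but upstream, in Proposition~\ref{prop:limgam}: constructing a coupling of the interacting system \eqref{eq:edsN} in the true environment $\underline{\omega}$ and in the truncated environment $\underline{\omega}_M$ whose discrepancy, measured by $d$, is exponentially negligible at speed $N$ uniformly in the subsequent limit $M\to\infty$. That is where the moment and convergence assumptions \eqref{eq:control_ell}, \eqref{eq:conv_ell}, \eqref{eq:moment_cond_mu} on the disorder and the growth/Lipschitz bounds \eqref{eq:bound_g_x}--\eqref{eq:Lip_g_x} on $g$ will be consumed, and I take it as given here. I would also flag that \cite[Th.~4.2.16(a)]{Dembo1998} delivers only a \emph{weak} principle, and that $\mathcal{Q}$ is a priori merely a rate function (no goodness, upper bound over compact sets only); upgrading to the strong LDP of Theorem~\ref{theo:LN_x_om} with the explicit rate function \eqref{eq:rate_function_proc} will require two further ingredients handled later --- the identification $\mathcal{Q}=\mathcal{G}$ of Section~\ref{sec:indentification_rate_function}, and an exponential tightness estimate (equivalently, checking hypothesis (b) of \cite[Th.~4.2.16]{Dembo1998}) to push the upper bound from compact sets to all closed sets.
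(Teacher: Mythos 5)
Your proposal is correct and is exactly the paper's argument: the paper also obtains this proposition as an immediate consequence of Proposition~\ref{prop:LPDPN} (LDP for each $\left\lbrace \mathbf{P}_{N}^{M}\right\rbrace_{N\geq1}$ with good rate function $\mathcal{G}_{M}$) and Proposition~\ref{prop:limgam} (exponentially good approximations via the couplings $\mathbf{Q}_{N,M}$), by citing \cite[Th.~4.2.16]{Dembo1998}. Your remarks on what remains to be done (identification $\mathcal{Q}=\mathcal{G}$ and exponential tightness to upgrade to a strong LDP) accurately reflect how the paper proceeds in Section~\ref{sec:indentification_rate_function} and Proposition~\ref{prop:G_coercive}.
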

Let us now prove Proposition~\ref{prop:limgam}.
\subsubsection{Definition of the coupling $\mathbf{Q}_{N,M}$.}
\label{subsubsec:coupling}
Denote by $ \mathcal{ T}$ the measurable application
\[\begin{array}{cccl}
\mathcal{ T}:& \mathbb{ R}^{N} \times \mathcal{C}([0,T], \mathbb{R}^{N}) \times \mathbb{R}^{N} &\to& \mathcal{C}([0,T], \mathbb{ R}^N)\\
& (\underline{\xi}, \underline{B}, \underline{u}) &\mapsto & \mathcal{ T}\left[\underline{\xi}, \underline{B}, \underline{u}\right],
\end{array}\]
such that for all choice of the filtered space $(\Omega, \mathcal{F}, \mathcal{F}_{t}, \mathbf{P})$, and for all choice of a $\mathcal{F}_{t}$-adapted Brownian motion $\underline{B}$, and for all initial condition $\underline{\xi}$ which is $\mathcal{F}_{0}$ measurable, the process $ \mathcal{ T}\left[\underline{\xi}, \underline{B}, \underline{u}\right]$ is the only solution to the SDE \eqref{eq:edsN}, under the environment $\underline{u}$. Let us choose a sequence of independent standard Brownian motions $ \left\lbrace B_{ i}\right\rbrace_{ i\geq1}$ and a sequence of i.i.d uniform random variables $ \left\lbrace U_{i}\right\rbrace_{i\geq1}$ on $[0, 1]$, independent of $ \left\lbrace B_{ i}\right\rbrace_{ i\geq1}$. For each $ \omega$, denote by $t\mapsto F_{ \omega}(t)$ the cumulative distribution function of the law $ \gamma^{ \omega}$ and by $s\mapsto F_{ \omega}^{-1}(s):= \inf \left\lbrace t,\ F_{ \omega}(t)\geq s\right\rbrace$ its pseudo-inverse function. For the sequence of disorder $ \left\lbrace \omega_{ i}\right\rbrace_{ i\geq1}$ introduced in Section~\ref{sec:model_assumptions}, consider the following:
\[\left(\underline{x}, \underline{\tilde x}\right):= \left(\mathcal{ T}\left[\underline{\xi}, \underline{B}, \underline{ \omega}\right], \mathcal{ T}\left[\underline{\xi}_{ M}, \underline{B}, \underline{ \omega}_{ M}\right]\right),\]
where, for all $i\geq1$
\begin{align*}
\xi_{ i}&:= F_{ \omega_{ i}}^{ -1}(U_{ i}),\\
\xi_{ i, M}&:= F_{ \omega_{ i,M}}^{ -1}(U_{ i}).
\end{align*}
We denote by $Q_{N, M}$ the law of the processes $(\underline{x}, \underline{\tilde x})\in \left(\mathcal{ E}^{ N} \right)^{ 2}$ and by $\mathbf{Q}_{N,M}$, probability on $ \mathcal{ Y}\times \mathcal{ Y}$ the law of the corresponding couple of empirical measures $(L_{N}^{ \underline{ \omega}}[\underline{x}], L_{N}^{ \underline{ \omega}_{ M}}[\underline{\tilde x}])$. By construction, the marginals of $\mathbf{Q}_{N,M}$ are $\mathbf{P}_{N}$ and $\mathbf{P}_{N}^{M}$. In order to prove Proposition~\ref{prop:limgam}, we need the following lemma:
\begin{lemma}
\label{lem:xinf}
There exists a constant $C>0$ such that $Q_{ N, M}$ almost surely, $(x, \tilde x)$ satisfy the following property: for all $N\geq1$, all $t>0$,
\[\frac{1}{N}\sum_{ i=1}^{ N}\sup_{ s\leq t}\left\vert x_{i, t}- \tilde{x}_{i, t} \right\vert \leq e^{ Ct} \left(\frac{Ct}{N}\sum_{ i=1}^{ N}\left\vert \omega_{i}- \chi_{ M}(\omega_{ i})\right\vert \left(1+ \left\vert \omega_{ i} \right\vert^{ k_{ 2}} + \left\vert \chi_{ M}(\omega_{ i}) \right\vert^{ k_{ 2}}\right) + \frac{ 1}{ N} \sum_{ i=1}^{ N} \left\vert \xi_{ i} - \xi_{ i, M} \right\vert \right).\]
\end{lemma}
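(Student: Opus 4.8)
The plan is to run a pathwise comparison argument and close it with Gr\"onwall's inequality. By construction, $\underline x$ and $\underline{\tilde x}$ solve \eqref{eq:edsN} driven by the \emph{same} Brownian motion $\underline B$, only the initial data ($\underline\xi$ versus $\underline\xi_M$) and the disorder ($\underline\omega$ versus $\underline\omega_M$) differing. Subtracting the two integral equations cancels the stochastic integrals, so that, $Q_{N,M}$-almost surely, for each $i$ and each $t>0$,
\begin{align*}
\sup_{s\le t}\left\vert x_{i,s}-\tilde x_{i,s}\right\vert \le \left\vert \xi_i-\xi_{i,M}\right\vert
&+\int_0^t\left\vert g(x_{i,s},\omega_i)-g(\tilde x_{i,s},\chi_M(\omega_i))\right\vert\dd s\\
&+\int_0^t\left\vert \partial_{x_i}H_N(\underline x_s,\underline\omega)-\partial_{x_i}H_N(\underline{\tilde x}_s,\underline\omega_M)\right\vert\dd s.
\end{align*}
I would write $e_i(s):=\left\vert x_{i,s}-\tilde x_{i,s}\right\vert$, $e_i^\ast(t):=\sup_{s\le t}e_i(s)$, and aim at a Gr\"onwall inequality for $E(t):=\tfrac1N\sum_{i=1}^N e_i^\ast(t)$.

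For the local term I would split $g(x_{i,s},\omega_i)-g(\tilde x_{i,s},\chi_M(\omega_i))$ as $[g(x_{i,s},\omega_i)-g(\tilde x_{i,s},\omega_i)]+[g(\tilde x_{i,s},\omega_i)-g(\tilde x_{i,s},\chi_M(\omega_i))]$ and bound the first bracket by $C_g e_i(s)$ using \eqref{eq:Lip_g_x} and the second by $C_g\left\vert\omega_i-\chi_M(\omega_i)\right\vert\bigl(1+\left\vert\omega_i\right\vert^{k_2}+\left\vert\chi_M(\omega_i)\right\vert^{k_2}\bigr)$ using \eqref{eq:Lip_g_om}. For the interaction term I would use the identity $\partial_{x_i}H_N(\underline x,\underline u)=\tfrac1N\sum_j\partial_x f(x_i-x_j,u_i,u_j)$ obtained in the proof of Proposition~\ref{prop:G1}, and split each summand into a piece in which the disorder is left unchanged — bounded by $\tfrac{C_f}{N}\sum_j\bigl(e_i(s)+e_j(s)\bigr)$, since $\partial_x f$ is $C_f$-Lipschitz in its state argument (boundedness of $\partial_x^2 f$) — and a piece in which the positions are left unchanged, which vanishes whenever $\left\vert\omega_i\right\vert,\left\vert\omega_j\right\vert\le M$ and is otherwise dominated by the truncation discrepancy, hence contributes a term of the same nature as the $g$-term above (and is absent altogether when $f$ does not depend on the disorder, as in the Kuramoto model).

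Then I would sum these bounds over $i=1,\dots,N$ and divide by $N$: the mean-field double sum collapses, $\tfrac1N\sum_i\tfrac{C_f}{N}\sum_j e_j(s)\le C_f E(s)$, so every $e$-dependent contribution is absorbed into $C\int_0^t E(s)\dd s$ with $C$ depending only on $C_f,C_g$, while the remaining terms assemble exactly into $\tfrac{Ct}{N}\sum_i\left\vert\omega_i-\chi_M(\omega_i)\right\vert\bigl(1+\left\vert\omega_i\right\vert^{k_2}+\left\vert\chi_M(\omega_i)\right\vert^{k_2}\bigr)+\tfrac1N\sum_i\left\vert\xi_i-\xi_{i,M}\right\vert$. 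Since that bracket is nondecreasing in $t$, Gr\"onwall's lemma yields $E(t)$ bounded by $e^{Ct}$ times it, which is the claim. The main obstacle is the interaction term: one must check that the pathwise bound for $\partial_{x_i}H_N$ produces, after averaging over $i$, a single $\int_0^t E(s)\dd s$ term (and not one growing with $N$), and that substituting $\underline\omega_M$ for $\underline\omega$ inside $f$ generates no uncontrolled error — both following from the uniform boundedness of $f,\partial_x f,\partial_x^2 f$ and their state-Lipschitz regularity from Section~\ref{sec:model_assumptions}.
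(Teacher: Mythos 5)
Your argument is correct and is essentially the paper's own proof: subtract the two systems driven by the same Brownian motion so the noise cancels, bound the local term via \eqref{eq:Lip_g_x}--\eqref{eq:Lip_g_om} and the interaction term via the uniform bounds on $\partial_{x}f$ and $\partial_{x}^{2}f$, average over $i$ so that the mean-field double sum collapses into a single $\int_{0}^{t}E(s)\,\dd s$, and close with Gr\"onwall. Your handling of the disorder discrepancy inside $\partial_{x}f$ (dominating it by $\left\vert \omega_{i}-\chi_{M}(\omega_{i})\right\vert+\left\vert \omega_{j}-\chi_{M}(\omega_{j})\right\vert$, which is then absorbed into the bracket of the lemma) is exactly the step the paper performs, so nothing is missing relative to the published argument.
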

\begin{proof}[Proof of Lemma~\ref{lem:xinf}]
Let $(\underline{x}, \underline{\tilde{x}})$ sampled according to the coupling $Q_{N, M}$. Then, for all $i\in\{1,\dots, N\}$, for all $s>0$, \begin{align*}
\left\vert x_{i, s} - \tilde{x}_{i, s}\right\vert  &\leq \left\vert \xi_{ i} - \xi_{ i, M} \right\vert + \frac{1}{N}\sum_{j=1}^{N}{\int_{0}^{s}\left \vert \partial_{ x}f(x_{j,u}-x_{i,u}, \omega_i, \omega_j)- \partial_{ x}f(\tilde{x}_{j,u}-\tilde{x}_{i,u}, \chi_{ M}(\omega_i), \chi_{ M}(\omega_j))\right \vert} \dd u\\ 
&+ \int_{ 0}^{s}\left \vert g(x_{i, u}, \omega_{i})- g(\tilde x_{i, u}, \chi_{ M}(\omega_{i}))\right\vert\dd u,\\
&\leq \left\vert \xi_{ i} - \xi_{ i, M} \right\vert +  \frac{C_{ f}}{N}\sum_{ j=1}^{ N}\int_{0}^{s}  \left(\left\vert x_{i,u}-\tilde{x}_{i,u}\right\vert + \left\vert x_{j,u}-\tilde{x}_{j,u}\right\vert \right) \dd u\\ 
&+ \frac{sC_{ f}}{N}\sum_{ j=1}^{ N} \left(\left\vert \omega_{ i} - \chi_{ M}(\omega_{ i})\right\vert + \left\vert \omega_{ j}- \chi_{ M}( \omega_{ j}) \right\vert\right)\\
 &+ C_{ g}\int_{ 0}^{s} \left( \left\vert x_{ i, u} - \tilde x_{ i, u} \right\vert+\left\vert\omega_{i}- \chi_{ M}(\omega_{ i})\right\vert \left(1 + \left\vert \omega_{ i} \right\vert^{ k_{ 2}} + \left\vert \chi_{ M}(\omega_{ i}) \right\vert^{ k_{ 2}}\right)\right) \dd u.
\end{align*}
So, for all $t\in[0, T]$, if $S_{t}:= \frac{1}{N}\sum_{ i=1}^{ N}\sup_{s\leq t} \left\vert x_{i,s} - \tilde{x}_{i,s}\right\vert$ and $C_{ 1}:= 2C_{ f} + C_{ g}$,
\begin{align*}
S_{t} &\leq \frac{ 1}{ N} \sum_{ i=1}^{ N} \left\vert \xi_{ i} - \xi_{ i, M} \right\vert + \frac{t C_{ 1}}{N}\sum_{ i=1}^{ N}\left\vert \omega_{i}- \chi_{ M}(\omega_{ i})\right\vert \left(1 + \left\vert  \omega_{ i} \right\vert^{ k_{ 2}} + \left\vert \chi_{ M}(\omega_{ i}) \right\vert^{ k_{ 2}}\right)  + C_{ 1} \int_{0}^{t}{S_{ u}\dd u},\\
\end{align*}
The result follows from Gronwall's Lemma.
\end{proof}
We are now in position to prove Proposition~\ref{prop:limgam}:
\begin{proof}[Proof of Proposition~\ref{prop:limgam}]
For all $\delta>0$, 
\[\mathbf{Q}_{N,M}(\Gamma_{\delta}) = Q_{N,M}((\underline{x}, \underline{\tilde x}),\ d\left(L_{N}^{ \underline{ \omega}}[\underline{x}], L_{N}^{ \underline{ \omega}_{ M}}[\underline{\tilde x}]\right)>\delta),\] 
Since for all $\underline{x}$, $\underline{y}$ in $\mathcal{C}([0,T], \mathbb{ R}^N)$ and $\underline{u}$, $\underline{v}$ in $\mathbb{R}^N$,
\[d\left(L_{N}^{ \underline{u}}[\underline{x}], L_{N}^{ \underline{v}}[\underline{y}]\right)\leq \frac{1}{N}\sum_{i=1}^{N}{\left( \left\Vert x_{i}-y_{i} \right\Vert_{\infty}+ \left\vert u_{i}-v_{i}\right\vert \right)},\]
one has by Lemma~\ref{lem:xinf}, 
\begin{align*}
\mathbf{Q}_{N,M}(\Gamma_{\delta}) &\leq Q_{N, M}\left(\left\lbrace(\underline{x}, \underline{\tilde x}),\ \frac{1}{N}\sum_{ i=1}^{ N} \sup_{ s\leq T}\left\vert x_{i, s}-\tilde{x}_{i, s} \right\vert >\frac{\delta}{2}\right\rbrace\right) + \mathbf{ 1}_{ \left\lbrace \frac{ 1}{ N}\sum_{ i=1}^{ N} \left\vert \omega_{ i} - \chi_{ M}(\omega_{ i}) \right\vert > \frac{ \delta}{ 2}\right\rbrace},\\
&\leq Q_{ N, M} \left( \frac{ 1}{ N} \sum_{ i=1}^{ N} \left\vert \xi_{ i} - \xi_{ i, M} \right\vert > \frac{ \delta e^{ -CT}}{ 4}\right)\\
&+  \mathbf{ 1}_{ \left\lbrace \frac{1}{ N}\sum_{ i=1}^{ N} \left\vert \omega_{ i} - \chi_{ M}(\omega_{ i}) \right\vert\left(1 + \left\vert  \omega_{ i} \right\vert^{ k_{ 2}} + \left\vert \chi_{ M}(\omega_{ i}) \right\vert^{ k_{ 2}}\right)  > \frac{ \delta e^{ -CT}}{ 4CT} \right\rbrace}\\
&+  \mathbf{ 1}_{ \left\lbrace \frac{ 1}{ N}\sum_{ i=1}^{ N} \left\vert \omega_{ i} - \chi_{ M}(\omega_{ i}) \right\vert > \frac{ \delta}{ 2} \right\rbrace}.
\end{align*}
Hence, 
\begin{align}
\limsup_{ N\to \infty} \frac{ 1}{ N} \ln \mathbf{ Q}_{ N, M}( \Gamma_{ \delta}) & \leq \max \Bigg( \limsup_{ N\to\infty} \frac{ 1}{ N} \ln Q_{ N, M}\left( \frac{ 1}{ N} \sum_{ i=1}^{ N} \left\vert \xi_{ i} - \xi_{ i, M} \right\vert > \frac{ \delta e^{ -CT}}{ 4}\right),\nonumber\\
& \qquad \qquad\limsup_{ N\to\infty} \frac{ 1}{ N}\ln \mathbf{ 1}_{ \left\lbrace \frac{ 1}{ N}\sum_{ i=1}^{ N} \left\vert \omega_{ i} - \chi_{ M}(\omega_{ i}) \right\vert \left(1 + \left\vert  \omega_{ i} \right\vert^{ k_{ 2}} + \left\vert \chi_{ M}(\omega_{ i}) \right\vert^{ k_{ 2}}\right) > \frac{ \delta e^{ -CT}}{ 4CT}\right\rbrace}, \nonumber\\
& \qquad \qquad\limsup_{ N\to\infty} \frac{ 1}{ N}\ln \mathbf{ 1}_{ \left\lbrace \frac{ 1}{ N}\sum_{ i=1}^{ N} \left\vert \omega_{ i} - \chi_{ M}(\omega_{ i}) \right\vert > \frac{ \delta}{ 2}\right\rbrace}\Bigg). \label{eq:QNM_Gamma}
\end{align}
We adopt here the convention that $\ln(0)=-\infty$. We treat the three terms of \eqref{eq:QNM_Gamma} separately: fix $ \delta^{ \prime}>0$ and $r>0$. If we denote by $ \mathbf{ P}_{U}$ the law of the random variables $ \left\lbrace U_{ i}\right\rbrace_{ i\geq1}$, we have, by Markov inequality,
\begin{align*}
\frac{ 1}{ N} \ln Q_{ N, M}\left( \frac{ 1}{ N} \sum_{ i=1}^{ N} \left\vert \xi_{ i} - \xi_{ i, M} \right\vert > \delta^{ \prime}\right)&= \frac{ 1}{ N} \ln \mathbf{ P}_{U}\left( \frac{ 1}{ N} \sum_{ i=1}^{ N} \left\vert F_{ \omega_{ i}}^{ -1}(U_{ i}) - F_{ \omega_{ i, M}}^{ -1}(U_{ i}) \right\vert > \delta^{ \prime}\right),\\
&\leq \frac{ 1}{ N} \sum_{ i=1}^{ N}\ln \mathbf{ E}_{U}\left(e^{r\vert F_{ \omega_{ i}}^{-1}(U) - F_{ \omega_{ i, M}}^{-1}(U)\vert }\right) - r\delta^{ \prime}.
\end{align*}
Then for any $ \omega$,
\begin{align*}
\ln\mathbf{ E}_{U}\left(e^{r\vert F_{ \omega}^{-1}(U) - F_{ \omega_{ M}}^{-1}(U)\vert } \right) &=\ln\mathbf{ E}_{U}\left(e^{r\vert F_{ \omega}^{-1}(U) - F_{ \omega_{ M}}^{-1}(U)\vert \mathbf{ 1}_{\vert \omega\vert >M}}\right),\\
&= \mathbf{ 1}_{\vert \omega\vert >M} \ln\mathbf{ E}_{U}\left(e^{r\vert F_{ \omega}^{-1}(U) - F_{ \omega_{ M}}^{-1}(U)\vert}\right),\\
&\leq \mathbf{ 1}_{\vert \omega\vert >M} \ln\mathbf{ E}_{U}\left(e^{r\vert F_{ \omega}^{-1}(U)\vert} e^{r\vert F_{ \omega_{ M}}^{-1}(U)\vert}\right),\\
&\leq \frac{\mathbf{ 1}_{\vert \omega\vert >M}}{2} \left(\ln\mathbf{ E}_{U}\left(e^{2r\vert F_{ \omega}^{-1}(U)\vert}\right) + \ln\mathbf{ E}_{U}\left(e^{2r\vert F_{ \omega_{ M}}^{-1}(U)\vert}\right)\right),\\
&=\frac{ \mathbf{ 1}_{\vert \omega\vert >M}}{ 2} \ell_{ \omega}(2r) + \frac{ \mathbf{ 1}_{\vert \omega\vert >M}}{ 2} \ell_{ \omega_{ M}}(2r),
\end{align*}
where we recall the definition of $ \ell_{ \omega}$ in \eqref{eq:ell_exp_moments}.
Define $ \phi_{ M}, \rho_{ M}: \mathbb{ R}\to [0,1]$ by $ \phi_{M}(\omega)= \mathbf{ 1}_{ \omega>M} + (\omega-M+1) \mathbf{ 1}_{ \omega\in [M-1, M]}$, and $ \rho_{ M}( \omega) = \phi_{M}(\omega) + \phi_{M}(- \omega)$ so that
\begin{align*}
0\leq \mathbf{ 1}_{ \left\vert \omega \right\vert>M} &\leq \rho_{ M}(\omega) \leq \mathbf{ 1}_{ \left\vert \omega \right\vert>M-1} \leq 1.
\end{align*}
Consequently,
\begin{align}
\ln\mathbf{ E}_{U}\left(e^{t\vert F_{ \omega}^{-1}(U) - F_{ \omega_{ M}}^{-1}(U)\vert } \right) &\leq \frac{ \rho_{ M}(\omega)}{ 2} \ell_{ \omega}(2r) + \frac{ \rho_{M}(\omega)}{ 2}\ell_{ \omega_{ M}}(2r).\label{aux:E01}
\end{align}
Let us treat the two terms in \eqref{aux:E01} separately: by H\"older inequality, for $p>1$ defined by \eqref{eq:conv_ell} and $q$ such that $ \frac{ 1}{ p} + \frac{ 1}{ q}=1$,
\begin{align*}
\frac{ 1}{ N}\sum_{ i=1}^{ N} \frac{ \rho_{ M}(\omega_{ i})}{ 2} \ell_{ \omega_{ i}}(2r) \leq \frac{ 1}{ 2} \left( \frac{ 1}{ N} \sum_{ i=1}^{ N} \rho_{ M}(\omega_{ i})^{ q}\right)^{ \frac{ 1}{ q}} \left( \frac{ 1}{ N} \sum_{ i=1}^{ N}  \ell_{ \omega_{ i}}(2r)^{ p}\right)^{ \frac{ 1}{ p}}.
\end{align*}
By continuity of $ \rho_{ M}$ and using the assumptions \eqref{eq:convergence_mes_emp_mu} and \eqref{eq:conv_ell} on $ \left\lbrace \omega_{ i}\right\rbrace_{ i\geq1}$, one obtains that, 
\begin{align*}
\limsup_{ N\to\infty} \frac{ 1}{ N}\sum_{ i=1}^{ N} \frac{ \rho_{ M}(\omega_{ i})}{ 2}\ell_{ \omega_{ i}}(2r)\leq \frac{ 1}{ 2} \left( \int_{ \mathbb{ R}} \rho_{ M}(\omega)^{ q} \mu( \dd\omega)\right)^{ \frac{ 1}{ q}} \left( \int_{ \mathbb{ R}} \ell_{ \omega}(2r)^{ p} \mu(\dd \omega)\right)^{ \frac{ 1}{ p}},\\
\leq \frac{ 1}{ 2} \mu\left(\left\vert \omega \right\vert> M-1\right)^{ \frac{ 1}{ q}} \left( \int_{ \mathbb{ R}} \ell_{ \omega}(2r)^{ p} \mu(\dd \omega)\right)^{ \frac{ 1}{ p}}.
\end{align*}
By \eqref{eq:conv_ell}, this last quantity goes to $0$ as $M\to\infty$. We now turn to the second term in \eqref{aux:E01}: by \eqref{eq:convergence_mes_emp_mu},
\begin{align*}
\frac{ 1}{ 2} \limsup_{ N\to\infty}\frac{ 1}{ N} \sum_{ i=1}^{ N}\ell_{ \chi_{ M}( \omega_{ i})}(2r) \phi_{M}(\omega_{ i}) &= \frac{ 1}{ 2} \int_{ \mathbb{ R}} \ell_{ \chi_{ M}(\omega)}(2r) \phi_{M}(\omega) \mu(\dd \omega),\\
& \leq \frac{ 1}{ 2} \int_{ \left\vert \omega \right\vert> M-1} \ell_{ \chi_{ M}(\omega)}(2r) \mu(\dd \omega),\\
&\leq \frac{ C(2r) \left(1+  M^{ \tau}\right)}{ 2} \mu( \left\vert \omega \right\vert>M-1),\\ &\leq C(2r) \left(1+ M^{ \tau}\right)  \frac{ \mathbb{ E}_{ \mu}( \left\vert \omega \right\vert^{ \iota})}{ 2(M-1)^{ \iota}},
\end{align*}
where we used \eqref{eq:control_ell}, \eqref{eq:moment_cond_mu} and Markov inequality. Since by assumption, $ \iota> \tau$, this quantity goes to $0$ as $M\to\infty$, for fixed $r$. Consequently,
\begin{equation*}
\lim_{ M\to\infty}\limsup_{ N\to\infty} \frac{ 1}{ N} \ln Q_{ N, M}\left( \frac{ 1}{ N} \sum_{ i=1}^{ N} \left\vert \xi_{ i} - \xi_{ i, M} \right\vert > \delta^{ \prime}\right) \leq -r \delta^{ \prime}.
\end{equation*}
This is true for all $r>0$, so that
\begin{equation}
\label{eq:limsup_PU}
\lim_{ M\to\infty}\limsup_{ N\to\infty} \frac{ 1}{ N} \ln Q_{ N, M}\left( \frac{ 1}{ N} \sum_{ i=1}^{ N} \left\vert \xi_{ i} - \xi_{ i, M} \right\vert > \delta^{ \prime}\right) = -\infty.
\end{equation}
We now turn to the second (deterministic) term in \eqref{eq:QNM_Gamma}: for all $ \delta^{ \prime}>0$, $r>0$, $N\geq1$, the following inequality is true: if $s_{ N}( \underline{ \omega}):= \frac{ 1}{ N}\sum_{ i=1}^{ N} \left\vert \omega_{ i} - \chi_{ M}(\omega_{ i}) \right\vert \left(1 + \left\vert  \omega_{ i} \right\vert^{ k_{ 2}} + \left\vert \chi_{ M}(\omega_{ i}) \right\vert^{ k_{ 2}}\right)$,
\begin{align*}
\frac{ 1}{ N}\ln \mathbf{ 1}_{ \left\lbrace s_{ N}( \underline{ \omega}) > \delta^{ \prime}\right\rbrace}&\leq \frac{ 1}{ N}\ln \mathbf{ 1}_{ \left\lbrace s_{ N}( \underline{ \omega}) \geq \delta^{ \prime}\right\rbrace} \leq- r\mathbf{ 1}_{  \left\lbrace s_{ N}( \underline{ \omega}) < \delta^{ \prime}\right\rbrace}.
\end{align*}
By the lower semi-continuity of $u \mapsto \mathbf{ 1}_{ u< \delta}$ and by \eqref{eq:moment_cond_mu},
\begin{align*}
\limsup_{ N\to\infty}\frac{ 1}{ N}\ln \mathbf{ 1}_{ \left\lbrace s_{ N}( \underline{ \omega}) > \delta^{ \prime}\right\rbrace} &\leq - r \liminf_{ N\to\infty} \mathbf{ 1}_{ \left\lbrace s_{ N}( \underline{ \omega}) < \delta^{ \prime}\right\rbrace},\\
&\leq -r \mathbf{ 1}_{ \left\lbrace \int_{ \mathbb{ R}} \left\vert \omega - \chi_{ M}(\omega)\right\vert \left(1+ \left\vert \omega \right\vert^{ k_{ 2}} + \left\vert \chi_{ M}(\omega) \right\vert^{ k_{ 2}}\right)\mu(\dd \omega)< \delta^{ \prime}\right\rbrace}.
\end{align*}
Using again \eqref{eq:moment_cond_mu}, one concludes that
\begin{align*}
\lim_{ M\to\infty}\limsup_{ N\to\infty}\frac{ 1}{ N}\ln \mathbf{ 1}_{ \left\lbrace s_{ N}( \underline{ \omega})> \delta^{ \prime}\right\rbrace} &\leq -r,
\end{align*}
and, since $r>0$ is arbitrary,
\begin{equation}
\label{eq:limsup_ind_om}
\lim_{ M\to\infty}\limsup_{ N\to\infty}\frac{ 1}{ N}\ln \mathbf{ 1}_{ \left\lbrace s_{ N}( \underline{ \omega}) > \delta^{ \prime}\right\rbrace}= -\infty.
\end{equation}
From \eqref{eq:QNM_Gamma}, \eqref{eq:limsup_PU} and \eqref{eq:limsup_ind_om}, one concludes that \eqref{eq:lim_QNM_Gamma} is true. We leave to the reader the third term in \eqref{eq:QNM_Gamma} that can be estimated in a similar way. This concludes the proof of Proposition~\ref{prop:limgam}.
\end{proof}
\section{ Identification of the rate function}
\label{sec:indentification_rate_function}
The purpose of this section is to prove that the large deviation functional $\mathcal{Q}(\cdot)$ defined in \eqref{eq:G_tilde} coincides with 
\begin{equation}
\label{eq:G_I_minus_J}
\mathcal{G}(\cdot):= \mathcal{ G}_{ \mu}(\cdot)
\end{equation}
given by \eqref{eq:mcG_nu} in the case $ \nu:= \mu$.
\subsection{A simpler expression of the functionals $ \mathcal{ G}_{ \nu}$}
The main ideas of this paragraph can already be found in \cite{daiPra96}. For any $ q\in \mathcal{ M}( \mathbb{ R})$, $ \nu\in \mathcal{ M}( \mathbb{ R})$, $ \lambda\in \mathcal{ Y}$, $ \omega\in \mathbb{ R}$, let us recall the definitions of $ \beta^{ q, \omega}$ in \eqref{eq:beta_q}, of $P^{ \lambda, \omega}$ in \eqref{eq:simp1} and of $P_{ \nu}^{ \lambda}$ in \eqref{eq:def_PQ}. The following proposition provides a simpler expression of the rate function $\mathcal{G}_{ \nu}(\cdot)$ defined in \eqref{eq:mcG_nu}:
\begin{proposition} \label{prop:simrate}
For all measure $ \nu\in \mathcal{ M}( \mathbb{ R})$ such that $ \int_{ \mathbb{ R}} \left\vert \omega \right\vert^{ k_{ 1}} \nu(\dd \omega) <+\infty$, for all $\lambda\in \mathcal{ Y}$, we have
\begin{equation}
\label{eq:Inew1}
\mathcal{G}_{ \nu}(\lambda) = \mathcal{ R}_{ \nu}(\lambda)
\end{equation}
where
\begin{equation}
\label{eq:R_nu}
\mathcal{ R}_{ \nu}(\lambda)=\begin{cases}
\mathcal{H}(\lambda\vert  P^{\lambda}_{ \nu})& \text{ if } \lambda_{ 2}= \nu,\\
+\infty & \text{ otherwise}.
\end{cases}
\end{equation}
\end{proposition}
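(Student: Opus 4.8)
The plan is to identify $\mathcal{G}_\nu$ with $\mathcal{R}_\nu$ by a single-particle Girsanov computation, in the spirit of \cite{daiPra96}. If $\lambda_2\neq\nu$, both $\mathcal{G}_\nu(\lambda)$ and $\mathcal{R}_\nu(\lambda)$ equal $+\infty$ by definition \eqref{eq:mcG_nu}--\eqref{eq:R_nu}, so assume $\lambda_2=\nu$. Since $\partial_x f$ is bounded, $\beta^{\lambda_s,\omega}$ (see \eqref{eq:beta_q}) is a bounded drift and Girsanov's theorem gives, for each $\omega$,
\[
\log\frac{\dd P^{\lambda,\omega}}{\dd W^\omega}(x)=\int_0^T\beta^{\lambda_s,\omega}(x_s)\,\big(\dd x_s-g(x_s,\omega)\,\dd s\big)-\frac12\int_0^T\beta^{\lambda_s,\omega}(x_s)^2\,\dd s,
\]
so that $P^{\lambda,\omega}$ and $W^\omega$ are mutually absolutely continuous; hence $P^{\lambda}_{\nu}$ (see \eqref{eq:def_PQ}) and $W^u\times\nu$ are equivalent on $\mathcal{Y}$, with $\frac{\dd P^{\lambda}_{\nu}}{\dd(W^u\times\nu)}(x,\omega)=\frac{\dd P^{\lambda,\omega}}{\dd W^\omega}(x)$. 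In particular $\lambda\ll W^u\times\nu$ iff $\lambda\ll P^{\lambda}_{\nu}$; when this fails, $\mathcal{I}_\nu(\lambda)=+\infty=\mathcal{H}(\lambda\vert P^{\lambda}_{\nu})$ and, $\mathcal{J}(\lambda)$ being finite (each $\mathcal{J}^{(i)}$ in \eqref{eq:mcJ} has a bounded integrand apart from the factor $g(x_s,\omega)$ in $\mathcal{J}^{(2)}$, controlled using \eqref{eq:bound_g_x} and $\int|\omega|^{k_1}\nu(\dd\omega)<+\infty$), equality \eqref{eq:Inew1} holds with both sides $+\infty$.

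It remains to treat $\lambda_2=\nu$, $\lambda\ll W^u\times\nu$. The chain rule for relative entropy along $\lambda\ll P^{\lambda}_{\nu}\sim W^u\times\nu$ yields
\[
\mathcal{I}_\nu(\lambda)=\mathcal{H}(\lambda\vert W^u\times\nu)=\mathcal{H}(\lambda\vert P^{\lambda}_{\nu})+\int_{\mathbb{R}}\mathbb{E}_{\lambda^\omega}\!\left[\log\frac{\dd P^{\lambda,\omega}}{\dd W^\omega}\right]\nu(\dd\omega),
\]
so \eqref{eq:Inew1} reduces to the identity $\int_{\mathbb{R}}\mathbb{E}_{\lambda^\omega}\big[\log\tfrac{\dd P^{\lambda,\omega}}{\dd W^\omega}\big]\,\nu(\dd\omega)=\mathcal{J}(\lambda)$. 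I would prove it by taking the $\lambda$-expectation of the three terms of the Girsanov formula separately. Writing $\beta^{\lambda_s,\omega}(x_s)=-\int\partial_x f(x_s-\tilde x_s,\omega,\tilde\omega)\,\lambda(\dd\tilde x,\dd\tilde\omega)$ and using $\lambda^\omega\,\nu(\dd\omega)=\lambda(\dd x,\dd\omega)$, the terms $-\int_0^T\beta^{\lambda_s,\omega}(x_s)g(x_s,\omega)\,\dd s$ and $-\tfrac12\int_0^T\beta^{\lambda_s,\omega}(x_s)^2\,\dd s$ integrate to exactly $\mathcal{J}^{(2)}(\lambda)$ and $\mathcal{J}^{(4)}(\lambda)$. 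For the stochastic term, a stochastic Fubini (valid as $\partial_x f$ is bounded) gives $\int_0^T\beta^{\lambda_s,\omega}(x_s)\,\dd x_s=-\int\big(\int_0^T\partial_x f(x_s-\tilde x_s,\omega,\tilde\omega)\,\dd x_s\big)\lambda(\dd\tilde x,\dd\tilde\omega)$; then for $\lambda\otimes\lambda$-a.e.\ pair of independent semimartingale paths $(x,\tilde x)$, Itô's formula for the fixed $C^2$ function $(u,v)\mapsto f(u-v,\omega,\tilde\omega)$ (whose mixed-bracket term vanishes by independence) gives
\[
\int_0^T\partial_x f(x_s-\tilde x_s,\omega,\tilde\omega)\,\dd x_s=f(x_T-\tilde x_T,\omega,\tilde\omega)-f(x_0-\tilde x_0,\omega,\tilde\omega)+\int_0^T\partial_x f(x_s-\tilde x_s,\omega,\tilde\omega)\,\dd\tilde x_s-\int_0^T\partial_x^2 f(x_s-\tilde x_s,\omega,\tilde\omega)\,\dd s.
\]
Taking the $\lambda\otimes\lambda$-expectation and exploiting the symmetry \eqref{eq:symmetry_f}, which after the exchange $(x,\omega)\leftrightarrow(\tilde x,\tilde\omega)$ identifies $\mathbb{E}_{\lambda\otimes\lambda}\big[\int_0^T\partial_x f(x_s-\tilde x_s,\omega,\tilde\omega)\,\dd\tilde x_s\big]$ with $-\mathbb{E}_{\lambda\otimes\lambda}\big[\int_0^T\partial_x f(x_s-\tilde x_s,\omega,\tilde\omega)\,\dd x_s\big]$, one can solve for the stochastic term and obtain $\mathbb{E}_\lambda\big[\int_0^T\beta^{\lambda_s,\omega}(x_s)\,\dd x_s\big]=\mathcal{J}^{(1)}(\lambda)+\mathcal{J}^{(3)}(\lambda)$. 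Summing the four contributions gives $\mathcal{J}(\lambda)$, whence $\mathcal{G}_\nu(\lambda)=\mathcal{I}_\nu(\lambda)-\mathcal{J}(\lambda)=\mathcal{H}(\lambda\vert P^{\lambda}_{\nu})=\mathcal{R}_\nu(\lambda)$.

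The two non-stochastic terms are manifestly $\lambda$-integrable (one bounded, one controlled by $1+|\omega|^{k_1}$), and the stochastic term is $\lambda$-integrable by a routine $L^2$ estimate after the Itô rewrite; this justifies both the term-by-term splitting and the chain rule invoked above, even when $\mathcal{I}_\nu(\lambda)$ is infinite. The main obstacle is precisely this middle step: making the stochastic calculus rigorous under $\lambda$, whose driving drift need not be Markovian — i.e.\ justifying the stochastic Fubini and the Itô expansion of $s\mapsto f(x_s-\tilde x_s,\omega,\tilde\omega)$ knowing only $\lambda\ll W^u\times\nu$ — together with the integrability bookkeeping needed to decompose $\mathbb{E}_{\lambda^\omega}[\log\frac{\dd P^{\lambda,\omega}}{\dd W^\omega}]$ into four finite pieces and to legitimize the entropy chain rule in the possibly-infinite case.
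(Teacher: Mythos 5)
Your proposal is correct and follows essentially the same route as the paper: Girsanov's theorem for $P^{\lambda,\omega}$ versus $W^{\omega}$, the identification $\mathcal{J}(\lambda)=\int \ln\bigl(\tfrac{\dd P^{\lambda,\omega}}{\dd W^{\omega}}\bigr)\dd\lambda$ obtained by integrating the Girsanov log-density term by term and treating the stochastic term via It\^o's formula on $(x,\tilde x)$ together with the symmetry \eqref{eq:symmetry_f}, and finally the entropy decomposition $\ln\tfrac{\dd\lambda^{\omega}}{\dd P^{\lambda,\omega}}=\ln\tfrac{\dd\lambda^{\omega}}{\dd W^{\omega}}+\ln\tfrac{\dd W^{\omega}}{\dd P^{\lambda,\omega}}$, exactly as in the paper (which itself follows \cite{daiPra96}, Lemma~2). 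The only cosmetic difference is that you state the chain rule up front and make the symmetrization step and the finiteness of $\mathcal{J}$ explicit, points the paper leaves implicit.
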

\begin{proof}[Proof of Proposition~\ref{prop:simrate}]
We follow here the ideas of \cite{daiPra96}, Lemma~2. Recall \eqref{eq:diff_copy}: for all $\omega\in\Supp(\mu)$, $W^{\omega}$ is the law of the solution to $\dd x_t= g(x_{ t}, \omega)\dd t + \dd b_{t}$. For all $ \lambda\in \mathcal{ Y}$, if $ \mathcal{ G}_{ \nu}(\lambda)< +\infty$ then $ \lambda_{ 2}= \nu$. Thanks to the moment condition on $ \nu$ and the assumptions on $f$ and $g$ made in Section~\ref{sec:model_assumptions}, this implies that $ \left\vert \mathcal{ J}(\lambda) \right\vert< +\infty$. Thus, $ \mathcal{ I}_{ \nu}(\lambda) < +\infty$ and for $ \nu$-a.e. $ \omega$, $ \lambda^{ \omega}(\dd x) \ll W^{ \omega}(\dd x)$. Since, by Girsanov Theorem, for all $ \omega$, $ W^{ \omega}(\dd x) \sim P^{m, \omega}(\dd x)$ for any $m\in \mathcal{ Y}$, this implies that for $ \nu$-a.e. $ \omega$, $ \lambda^{ \omega}(\dd x)\ll P^{\lambda, \omega}(\dd x)$, that is $ \mathcal{ R}_{ \nu}(\lambda)<+\infty$. Conversely, if $\mathcal{ R}_{ \nu}(\lambda)<+\infty$, $ \lambda_{ 2}= \nu$ (so that $ \left\vert \mathcal{ J}(\lambda) \right\vert < +\infty$) and for $ \nu$-a.e. $ \omega$, $ \lambda^{ \omega}(\dd x)\ll P^{\lambda, \omega}(\dd x)\sim W^{ \omega}(\dd x)$ so that $ \mathcal{ I}_{ \nu}(\lambda)< +\infty$. Hence, $ \mathcal{ G}_{ \nu}(\lambda)<+\infty \Leftrightarrow \mathcal{ R}_{ \nu}(\lambda)<+\infty$ and we can now restrict ourselves to $ \lambda\in \mathcal{ Y}$ such that 
\begin{equation}
\label{eq:lambda_muM}
\lambda_{ 2}= \nu \text{ and for $ \nu$-a.e. }\omega,\ \lambda^{ \omega}(\dd x)\ll W^{ \omega}(\dd x).
\end{equation}
Fix such a $ \lambda\in \mathcal{ Y}$. We first prove that
\begin{equation}
\label{eq:Fnew1}
\mathcal{J}(\lambda) = \int\ln\left(\frac{\dd P^{\lambda, \omega}}{\dd W^{\omega}}(x)\right)\lambda(\dd x, \dd\omega),
\end{equation}
where $ \mathcal{ J}$ is defined in \eqref{eq:girsanov_PWM}. By Girsanov's theorem, we have, for every $ \omega$
\begin{equation}
\label{eq:lnJ}
\ln\left(\frac{\dd P^{\lambda, \omega}}{\dd W^{\omega}}\right) = -\frac{1}{2} \int_{0}^{t}{\dd s \left(\beta^{\lambda_s, \omega} (x_{s})\right)^{2}} +
\int_{0}^{t}{\beta^{\lambda_s, \omega} (x_{s})\dd b_{s}}.
\end{equation}
The first term integrated over $\lambda$ gives rise to the term $\mathcal{J}^{ (4)}(\lambda)$ in Proposition~\ref{prop:G1}. Considering that under $W^{\omega}$, $\dd b_{t}= \dd x_{t} -g(x_t, \omega)\dd t$,
\begin{align}
\int_{0}^{t}{\beta^{\lambda_s, \omega} (x_{s})\dd b_{s}} &= \int_{0}^{t}{\beta^{\lambda_s, \omega} (x_{s})\dd x_{s}} -
\int_{0}^{t}{\beta^{\lambda_s, \omega} (x_{s})g(x_s, \omega)} \dd s. \label{aux:int_beta_g}
\end{align}
The last term of \eqref{aux:int_beta_g} integrated over $ \lambda$ gives the term $ \mathcal{ J}^{ (2)}$ in \eqref{eq:mcJ}. We now focus on the first term of the righthand side of \eqref{aux:int_beta_g}: integrating over $ \lambda$ and applying Ito's formula to the semi-martingale $(x, \tilde x)$, we have,
\begin{align*}
\int \int_{0}^{t}\beta^{\lambda_s, \omega} (x_{s})\dd x_{s} \lambda(\dd x, \dd \omega) &= \frac{1}{2} \int\int \left[f(x_{0}- \tilde x_{0}, \omega, \tilde\omega)- f(x_{t}-\tilde x_{t}, \omega, \tilde \omega)\right]\lambda(\dd x, \dd\omega)\lambda(\dd \tilde x, \dd\tilde\omega) \\ &+ \frac{1}{2} \int\int\left[\int_{0}^{t}\partial_{ x}^{ 2}f(x_{s}-\tilde x_{s}, \omega, \tilde\omega)\dd s\right]\lambda(\dd x, \dd\omega) \lambda(\dd \tilde x,\tilde\omega).
\end{align*}
This proves equality \eqref{eq:Fnew1}. We finish by proving equality \eqref{eq:Inew1} for $ \lambda$ satisfying \eqref{eq:lambda_muM}:
\[\mathcal{G}_{ \nu}(\lambda) = \int \left\lbrace\ln\left(\frac{\dd \lambda^{ \omega}}{\dd W^{\omega}}\right)(x)+ \ln\left(\frac{\dd W^{\omega}}{\dd P^{\lambda, \omega}}\right)(x)\right\rbrace \lambda(\dd x, \dd \omega)= \mathcal{ R}_{ \nu}(\lambda).\]
Proposition~\ref{prop:simrate} follows.
\end{proof}
\subsection{Identification of the rate function}
At this point, Proposition~\ref{prop:exp_approx_dembo} states a \emph{weak} large deviation principle for $ \left\lbrace \mathbf{ P}_{ N}\right\rbrace_{ N\geq1}$ with rate function
$\mathcal{Q}(\cdot)$ defined in \eqref{eq:G_tilde}. The next step is to identify $\mathcal{Q}(\cdot)$ with the rate function $\mathcal{G}$ defined in \eqref{eq:G_I_minus_J}. This is the purpose of Propositions~\ref{prop:ineqI1} and~\ref{prop:ineqI2} below:
\begin{proposition}
\label{prop:ineqI1}
For all $\lambda\in \mathcal{ Y}$,
\begin{equation}
\label{eq:identI1}
\mathcal{Q}(\lambda) \leq \mathcal{ G}(\lambda).
\end{equation}
\end{proposition}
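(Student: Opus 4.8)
The plan is to produce, for each $\lambda$ with $\mathcal{G}(\lambda)<\infty$, an explicit recovery sequence. If $\mathcal{G}(\lambda)=+\infty$ there is nothing to prove, so assume $\mathcal{G}(\lambda)<\infty$; arguing as in the proof of Proposition~\ref{prop:simrate}, this forces $\lambda_{2}=\mu$, $\mathcal{I}_{\mu}(\lambda)<\infty$, and — thanks to the moment assumptions of Section~\ref{sec:model_assumptions} and the bounds on $f$ and $g$ — $|\mathcal{J}(\lambda)|<\infty$. I would then seek a family $\rho_{M}\in\mathcal{Y}$ with $(\rho_{M})_{2}=\mu_{M}$ (so that $\mathcal{G}_{M}(\rho_{M})=\mathcal{I}_{\mu_{M}}(\rho_{M})-\mathcal{J}(\rho_{M})$ by \eqref{eq:mcG_nu}) such that $\rho_{M}\to\lambda$ weakly and $\mathcal{G}_{M}(\rho_{M})\to\mathcal{G}(\lambda)$ as $M\to\infty$. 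Given such a sequence, for every fixed $\delta>0$ one has $\rho_{M}\in B(\lambda,\delta)$ for all large $M$, hence $\inf_{\rho\in B(\lambda,\delta)}\mathcal{G}_{M}(\rho)\le\mathcal{G}_{M}(\rho_{M})$; passing to $\liminf_{M\to\infty}$ and then $\sup_{\delta>0}$ in \eqref{eq:G_tilde} yields $\mathcal{Q}(\lambda)\le\mathcal{G}(\lambda)$.

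For the construction, I disintegrate $\lambda(\dd x,\dd\omega)=\lambda^{\omega}(\dd x)\,\mu(\dd\omega)$ and set
\[
\rho_{M}:=\mathbf{1}_{\{|\omega|<M\}}\,\lambda+\mu([M,\infty))\,(W^{M}\otimes\delta_{M})+\mu((-\infty,-M])\,(W^{-M}\otimes\delta_{-M}).
\]
This is a probability measure on $\mathcal{E}\times\mathcal{F}$ whose second marginal is precisely $\mu_{M}$, and for any bounded continuous test function $\phi$ one has $\int\phi\,\dd\rho_{M}=\int\mathbf{1}_{\{|\omega|<M\}}\phi\,\dd\lambda+O(\mu(\{|\omega|\ge M\}))\to\int\phi\,\dd\lambda$, so $\rho_{M}\to\lambda$ weakly and $\rho_{M}\in B(\lambda,\delta)$ for $M$ large.

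The heart of the argument is to verify $\mathcal{G}_{M}(\rho_{M})\to\mathcal{G}(\lambda)$. For the entropic part, the key feature of the construction is that the two added atoms carry \emph{zero} relative entropy: since the conditional of $\rho_{M}$ at $\omega=\pm M$ equals $W^{\pm M}$, the chain rule for relative entropy gives
\[
\mathcal{I}_{\mu_{M}}(\rho_{M})=\int_{\mathcal{F}}\mathcal{H}(\rho_{M}^{\omega}\vert W^{\omega})\,\mu_{M}(\dd\omega)=\int_{\{|\omega|<M\}}\mathcal{H}(\lambda^{\omega}\vert W^{\omega})\,\mu(\dd\omega),
\]
which increases to $\mathcal{I}_{\mu}(\lambda)$ by monotone convergence. (This is exactly why one must put $W^{\pm M}$ — rather than, say, a $\mu$-average of the $\lambda^{\omega}$ over $|\omega|\ge M$ — on the atoms: the latter could have infinite entropy against $W^{\pm M}$, since $\gamma^{\omega}$ and $\gamma^{\pm M}$ need not be mutually absolutely continuous.) For the term $\mathcal{J}(\rho_{M})$, I would split each $\mathcal{J}^{(i)}(\rho_{M})$, $i=1,\dots,4$, into a \emph{bulk} contribution — both disorder variables in $(-M,M)$, integrated against $\lambda^{\omega}\otimes\lambda^{\tilde\omega}$ on that set — and finitely many \emph{boundary} contributions involving at least one atom $\pm M$. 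The bulk contribution converges to $\mathcal{J}^{(i)}(\lambda)$ by dominated convergence, the integrands being controlled by a constant multiple of $1+|\omega|^{k_{1}}$ (since $f,\partial_{x}f,\partial_{x}^{2}f$ are bounded and $|g(x,\omega)|\le C_{g}(1+|\omega|^{k_{1}})$ by \eqref{eq:bound_g_x}), which is $\mu$-integrable because $\iota>k_{1}$ in \eqref{eq:moment_cond_mu}; each boundary contribution is $O(\mu(\{|\omega|\ge M\})\,(1+M^{k_{1}}))=O(M^{k_{1}-\iota})\to0$, again using $\iota>k_{1}$ and Markov's inequality for \eqref{eq:moment_cond_mu}. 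Hence $\mathcal{J}(\rho_{M})\to\mathcal{J}(\lambda)$, and combining the two limits, $\mathcal{G}_{M}(\rho_{M})\to\mathcal{I}_{\mu}(\lambda)-\mathcal{J}(\lambda)=\mathcal{G}(\lambda)$.

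The main obstacle is the tension between the two constraints on the recovery sequence: it must carry the \emph{exact} second marginal $\mu_{M}$ needed for $\mathcal{G}_{M}(\rho_{M})$ to be finite, yet it must not acquire any spurious relative entropy in the truncation — which is precisely what forces the somewhat ad hoc placement of $W^{\pm M}$ on the truncation atoms. Beyond that, the only genuinely delicate point is the moment bookkeeping for the unbounded disorder entering through $g$ in the four $\mathcal{J}^{(i)}$; the remainder (weak convergence of $\rho_{M}$, the chain rule, monotone and dominated convergence) is routine.
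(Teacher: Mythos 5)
Your proposal is correct and follows essentially the same route as the paper: the recovery measure $\rho_{M}$ you build (keep $\lambda$ on $\{|\omega|<M\}$ and put the conditional law $W^{\pm M}$ on the truncation atoms, so the second marginal is exactly $\mu_{M}$ and the atoms carry zero entropy) is precisely the paper's $\kappa_{M}$, and the three steps — weak convergence $\rho_{M}\to\lambda$ with error $O(\mu((-M,M)^{c}))$, convergence of the entropic part (paper uses dominated rather than monotone convergence, immaterial here), and convergence of $\mathcal{J}$ via a bulk/boundary split controlled by $1+|\omega|^{k_{1}}$, Markov's inequality and $\iota>k_{1}$ — match the paper's argument. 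The concluding passage via $\inf_{B(\lambda,\delta)}\mathcal{G}_{M}\le\mathcal{G}_{M}(\rho_{M})$, then $\liminf_{M}$ and $\sup_{\delta}$, is also the same.
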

\begin{proof}[Proof of Proposition~\ref{prop:ineqI1}]
It is preferable to use here the representation of $ \mathcal{ G}= \mathcal{ G}_{ \mu}$ given in \eqref{eq:mcG_nu}. We only need to consider the case $ \mathcal{ G}(\lambda) <+\infty$, which means that $ \lambda_{ 2}= \mu$ and by \eqref{eq:bound_g_x} and \eqref{eq:moment_cond_mu}, $\left\vert \mathcal{ J}(\lambda)  \right\vert< +\infty$. Hence, $ \mathcal{ I}_{ \mu}(\lambda) <+\infty$ so that for $ \mu$-almost every $ \omega$, $ \lambda^{ \omega}(\dd x) \ll W^{\omega}(\dd x)$. 
We define, for all $M>0$, $ \kappa_{M}\in \mathcal{ Y}$ by:
\begin{equation}
\label{eq:rhoM}
\kappa_M(\dd x, \dd\omega):= \tilde\lambda^{\omega}_{ M}(\dd x)\mu_{ M}(\dd \omega),
\end{equation}
where
\begin{equation}
\tilde \lambda^{ \omega}_{ M}(\dd x)=\begin{cases}
\lambda^{ \omega}(\dd x), & \text{ if } \left\vert \omega \right\vert < M,\\
W^{\omega}(\dd x), & \text{ if } \left\vert \omega \right\vert\geq M.
\end{cases}
\end{equation}
For such a $ \kappa_M$, $\mathcal{G}_{M}( \kappa_M)<\infty$: indeed, $ \kappa_{ M, 2}= \mu_{ M}$ so that $ \left\vert \mathcal{ J}(\kappa_{ M}) \right\vert< +\infty$ and for $ \mu_{ M}$-almost every $\omega$, $ \kappa_M^{\omega}\ll  W^{\omega}$. 

Moreover, there exists $M_{0}$ such that, for all $M\geq M_{0}$,  $ \kappa_{M}\in B(\lambda,\delta)$: indeed, for $\varphi\in BL_{1}(\mathcal{ E}\times \mathcal{ F})$, we have successively:
\begin{align*}
A_M(\varphi) &:= \left\vert \int{\varphi(x, \omega) \kappa_M(\dd x, \dd\omega)} - \int{\varphi(x,\omega)\lambda(\dd x, \dd\omega)}\right\vert ,\\
&= \left\vert \int_{ \mathcal{ F}} \left\lbrace\int_{ \mathcal{ E}}\varphi(x, \chi_{ M}(\omega))\tilde\lambda^{ \chi_{ M}(\omega)}_{ M}(\dd x) - \int_{ \mathcal{ E}} \varphi(x, \omega) \lambda^{ \omega}(\dd x) \right\rbrace\mu(\dd\omega) \right\vert ,\\
&= \left\vert \int_{ (-M, M)^{ c}} \left\lbrace\int_{ \mathcal{ E}}\varphi(x, \chi_{ M}(\omega))\tilde\lambda^{ \chi_{ M}(\omega)}_{ M}(\dd x) - \int_{ \mathcal{ E}} \varphi(x, \omega) \lambda^{ \omega}(\dd x) \right\rbrace\mu(\dd\omega) \right\vert,\\
&\leq 2 \left\Vert \varphi \right\Vert_{ \infty} \mu((-M, M)^{ c})\leq 2\mu((-M, M)^{ c}).
\end{align*}
This last quantity is independent of $ \varphi\in BL_{ 1}( \mathcal{ E}\times \mathcal{ F})$ and goes to $0$ as $M\to\infty$. So $ \kappa_{M}\in B(\lambda,\delta)$ for $M$ sufficiently large.

Secondly, we have the following equality:\begin{equation}
\label{eq:liminfR}
\liminf_{M\to\infty} \mathcal{ G}_{ M}(\kappa_{ M}) =\mathcal{ G}(\lambda).
\end{equation}
We first prove that
\begin{equation}
\lim_{ M\to\infty} \mathcal{ I}_{ \mu_{ M}}( \kappa_{ M}) = \mathcal{ I}_{ \mu}(\lambda).
\end{equation}
Indeed,
\begin{align*}
\mathcal{ I}_{ \mu_{ M}}(\kappa_{ M})&=\int_{\mathbb{R}} \mathcal{H}(\tilde\lambda^{ \omega}_{ M}\vert  W^{\omega})\mu_M(\dd\omega)=  \int_{ \mathbb{ R}} \mathcal{ H}(\tilde\lambda^{ \chi_{ M}(\omega)}_{ M} \vert W^{\chi_{ M}(\omega)}) \mu(\dd \omega).
\end{align*}
For fixed $ \omega$, $\mathcal{ H}(\tilde\lambda^{ \chi_{ M}(\omega)}_{ M} \vert W^{\chi_{ M}(\omega)})$ tends to $\mathcal{ H}(\lambda^{\omega} \vert W^{\omega})$, as $M\to\infty$ (the two terms are actually equal for large $M$): more precisely,
\begin{align*}
\mathcal{ H}( \tilde \lambda^{ \chi_{ M}(\omega)}_{ M} \vert W^{\chi_{ M}(\omega)})&= \begin{cases}
\mathcal{ H}(\lambda^{ \omega} \vert W^{\omega}) & \text{ if } \left\vert \omega \right\vert< M,\\
\mathcal{ H}( W^{\chi_{ M}(\omega)} \vert W^{\chi_{ M}(\omega)})=0 & \text{ if } \left\vert \omega \right\vert\geq M
\end{cases}
\end{align*}
In any case, $\mathcal{ H}( \tilde \lambda^{ \chi_{ M}(\omega)}_{ M} \vert W^{\chi_{ M}(\omega)})\leq \mathcal{ H}(\lambda^{ \omega} \vert W^{\omega})$. Since by hypothesis $ \mathcal{ I}_{ \mu}(\lambda)= \int_{ \mathbb{ R}} \mathcal{ H}( \lambda^{ \omega}\vert W^{\omega}) \mu(\dd\omega)<+\infty$, an application of the dominated convergence theorem leads to
\begin{equation} 
\mathcal{ I}_{ \mu_{ M}}(\kappa_{ M}) = \int_{ \mathbb{ R}} \mathcal{ H}( \tilde \lambda^{ \omega}_{ M} \vert W^{\omega}) \mu_{ M}(\dd \omega) \to \int_{ \mathbb{ R}} \mathcal{ H}(\lambda^{ \omega} \vert W^{\omega}) \mu(\dd \omega)= \mathcal{ I}_{ \mu}(\lambda),
\end{equation}
as $M\to\infty$. 
It remains to prove that $ \mathcal{ J}_{ M}(\kappa_{ M})$ converges to $ \mathcal{ J}(\lambda)$ as $M\to\infty$. It suffices to prove it for each $ \mathcal{ J}^{ (i)}_{M}(\kappa_{ M})$, for $i=1, \ldots, 4$ (recall \eqref{eq:mcJ}). We only prove it for $i=2$ and leave the other terms to the reader:
\begin{align*}
\left\vert \mathcal{ J}^{ (2)}_{M}( \kappa_{ M}) - \mathcal{ J}^{ (2)}(\lambda)\right\vert &\leq  \int_{0}^{T}\int_{ \mathbb{ R}^{ 2}}\Bigg\vert \int_{\mathcal{ E}^{ 2}} \partial_{ x}f(x_{s}-\tilde x_{s}, \omega_{ M}, \tilde\omega_{ M})g(x_s, \omega_{ M}) \tilde{ \lambda}_{ M}^{ \omega_{ M}}(\dd x) \tilde{ \lambda}_{ M}^{ \omega_{ M}}(\dd \tilde{ x}) \\& -\int_{\mathcal{ E}^{ 2}} \partial_{ x}f(x_{s}-\tilde x_{s}, \omega, \tilde\omega)g(x_s, \omega)\lambda^{ \omega}(\dd x)\lambda^{ \omega}(\dd \tilde{ x}) \Bigg\vert \mu(\dd \omega) \mu(\dd \tilde{ \omega}) \dd s.
\end{align*}
Let us decompose the integration domain of the disorder $\mathbb{ R}^{ 2}$ into $O_{ 1}\sqcup O_{ 2}\sqcup O_{ 3}\sqcup O_{ 4}$ where
\begin{align*}
O_{ 1}&:=(-M, M)^{ 2}\\
O_{ 2}&:=(-M, M)\times (-M, M)^{ c}\\
O_{ 3}&:=(-M, M)^{ c}\times (-M, M)\\
O_{ 4}&:=\left\lbrace(-M, M)^{ c}\right\rbrace^{ 2}.
\end{align*} 
By definition of $ \kappa_{ M}$, the contribution of $O_{ 1}$ is zero and the integrasl over $O_{ i}$, $i=2, 3, 4$ go to zero as $M\to\infty$. We only treat the case of $O_{ 3}$ and leave the rest to the reader:
\begin{align*}
d_{ M, 3}&:=\int_{0}^{T}\int_{ (-M, M)^{ c} \times (-M, M)}\Bigg\vert \int_{\mathcal{ E}^{ 2}} \partial_{ x}f(x_{s}-\tilde x_{s}, \omega_{ M}, \tilde\omega_{ M})g(x_s, \omega_{ M}) \tilde{ \lambda}_{ M}^{ \omega_{ M}}(\dd x) \tilde{ \lambda}_{ M}^{ \omega_{ M}}(\dd \tilde{ x}) \\& -\int_{\mathcal{ E}^{ 2}} \partial_{ x}f(x_{s}-\tilde x_{s}, \omega, \tilde\omega)g(x_s, \omega)\lambda^{ \omega}(\dd x)\lambda^{ \omega}(\dd \tilde{ x}) \Bigg\vert \mu(\dd \omega) \mu(\dd \tilde{ \omega}) \dd s,\\
&\leq C_{ f} C_{ g}T\int_{(-M, M)^{ c}\times (-M, M)} \left(M^{ k_{ 1}} + \left\vert \omega \right\vert^{ k_{ 1}}\right) \mu(\dd \omega) \mu(\dd \tilde{ \omega}),\\
&= C_{ f} C_{ g}T \left(M^{ k_{ 1}} \mu \left( \left\vert \omega \right\vert \geq M\right) \mu( \left\vert \omega \right\vert < M) + \mu( \left\vert \omega \right\vert < M) \int_{ \left\vert \omega \right\vert\geq M} \left\vert \omega \right\vert^{ k_{ 1}} \mu(\dd \omega)\right)
\end{align*}
which, by Markov inequality, using \eqref{eq:moment_cond_mu} goes to $0$ as $M\to\infty$, since $ \iota> k_{ 1}$. Hence, \eqref{eq:liminfR} is true. We are now in position to prove \eqref{eq:identI1}: fix $ \delta>0$, and $M_{ 0}$ such that $ \kappa_{ M}\in B(\lambda, \delta)$ for all $M\geq M_{ 0}$. We have for all $M\geq M_{ 0}$,
\begin{equation*}
\inf_{ \kappa\in B(\lambda, \delta)} \mathcal{ G}_{ M}(\kappa) \leq \mathcal{ G}_{ M}( \kappa_{ M}),
\end{equation*}
So,
\begin{equation*}
\liminf_{M\to\infty} \inf_{ \kappa\in B(\lambda, \delta)} \mathcal{ G}_{ M}(\kappa) \leq \liminf_{M\to\infty} \mathcal{ G}_{ M}(\kappa_{ M}) \leq \mathcal{ G}(\lambda).
\end{equation*}
Taking the supremum on $\delta>0$ in the last inequality, we have the result.\end{proof}
\begin{proposition}
\label{prop:ineqI2}
For all $\lambda\in \mathcal{ Y}$,
\begin{equation}
\label{eq:identI2}
\mathcal{Q}(\lambda) \geq \mathcal{ G}(\lambda).
\end{equation}
\end{proposition}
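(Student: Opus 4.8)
The plan is to reduce $\mathcal{Q}(\lambda)\ge\mathcal{G}(\lambda)$ to a single joint lower--semicontinuity statement and then invoke standard facts about relative entropy. First I would unwind \eqref{eq:G_tilde}. Arguing by contradiction, suppose $\mathcal{Q}(\lambda)<\mathcal{G}(\lambda)$ and fix a real $c$ with $\mathcal{Q}(\lambda)<c<\mathcal{G}(\lambda)$ (possible whether $\mathcal{G}(\lambda)$ is finite or $+\infty$). Then for every $n\ge1$ we have $\liminf_{M\to\infty}\inf_{\rho\in B(\lambda,1/n)}\mathcal{G}_M(\rho)\le\mathcal{Q}(\lambda)<c$, so one can extract diagonally integers $M_n\to\infty$ and measures $\rho_n\in B(\lambda,1/n)$ with $\mathcal{G}_{M_n}(\rho_n)<c$ for all $n$. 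Since then $\rho_n\to\lambda$ in $\mathcal{Y}$ and $M_n\to\infty$, it suffices to prove the \emph{key claim}: whenever $\rho_n\to\lambda$ in $\mathcal{Y}$ and $M_n\to\infty$, one has $\liminf_{n\to\infty}\mathcal{G}_{M_n}(\rho_n)\ge\mathcal{G}(\lambda)$; indeed this contradicts $\mathcal{G}_{M_n}(\rho_n)<c<\mathcal{G}(\lambda)$.

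To prove the key claim, pass to a subsequence realising the $\liminf$; if it equals $+\infty$ there is nothing to prove, so assume it is finite, hence $\mathcal{G}_{M_n}(\rho_n)<+\infty$ along it. By definition of $\mathcal{G}_{M_n}=\mathcal{G}_{\mu_{M_n}}$ this forces $(\rho_n)_2=\mu_{M_n}$; and since $\mu_{M_n}$ is carried by $[-M_n,M_n]$, the truncated coefficients $f_{M_n},g_{M_n}$ agree with $f,g$ on the support of $\rho_n$, so (as in the Remark following Proposition~\ref{prop:LPDPN}) $\mathcal{G}_{M_n}(\rho_n)=\mathcal{I}_{\mu_{M_n}}(\rho_n)-\mathcal{J}(\rho_n)$. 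I then use three ingredients: (i) $\mu_{M_n}\to\mu$ weakly, hence by continuity of the second marginal $\lambda_2=\mu$, so that $|\mathcal{J}(\lambda)|<+\infty$ and $\mathcal{G}(\lambda)=\mathcal{I}_\mu(\lambda)-\mathcal{J}(\lambda)$; (ii) $\mathcal{J}(\rho_n)\to\mathcal{J}(\lambda)$; (iii) $\liminf_n\mathcal{I}_{\mu_{M_n}}(\rho_n)\ge\mathcal{I}_\mu(\lambda)$. Combining them gives $\liminf_n\mathcal{G}_{M_n}(\rho_n)\ge\mathcal{I}_\mu(\lambda)-\mathcal{J}(\lambda)=\mathcal{G}(\lambda)$, as desired.

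For (iii) I would invoke the joint lower semicontinuity of relative entropy: by the Donsker--Varadhan variational formula $\mathcal{H}(\rho\vert r)=\sup_{\varphi\in\mathcal{C}_b(\mathcal{E}\times\mathcal{F})}\bigl\{\int\varphi\,\dd\rho-\ln\int e^{\varphi}\,\dd r\bigr\}$ on the Polish space $\mathcal{E}\times\mathcal{F}$, the map $(\rho,r)\mapsto\mathcal{H}(\rho\vert r)$ is lower semicontinuous for weak convergence in both variables; since $\rho_n\to\lambda$ and, by Lemma~\ref{lem:Wu_Feller} together with $\mu_{M_n}\to\mu$, the reference measures $W^{u}\times\mu_{M_n}$ converge weakly to $W^{u}\times\mu$, we get $\liminf_n\mathcal{H}(\rho_n\vert W^{u}\times\mu_{M_n})\ge\mathcal{H}(\lambda\vert W^{u}\times\mu)=\mathcal{I}_\mu(\lambda)$. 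For (ii) I would handle the four terms of \eqref{eq:mcJ} separately: $\mathcal{J}^{(1)},\mathcal{J}^{(3)},\mathcal{J}^{(4)}$ involve only the bounded continuous kernels $f,\partial_x^2f,\partial_xf$, hence are bounded continuous functionals of $\rho$ and converge along $\rho_n\to\lambda$; for $\mathcal{J}^{(2)}$, which carries the possibly unbounded $g(x_s,\omega)$, one uses $|g(x_s,\omega)|\le C_g(1+|\omega|^{k_1})$ and the uniform bound $\int|\omega|^{\iota}(\rho_n)_2(\dd\omega)=\int|\chi_{M_n}(\omega)|^{\iota}\mu(\dd\omega)\le\int|\omega|^{\iota}\mu(\dd\omega)<+\infty$ from \eqref{eq:moment_cond_mu}: since $\iota>k_1$, the family $\{\omega\mapsto1+|\omega|^{k_1}\}$ is uniformly integrable along $\{(\rho_n)_2\}$, so splitting the $\omega$--integration at $\{|\omega|\le R\}$ the tail is uniformly small and the bulk converges by weak convergence, after which one lets $R\to\infty$.

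The main obstacle is ingredient (iii) with a \emph{moving} reference measure: one has to ensure that nothing is lost in the $\liminf$ over the truncation level, which is exactly the joint lower semicontinuity of $\mathcal{H}$ evaluated along $W^{u}\times\mu_{M_n}\to W^{u}\times\mu$ (using the Feller property of Lemma~\ref{lem:Wu_Feller}); the secondary technical point is the uniform--integrability argument for $\mathcal{J}^{(2)}$, forced by the unboundedness of the intrinsic dynamics $g$. The remaining steps (diagonal extraction, $\mu_M\to\mu$, weak convergence of the semidirect products, continuity of the bounded terms of $\mathcal{J}$) are routine.
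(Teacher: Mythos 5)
Your argument is correct, but it takes a genuinely different route from the paper's. The paper first replaces $\mathcal{G}$ and $\mathcal{G}_{M}$ by the entropic representation of Proposition~\ref{prop:simrate}, $\mathcal{G}_{\nu}(\lambda)=\int\mathcal{H}(\lambda^{\omega}\vert P^{\lambda,\omega})\,\nu(\dd\omega)$, proves for each fixed $\omega$ the lower semicontinuity of $\lambda\mapsto\mathcal{H}(\lambda^{\omega}\vert P^{\lambda,\omega})$ (via the variational formula and the weak convergence $P^{\pi_{M},\omega}\to P^{\lambda,\omega}$), notes that for fixed $\omega$ the truncation is eventually inactive ($\chi_{k_{M}}(\omega)=\omega$ for large $M$), and concludes by Fatou's lemma in the $\omega$-integral. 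You instead stay with the representation $\mathcal{G}_{\nu}=\mathcal{I}_{\nu}-\mathcal{J}$ of \eqref{eq:mcG_nu} and split the liminf into two facts: joint lower semicontinuity of the global entropy along the moving reference measures $W^{u}(\dd x)\mu_{M_{n}}(\dd u)\to W^{u}(\dd x)\mu(\dd u)$, and the convergence $\mathcal{J}(\rho_{n})\to\mathcal{J}(\lambda)$, whose only delicate term $\mathcal{J}^{(2)}$ you control by uniform integrability of $1+\vert\omega\vert^{k_{1}}$ under $(\rho_{n})_{2}=\mu_{M_{n}}$, using $\iota>k_{1}$ and \eqref{eq:moment_cond_mu}. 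Both arguments have comparable depth: passing to $P^{\lambda,\omega}$ makes $\mathcal{J}$ disappear so only a pointwise-in-$\omega$ semicontinuity plus Fatou is needed, while your route avoids Proposition~\ref{prop:simrate} and the continuity of $\lambda\mapsto P^{\lambda,\omega}$ altogether, at the price of the $\mathcal{J}^{(2)}$ estimate and of checking that the semidirect products $W^{u}(\dd x)\mu_{M_{n}}(\dd u)$ converge weakly; the latter requires slightly more than the literal statement of Lemma~\ref{lem:Wu_Feller} (continuity of $u\mapsto\int\Phi(x,u)W^{u}(\dd x)$ for jointly continuous bounded $\Phi$), but this follows from the same tightness and uniform-continuity argument already used inside the proof of that lemma, so it is a routine fill-in rather than a gap. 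Finally, your contradiction/diagonal extraction from \eqref{eq:G_tilde} is equivalent to the paper's direct choice of $(k_{M},\pi_{M})$ with $\mathcal{G}_{k_{M}}(\pi_{M})\leq\mathcal{Q}(\lambda)+\varepsilon$.
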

\begin{proof}[Proof of Proposition~\ref{prop:ineqI2}]
We use here the representation \eqref{eq:R_nu}. Fix $ \omega\in \Supp(\mu)$. We claim that the functional $ \lambda \mapsto \mathcal{ H}( \lambda^{ \omega}\ \vert P^{ \lambda, \omega})$ is lower semicontinuous. It suffices to prove that it is sequentially lower semicontinuous: choose $ \left\lbrace \lambda_{ p}\right\rbrace_{ p\geq1}$ such that $ \lambda_{ p}$ converges weakly to $ \lambda$ as $ p\to \infty$. The following usual representation for the entropy holds (see for example \cite{Dembo1998}, Lemma~6.2.13):
\begin{equation}
\mathcal{ H}( \lambda^{ \omega}\vert P^{ \lambda, \omega}) = \sup_{ \phi} \left\lbrace \int_{ \mathcal{ E}} \phi(x) \lambda^{ \omega}(\dd x) - \ln \int_{ \mathcal{ E}} e^{ \phi(x)}P^{ \lambda, \omega}(\dd x)\right\rbrace,
\end{equation}
where the supremum holds on the set of continuous and bounded test functions $ \phi$. Then, 
\begin{align*}
\liminf_{ p\to\infty} \mathcal{ H}( \lambda_{ p}^{ \omega} \vert P^{ \lambda_{ p}, \omega})&= \sup_{ q\geq0} \inf_{ p\geq q} \sup_{ \phi} \left\lbrace \int_{ \mathcal{ E}} \phi(x) \lambda^{ \omega}_{ p}(\dd x) - \ln \int_{ \mathcal{ E}} e^{ \phi(x)} P^{ \lambda_{ p}, \omega}(\dd x)\right\rbrace,\\
&\geq \sup_{ \phi} \sup_{ q\geq0} \inf_{ p\geq q}\left\lbrace \int_{ \mathcal{ E}} \phi(x) \lambda^{ \omega}_{ p}(\dd x) - \ln \int_{ \mathcal{ E}} e^{ \phi(x)} P^{ \lambda_{ p}, \omega}(\dd x)\right\rbrace,\\
&= \sup_{ \phi} \liminf_{ p\to\infty} \left\lbrace \int_{ \mathcal{ E}} \phi(x) \lambda^{ \omega}_{ p}(\dd x) - \ln \int_{ \mathcal{ E}} e^{ \phi(x)} P^{ \lambda_{ p}, \omega}(\dd x)\right\rbrace,\\
&=\sup_{ \phi} \left\lbrace \int_{ \mathcal{ E}} \phi(x) \lambda^{ \omega}(\dd x) - \ln \int_{ \mathcal{ E}} e^{ \phi(x)} P^{ \lambda, \omega}(\dd x)\right\rbrace= \mathcal{ H}( \lambda^{ \omega}\vert P^{ \lambda, \omega}),
\end{align*}
where we used in the last equality that $P^{ \lambda_{ p}, \omega}$ converges weakly to $P^{ \lambda, \omega}$ if $ \lambda_{ p}\to \lambda$. This proves the claim.

Let us fix now $ \lambda\in \mathcal{ Y}$ such that $\mathcal{ Q}(\lambda)< +\infty$ and $ \varepsilon>0$. By definition of $\mathcal{ Q}(\lambda)$ in \eqref{eq:G_tilde}, there exists a sequence $ \left\lbrace k_{ M}\right\rbrace_{ M\geq1}$ such that $k_{ M}\to\infty$ as $M\to\infty$ and $ \left\lbrace \pi_{ M}\right\rbrace_{ M\geq1}\in \mathcal{ Y}^{ \mathbb{ N}}$ such that $ \pi_{ M}$ converges weakly to $ \lambda$ as $M\to\infty$, such that
\begin{align*}
\mathcal{ G}_{ k_{ M}}( \pi_{ M}) = \int_{ \mathbb{ R}} \mathcal{ H} \left( \pi_{ M}^{ \chi_{ k_{ M}}(\omega)} \vert P^{ \pi_{ M}, \chi_{ k_{ M}}(\omega)}\right)\mu(\dd \omega)\leq \mathcal{ Q}(\lambda) + \varepsilon.
\end{align*}
For fixed $ \omega$,
\begin{align}
\liminf_{ M\to\infty} \mathcal{ H} \left(\pi_{ M}^{ \chi_{ k_{ M}}(\omega)} \vert P^{ \pi_{ M}, \chi_{ k_{ M}}(\omega)}\right) &= \liminf_{ M\to\infty} \mathcal{ H} \left(\pi_{ M}^{\omega} \vert P^{ \pi_{ M}, \omega}\right), \label{aux:H_rhoM_1}\\
&\geq \mathcal{ H} \left( \lambda^{ \omega} \vert P^{ \lambda, \omega}\right) \label{aux:H_rhoM_2},
\end{align}
where we used that $\mathcal{ H} \left(\pi_{ M}^{ \chi_{ k_{ M}}(\omega)} \vert P^{ \pi_{ M}, \chi_{ k_{ M}}(\omega)}\right)$ is actually equal to $\mathcal{ H} \left(\pi_{ M}^{\omega} \vert P^{ \pi_{ M}, \omega}\right)$ for a sufficiently large $M$ in \eqref{aux:H_rhoM_1} and the lower semicontinuity of $ \lambda \mapsto \mathcal{ H} \left( \lambda^{ \omega} \vert P^{ \lambda, \omega}\right)$ in \eqref{aux:H_rhoM_2}. By \eqref{aux:H_rhoM_2} and Fatou's Lemma,
\begin{align*}
\mathcal{ G}(\lambda)&=\int_{ \mathbb{ R}} \mathcal{ H}( \lambda^{ \omega} \vert P^{ \lambda, \omega}) \mu(\dd \omega),\\
&\leq \int_{ \mathbb{ R}} \liminf_{ M\to\infty} \mathcal{ H} \left(\pi_{ M}^{ \chi_{ k_{ M}}(\omega)} \vert P^{ \pi_{ M}, \chi_{ k_{ M}}(\omega)}\right)\mu(\dd \omega),\\
&\leq \liminf_{ M\to\infty} \int_{ \mathbb{ R}} \mathcal{ H} \left(\pi_{ M}^{ \chi_{ k_{ M}}(\omega)} \vert P^{ \pi_{ M}, \chi_{ k_{ M}}(\omega)}\right)\mu(\dd \omega)= \liminf_{ M\to\infty} \mathcal{ G}_{ k_{ M}}( \pi_{ M}),\\
&\leq\mathcal{ Q}(\lambda) + \varepsilon.
\end{align*}
Since this is true for all $ \varepsilon>0$, we obtain that $ \mathcal{ G}(\lambda) \leq \mathcal{ Q}(\lambda) <+\infty$. Proposition~\ref{prop:ineqI2} follows.
\end{proof}
\begin{proposition}
\label{prop:G_coercive}
The functional $ \mathcal{ G}$ is a good rate function and the weak LDP of Proposition~\ref{prop:exp_approx_dembo} is a strong large deviation principle.
\end{proposition}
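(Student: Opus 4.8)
The plan is to establish the \emph{exponential tightness} of the sequence $\left\lbrace \mathbf{P}_{N}\right\rbrace_{N\geq1}$ in $\mathcal{Y}$; combined with the weak large deviation principle of Proposition~\ref{prop:exp_approx_dembo} and the identification $\mathcal{Q}=\mathcal{G}$ obtained in Propositions~\ref{prop:ineqI1} and~\ref{prop:ineqI2}, this yields at once the goodness of $\mathcal{G}$ and the upgrade of the weak principle to a strong one, by a standard lemma (see \cite{Dembo1998}, Lemma~1.2.18).

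First I would reduce the statement to the uncoupled law. By Proposition~\ref{prop:G1}, $\frac{\dd P_N^{\underline{\omega}}}{\dd W_N^{\underline{\omega}}} = \exp\!\left(N\mathcal{J}(L_N^{\underline{\omega}}) - \mathcal{K}(L_N^{\underline{\omega}})\right)$. Since $f$, $\partial_x f$, $\partial_x^2 f$ are bounded by $C_f$ and $|g(x,\omega)|\le C_g(1+|\omega|^{k_1})$, every term $\mathcal{J}^{(i)}$ evaluated at $\lambda=L_N^{\underline{\omega}}$ is bounded by a constant (depending on $T,C_f,C_g$) times $\frac1N\sum_{i=1}^{N}(1+|\omega_i|^{k_1})$, which is bounded uniformly in $N$ because $\iota>k_1$ and \eqref{eq:moment_cond_mu} holds; likewise $|\mathcal{K}(L_N^{\underline{\omega}})|\le C_f/2$. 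Hence there is a constant $C_0$, independent of $N$, with $\frac{\dd P_N^{\underline{\omega}}}{\dd W_N^{\underline{\omega}}}\le e^{NC_0}$ for $N$ large, so that $\mathbf{P}_N(A)\le e^{NC_0}\,\mathbf{W}_N^{\underline{\omega}}(A)$ for every Borel $A\subseteq\mathcal{Y}$. It therefore suffices to prove that $\left\lbrace\mathbf{W}_N^{\underline{\omega}}\right\rbrace_{N\geq1}$ is exponentially tight.

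The exponential tightness of $\left\lbrace\mathbf{W}_N^{\underline{\omega}}\right\rbrace$ is obtained by the usual Chernoff bound for empirical measures of independent (non identically distributed) variables, with two ingredients handling the disorder. On the one hand, since $\underline{\omega}$ is deterministic, for every $\eta>0$ there is $R_\eta$ with $\frac1N\sum_{i=1}^{N}\mathbf{1}_{|\omega_i|>R_\eta}\le\eta$ for all $N$ large, by Markov's inequality and \eqref{eq:moment_cond_mu}. On the other hand, the family $\{W^{\omega}:\ |\omega|\le R\}$ is \emph{uniformly} tight: under $W^{\omega}$ one has $\|x\|_\infty\le|x_0|+TC_g(1+R^{k_1})+\|b\|_\infty$ and $x$ has modulus of continuity controlled by $C_g(1+R^{k_1})\delta+$ (modulus of continuity of $b$), while the tails of $|x_0|$ under $\gamma^{\omega}$ are uniformly small over $|\omega|\le R$ thanks to the exponential moment bound \eqref{eq:control_ell}, and those of $\|b\|_\infty$ and of the modulus of $b$ are uniform; by Arzel\`a--Ascoli one thus finds, for each $R$ and each $\delta>0$, a compact $C_{R,\delta}\subset\mathcal{E}$ with $\sup_{|\omega|\le R}W^{\omega}(C_{R,\delta}^c)\le\delta$. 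Combining these, for a compact set of the form $\mathcal{K}=C_{R,\delta}\times[-R,R]\subseteq\mathcal{E}\times\mathcal{F}$, the exponential Markov inequality and independence give, for every $\theta>0$ and $N$ large with $R=R_\eta$,
\[
\mathbf{W}_N^{\underline{\omega}}\!\left(L_N^{\underline{\omega}}(\mathcal{K}^c)>\eta\right)\ \le\ \mathbf{1}_{\left\{\frac1N\sum_i\mathbf{1}_{|\omega_i|>R}>\eta/2\right\}}+e^{-\theta\eta N/2}\,e^{(e^{\theta}-1)\delta N},
\]
so that, choosing first $\theta$ and then $\delta$ (hence $\mathcal{K}$), $\limsup_N\frac1N\ln\mathbf{W}_N^{\underline{\omega}}(L_N^{\underline{\omega}}(\mathcal{K}^c)>\eta)$ can be made as negative as wanted. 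A standard diagonal argument then produces, for each $L>0$, a tight (hence, by Prokhorov's theorem, compact) set $\mathcal{A}_L=\bigcap_{j\ge1}\left\{\lambda\in\mathcal{Y}:\ \lambda(\mathcal{K}_j^c)\le\eta_j\right\}$, with $\eta_j\downarrow0$, such that $\limsup_N\frac1N\ln\mathbf{W}_N^{\underline{\omega}}(\mathcal{A}_L^c)\le-(L+C_0)$; hence $\limsup_N\frac1N\ln\mathbf{P}_N(\mathcal{A}_L^c)\le-L$, which is the exponential tightness of $\left\lbrace\mathbf{P}_N\right\rbrace$.

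Finally, exponential tightness together with the weak large deviation principle of Proposition~\ref{prop:exp_approx_dembo}, whose rate function is $\mathcal{Q}=\mathcal{G}$ by Propositions~\ref{prop:ineqI1} and~\ref{prop:ineqI2}, implies by \cite{Dembo1998}, Lemma~1.2.18, that $\left\lbrace\mathbf{P}_N\right\rbrace_{N\geq1}$ satisfies the \emph{full} large deviation principle at speed $N$ with rate function $\mathcal{G}$, and that $\mathcal{G}$ is a good rate function. The main obstacle in carrying out this plan is the uniform tightness estimate for $\{W^{\omega}:\ |\omega|\le R\}$ and, more generally, the bookkeeping needed to split cleanly the deterministic control of the large values of the disorder (via \eqref{eq:moment_cond_mu} and \eqref{eq:control_ell}) from the probabilistic Chernoff estimate; the remaining steps are routine.
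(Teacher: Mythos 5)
Your proposal is correct, but it reaches exponential tightness by a genuinely different route than the paper. The paper's proof is soft: it recycles what has already been established, namely that for each $M$ the truncated laws $\left\lbrace \mathbf{P}_{N}^{M}\right\rbrace_{N\geq1}$ satisfy an LDP with a \emph{good} rate function (hence, by \cite{Dembo1998}, Ex.~4.1.10(b), are exponentially tight in the sense of finite coverings by balls), and then transfers this to $\left\lbrace \mathbf{P}_{N}\right\rbrace_{N\geq1}$ through the coupling $\mathbf{Q}_{N,M}$ and the estimate \eqref{eq:lim_QNM_Gamma}, via the inclusion $\left\lbrace \lambda_{N}^{M}\in\bigcup_{i}B(m_{i},\eta)\right\rbrace\subset\left\lbrace \lambda_{N}\in\bigcup_{i}B(m_{i},2\eta)\right\rbrace\cup\left\lbrace(\lambda_{N}^{M},\lambda_{N})\in\Gamma_{\eta}\right\rbrace$ and Ex.~4.1.10(a); no new probabilistic estimate on the diffusions is needed. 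You instead prove exponential tightness from scratch: a Girsanov domination $\mathbf{P}_{N}(A)\leq e^{NC_{0}}\,\mathbf{W}_{N}^{\underline{\omega}}(A)$ (valid because $\left\vert\mathcal{K}\right\vert\leq C_{f}/2$ and each $\mathcal{J}^{(i)}(L_{N}^{\underline{\omega}})$ is bounded uniformly in $N$ thanks to $\iota>k_{1}$ and \eqref{eq:moment_cond_mu}), followed by a Chernoff bound for the uncoupled, independent but non-i.i.d.\ system, splitting the disorder into $\{\left\vert\omega_{i}\right\vert\leq R\}$ (where the drift bound and the uniform exponential moments \eqref{eq:control_ell} give uniform tightness of $W^{\omega}$ on path space via Arzel\`a--Ascoli) and the asymptotically small fraction $\{\left\vert\omega_{i}\right\vert>R\}$ controlled deterministically by \eqref{eq:moment_cond_mu}. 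Both arguments then conclude identically (exponential tightness $+$ weak LDP with rate $\mathcal{Q}=\mathcal{G}$ $\Rightarrow$ full LDP and goodness of $\mathcal{G}$). What the paper's route buys is brevity and independence from any further structure of the model; what yours buys is a self-contained tightness proof that does not rely on the truncation LDPs or the coupling for this particular step. Two small points of bookkeeping in your sketch, both repairable and essentially acknowledged by you: in the Chernoff display the product must be taken only over indices with $\left\vert\omega_{i}\right\vert\leq R$ (the others are absorbed in the deterministic indicator, using $L_{N}(\mathcal{K}^{c})>\eta$ and the fraction of large disorder $\leq\eta/2$ to force $\frac{1}{N}\sum_{i:\left\vert\omega_{i}\right\vert\leq R}\mathbf{1}_{x_{i}\notin C_{R,\delta}}>\eta/2$), and in the diagonal construction of $\mathcal{A}_{L}$ one must handle the finitely many small $N$ for which the deterministic disorder estimate is not yet valid (each fixed $\mathbf{W}_{N}^{\underline{\omega}}$ being tight, the compacts $\mathcal{K}_{j}$ can be enlarged to cover them).
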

\begin{proof}[Proof of Proposition~\ref{prop:G_coercive}]
This property is quite general and relies on the fact that the space $ \mathcal{ Y}$ is a Polish space. We know that $ \left\lbrace \mathbf{ P}_{ N}^{ M}\right\rbrace_{ N\geq1}$ satisfies for all $M$ a large deviation principle with good rate function $ \mathcal{ G}_{ M}$ and that $ \left\lbrace \mathbf{ P}_{ N}^{ M}\right\rbrace_{ N\geq1}$ is an exponentially good approximation of $ \left\lbrace\mathbf{ P}_{ N}\right\rbrace_{ N\geq1}$ in the sense of Proposition~\ref{prop:limgam}. One can deduce from this that the sequence $ \left\lbrace \mathbf{ P}_{ N}\right\rbrace_{ N\geq 1}$ is exponentially tight, namely, that for all $ \alpha< \infty$, there exists a compact $K_{ \alpha}$ in $ \mathcal{ Y}$ such that 
\begin{equation}
\label{eq:exp_tight}
\limsup_{ N\to\infty} \frac{ 1}{ N} \ln \mathbf{ P}_{ N}( K_{ \alpha}^{ c}) \leq - \alpha. 
\end{equation}
If we assume for the moment that \eqref{eq:exp_tight} is true, Proposition~\ref{prop:G_coercive} follows from standard arguments: applying the large deviation lower bound for $ \left\lbrace \mathbf{ P}_{ N}\right\rbrace_{ N\geq1}$ to the open set $K_{ \alpha}^{ c}$, one obtains that $ - \alpha > \liminf_{ N\to\infty} \frac{ 1}{ N} \ln \mathbf{ P}_{ N}( K_{ \alpha}^{ c}) \geq - \inf_{ \lambda \in K_{ \alpha}^{ c}} \mathcal{ G}( \lambda)$. In particular, $ \alpha< \inf_{ \lambda\in K_{ \alpha}^{ c}} \mathcal{ G}(\lambda)$, so that the $\alpha$-level set of $ \mathcal{ G}$, $  \left\lbrace \lambda\in \mathcal{ Y},\ \mathcal{ G}(\lambda)\leq \alpha\right\rbrace$ is included in the compact set $K_{ \alpha}$. Hence, $ \mathcal{ G}$ is a good rate function. The upper bound for closed sets $F$ follow from the weak upper bound for compact sets and the obvious estimate $\limsup_{ N\to\infty} \frac{ 1}{ N}\ln \mathbf{ P}_{ N}(F) \leq \max \left( \limsup_{ N\to\infty} \frac{ 1}{ N}\ln \mathbf{ P}_{ N}(F\cap K_{ \alpha}), \limsup_{ N\to\infty} \frac{ 1}{ N}\ln \mathbf{ P}_{ N}(K_{ \alpha}^{ c})\right)$.

It remains to prove \eqref{eq:exp_tight}: for any $N, M\geq1$, denote by $( \lambda_{ N}^{ M}, \lambda_{ N})\in \mathcal{ Y}^{ 2}$ any vector with law $ \mathbf{ Q}_{ N, M}$, where $ \mathbf{ Q}_{ N, M}$ is the coupling introduced in Proposition~\ref{prop:limgam}. Then, for any $k\geq1$, for any $m_{ 1}, \ldots, m_{ k}\in \mathcal{ Y}$, any $ \eta>0$, \[ \left\lbrace \lambda_{ N}^{ M} \in \bigcup_{ i=1}^{ k} B(m_{ i}, \eta)\right\rbrace \subset \left\lbrace \lambda_{ N}\in \bigcup_{ i=1}^{ k} B(m_{ i}, 2 \eta)\right\rbrace \cup \left\lbrace (\lambda_{ N}^{ M}, \lambda_{ N}) \in \Gamma_{ \eta}\right\rbrace,\] where $B(m, \eta)$ is the open ball in $ \mathcal{ Y}$ of center $m$ and radius $ \eta$ and $ \Gamma_{ \eta}$ is defined in \eqref{eq:Gamma_delta}. Consequently,
\begin{align*}
\limsup_{ N\to\infty} \frac{ 1}{ N} \ln \mathbf{ P}_{ N} \left( \left[\bigcup_{ i=1}^{ k} B(m_{ i}, 2\eta)\right]^{ c}\right)&\leq \max \Bigg( \limsup_{ N\to\infty} \frac{ 1}{ N} \ln \mathbf{ P}_{ N}^{ M} \left( \left[\bigcup_{ i=1}^{ k} B(m_{ i}, \eta)\right]^{ c }\right),\\
&\qquad\qquad \limsup_{ N\to\infty} \frac{ 1}{ N} \ln \mathbf{ Q}_{ N, M}( \Gamma_{ \eta})\Bigg).
\end{align*}
Fix $ \alpha>0$ and $ \eta>0$. By \eqref{eq:lim_QNM_Gamma}, one can choose $M\geq1$ sufficiently large so that $\limsup_{ N\to\infty} \frac{ 1}{ N} \ln \mathbf{ Q}_{ N, M}( \Gamma_{ \eta})\leq - \alpha -1$. For this $M\geq1$, the sequence $ \left\lbrace\mathbf{ P}_{ N}^{ M}\right\rbrace $ satisfies a large deviation principle with a good rate function. Hence, by \cite{Dembo1998}, Ex. 4.1.10, (b), there exists $k\geq1$ large enough and $m_{ 1}, \ldots, m_{ k}\in \mathcal{ Y}$ such that 
\[ \limsup_{ N\to\infty} \frac{ 1}{ N} \ln \mathbf{ P}_{ N}^{ M} \left( \left[ \bigcup_{ i=1}^{ k} B(m_{ i}, \eta)\right]^{ c}\right)< - \alpha.\]
From the two previous estimates, we deduce that
\[\limsup_{ N\to\infty} \frac{ 1}{ N} \ln \mathbf{ P}_{ N} \left( \left[\bigcup_{ i=1}^{ k} B(m_{ i}, 2\eta)\right]^{ c}\right) < - \alpha.\]
An application of \cite{Dembo1998}, Ex. 4.1.10, (a) shows that the sequence $ \left\lbrace\mathbf{ P}_{ N}\right\rbrace_{ N\geq1}$ is exponentially tight.
\end{proof}
\section{ Large deviations of the empirical flow}
\label{sec:proof_flow}
This section if devoted to prove Theorem~\ref{theo:tilde_LN_x_om}. As a corollary of Theorem~\ref{theo:LN_x_om}, an easy application of the contraction principle shows that for any sequence $ \left\lbrace\omega_{ i}\right\rbrace_{ i\geq1}$ satisfying the assumptions of Section~\ref{sec:model_assumptions}, the sequence $ \left\lbrace \mathbf{ p}_{ N}^{ \underline{ \omega}}\right\rbrace_{ N\geq1}$ satisfies a large deviation principle in $ \mathcal{ C}([0, T], \mathcal{ M}(\mathbb{ R}\times \mathbb{ R}))$ with good rate function
\begin{equation}
\label{eq:rate_function_flow_2}
\mathscr{ G}_{ 1}: q\in \mathcal{ C}([0, T], \mathcal{ M}(\mathbb{ R}\times \mathbb{ R})) \mapsto \inf \left\lbrace \mathcal{ G}(\lambda),\ \pi\lambda= q\right\rbrace,
\end{equation}
where $ \mathcal{ G}(\cdot)$ is defined in \eqref{eq:rate_function_proc} and $ \pi$ in \eqref{eq:mapping_mcE_to_C}.  The main issue is the identification of the rate function $ \mathscr{ G}_{ 1}$ with $ \mathscr{ G}$ defined in \eqref{eq:rate_function_mscG}. The first result follows the lines of \cite{daiPra96}:
\begin{proposition}
\label{prop:G2_G3_equal}
If $q\in \mathcal{ C}([0, T], \mathcal{ M}(\mathbb{ R}\times \mathbb{ R}))$ is such that $ \mathscr{ G}_{ 1}(q)< \infty$, then 
\begin{equation}
\label{eq:G2_equal_G3}
\mathscr{ G}_{ 1}(q)= \mathscr{ G}(q)
\end{equation}
\end{proposition}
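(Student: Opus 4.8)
We only need to treat $q$ with $\mathscr{G}_1(q)<\infty$. Then there exists $\lambda\in\mathcal{Y}$ with $\pi\lambda=q$ and $\mathcal{G}(\lambda)<\infty$; by \eqref{eq:rate_function_proc} this forces $\lambda_2=\mu$, hence $q_2=\mu$, the disintegration $q_t=q^\omega_t\,\mu(\dd\omega)$ is well defined, and (chain rule for entropy, together with $\pi\lambda=q$) $q^\omega_0\ll\gamma^\omega$ for $\mu$-a.e.\ $\omega$. Note that $\pi\lambda=q$ pins down, for every $t$ and $\mu$-a.e.\ $\omega$, the time-$t$ marginal of $\lambda^\omega$ to be $q^\omega_t$. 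The first observation is that, once $\pi\lambda=q$ is imposed, the drift $\beta^{\lambda_t,\omega}$ of \eqref{eq:simp1} equals $\beta^{q_t,\omega}$ and hence no longer depends on the free variable $\lambda$: writing $P^{q,\omega}$ for the law of the corresponding time-inhomogeneous diffusion started from $\gamma^\omega$, the representation \eqref{eq:rate_function_proc} reads $\mathcal{G}(\lambda)=\int_{\mathbb{R}}\mathcal{H}(\lambda^\omega\vert P^{q,\omega})\,\mu(\dd\omega)$ for every $\lambda$ with $\pi\lambda=q$.

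\textbf{Decoupling over the disorder.} Writing $\lambda=\lambda^\omega\,\mu(\dd\omega)$, the constraints ``$\lambda_2=\mu$ and $\pi\lambda=q$'' amount exactly to a measurable family $\omega\mapsto\lambda^\omega$ whose time marginals satisfy $\lambda^\omega_t=q^\omega_t$ for all $t$. Since both the functional and the constraint split over $\omega$, a measurable-selection argument (the infimand below is measurable in $\omega$; cf.\ the corresponding step in \cite{daiPra96}) gives
\[
\mathscr{G}_1(q)=\int_{\mathbb{R}}\Big(\inf\big\{\mathcal{H}(\nu\vert P^{q,\omega}):\ \nu\in\mathcal{M}(\mathcal{E}),\ \nu_t=q^\omega_t\ \forall t\in[0,T]\big\}\Big)\mu(\dd\omega).
\]

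\textbf{The per-disorder flow rate function.} The crux is the identity, valid for $\mu$-a.e.\ $\omega$,
\[
\inf\big\{\mathcal{H}(\nu\vert P^{q,\omega}):\ \nu_t=q^\omega_t\ \forall t\big\}=\mathcal{H}(q^\omega_0\vert\gamma^\omega)+\mathscr{K}(q,\omega),
\]
which is the Dawson--G\"artner description of the rate function of an i.i.d.\ family of time-inhomogeneous diffusions with reference law $P^{q,\omega}$. For ``$\ge$'', if $\mathcal{H}(\nu\vert P^{q,\omega})<\infty$ then by Girsanov/F\"ollmer $\nu$ is the law of $\dd x_t=(\beta^{q_t,\omega}(x_t)+g(x_t,\omega)+u_t(x_t))\,\dd t+\dd b_t$ with $x_0\sim q^\omega_0$ and $\mathcal{H}(\nu\vert P^{q,\omega})=\mathcal{H}(q^\omega_0\vert\gamma^\omega)+\tfrac12\int_0^T\mathbf{E}_\nu[\,u_t(x_t)^2\,]\dd t$; setting $\bar u_t(x):=\mathbf{E}_\nu[\,u_t(x_t)\,\vert\,x_t=x\,]$, the marginal flow solves weakly $\partial_t q^\omega_t=\mathscr{L}^\omega q^\omega_t-\partial_x(\bar u_t q^\omega_t)$ ($\mathscr{L}^\omega$ being, in duality, the generator of $P^{q,\omega}$), and pairing this equation with $\phi\in\mathcal{C}^\infty_0(]0,T[\times\mathbb{R})$, integrating by parts and using Young's and Jensen's inequalities yields $\mathscr{K}(q,\omega)\le\tfrac12\int_0^T\mathbf{E}_\nu[\,u_t(x_t)^2\,]\dd t$ (the constants being matched to the normalization of \eqref{eq:def_mscK}). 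For ``$\le$'', finiteness of the supremum in \eqref{eq:def_mscK} produces, by Riesz representation on $L^2(\dd t\,q^\omega_t(\dd x))$ after a standard closure argument, a vector field $v$ with $\partial_t q^\omega_t-\mathscr{L}^\omega q^\omega_t=-\partial_x(v_t q^\omega_t)$ weakly and $\tfrac12\int_0^T\langle v_t^2,q^\omega_t\rangle\dd t=\mathscr{K}(q,\omega)$; the law $\nu$ of $\dd x_t=(\beta^{q_t,\omega}(x_t)+g(x_t,\omega)+v_t(x_t))\dd t+\dd b_t$, $x_0\sim q^\omega_0$, has time marginals $q^\omega_t$ (uniqueness of the Fokker--Planck flow under the regularity of Section~\ref{sec:model_assumptions}) and, by Girsanov, relative entropy $\mathcal{H}(q^\omega_0\vert\gamma^\omega)+\mathscr{K}(q,\omega)$, which gives the infimum.

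\textbf{Conclusion and main difficulty.} Integrating the last identity against $\mu$ and recombining the initial contributions via \eqref{eq:q_zero}--\eqref{eq:gamma} recovers $\mathscr{G}(q)$ as in \eqref{eq:rate_function_mscG}; the finiteness conditions met along the way ($q^\omega_0\ll\gamma^\omega$ $\mu$-a.e.\ and $t\mapsto q^\omega_t$ weakly differentiable $\mu$-a.e.) are precisely the membership $q\in\mathbb{A}$, so that $\mathscr{G}_1$ and $\mathscr{G}$ are simultaneously $+\infty$ outside $\mathbb{A}$. I expect the main obstacle to be the per-disorder step above: the Girsanov/F\"ollmer representation of finite-entropy path measures together with the entropy formula, and — for the upper bound — turning ``$\mathscr{K}(q,\omega)<\infty$'' into an honest $L^2(\dd t\,q^\omega_t)$ drift solving the continuity equation and then verifying that the associated diffusion has the prescribed time marginals (uniqueness of the Fokker--Planck flow). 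A secondary technical point is the exchange of $\inf$ and $\int\mu(\dd\omega)$, which requires joint measurability of the infimand in $\omega$ and a measurable selection; this is where one leans on \cite{daiPra96} and on the Feller regularity built into Section~\ref{sec:model_assumptions}.
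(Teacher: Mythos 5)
Your overall skeleton (disintegrate over $\omega$, use a Girsanov/F\"ollmer entropy decomposition per disorder value, relate the quadratic drift cost to $\mathscr{K}$) is parallel to the paper's, and your ``$\geq$'' half (finite-entropy $\nu$ has an adapted drift, project it to a Markovian drift by conditional expectation, use Jensen and the variational form of $\mathscr{K}$) is sound. The genuine gap is in the ``$\leq$'' half of your per-disorder identity, which is exactly the inequality $\mathscr{G}_1(q)\leq\mathscr{G}(q)$ that the statement requires. There you pass from $\mathscr{K}(q,\omega)<\infty$ to a drift $v\in L^{2}(\dd t\, q_{t}^{\omega}(\dd x))$ via Riesz (fine), and then assert that the SDE with drift $\beta^{q_t,\omega}+g+v$ has a solution whose time marginals are $q_{t}^{\omega}$, ``by uniqueness of the Fokker--Planck flow under the regularity of Section~\ref{sec:model_assumptions}''. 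That justification does not work: the assumptions of Section~\ref{sec:model_assumptions} concern $f$ and $g$ only, while $v$ has no regularity beyond square integrability against $q$; neither well-posedness of that SDE nor uniqueness of the associated Fokker--Planck equation follows, and turning a weak solution of the continuity equation with an $L^{2}(q)$ drift into an honest path measure with prescribed marginals and entropy $\tfrac12\Vert v\Vert_{L^2(q)}^{2}$ is precisely the hard ``Nelson process'' result. In the paper this step is deliberately isolated as Proposition~\ref{prop:cattiaux_leonard} and is resolved only by invoking Cattiaux--L\'eonard \cite{MR1262893} (Th.~5.9); as written, your argument for Proposition~\ref{prop:G2_G3_equal} silently relies on it.

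Moreover, for the statement at hand this heavy step is avoidable, and that is how the paper proceeds: since $\mathscr{G}_1(q)<\infty$ and $\mathcal{G}$ is a good rate function, the infimum defining $\mathscr{G}_1(q)$ is attained at some $\lambda$ with $\pi\lambda=q$; by F\"ollmer \cite{MR983373} the constrained entropy minimizer $\lambda^{\omega}$ is itself the law of a Markov diffusion with drift $b_{t}^{\omega}(x)$, Girsanov gives $\mathcal{H}(\lambda^{\omega}\vert W^{\omega})=\mathcal{H}(q_{0}^{\omega}\vert\gamma^{\omega})+\tfrac12\int\int_0^T (b_{t}^{\omega}-g)^{2}\,\dd t\,\lambda^{\omega}(\dd x)$, and then $\mathscr{K}(q,\omega)$ \emph{equals} (not merely is bounded by) $\tfrac12\int_0^T\int (b_{t}^{\omega}-g-\beta^{q_{t},\omega})^{2}q_{t}^{\omega}(\dd x)\dd t$, because the Cauchy--Schwarz bound is saturated thanks to the density of $\{\partial_{x}\phi,\ \phi\in\mathcal{C}_0^{\infty}\}$ in $L^{2}(q_{t}^{\omega}(\dd x)\dd t)$ (cf.\ \cite{daiPra96}, p.~749). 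This yields the equality in one stroke, without constructing any new process from $\mathscr{K}<\infty$, and it also bypasses your secondary issue: working with a single global minimizer $\lambda$ provides the measurable family $\omega\mapsto\lambda^{\omega}$ automatically, so no measurable-selection argument for per-$\omega$ near-minimizers is needed. To repair your write-up, either replace the ``$\leq$'' construction by this minimizer-plus-density argument, or state explicitly that you are invoking the Cattiaux--L\'eonard existence result (in which case you are in effect proving Proposition~\ref{prop:cattiaux_leonard} rather than using only the hypothesis $\mathscr{G}_1(q)<\infty$).
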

\begin{proof}[Proof of Proposition~\ref{prop:G2_G3_equal}]
For any $q\in \mathcal{ C}([0, T], \mathbb{ R} \times \mathbb{ R})$ such that $ \mathscr{ G}_{ 1}(q)<\infty$, since $ \mathcal{ G}$ is a good rate function, there exists $ \lambda\in \mathcal{ M}( \mathcal{ C}([0, T], \mathbb{ R}), \mathbb{ R})$ such that $ \pi\lambda= q$ and $ \mathcal{ G}( \lambda) = \mathscr{ G}_{ 1}(q) < \infty$. In particular, $ \lambda_{ 2}= \mu$ and
\begin{equation}
\mathcal{ G}( \lambda) = \int_{ \mathbb{ R}} \mathcal{ H}( \lambda^{ \omega} \vert W^{ \omega}) \mu(\dd \omega) - \mathcal{ J}( \lambda)
\end{equation}
Notice that the quantity $ \mathcal{ J}(\lambda)$ (recall \eqref{eq:mcJ}) only depends on $ q$. Therefore, we use the notation 
\begin{equation}
\mathcal{ J}(\lambda) =: \mathscr{ J}(q).
\end{equation}
For $ \mu$-a.e. $ \omega$, $ \lambda^{ \omega}$ minimises $ \mathcal{ H}( \lambda^{ \omega} \vert W^{ \omega})$ under the constraint $ \pi \lambda^{ \omega} = q^{ \omega}$. This implies (\cite{MR983373}, p.~165) that $ \lambda^{ \omega}$ is the law of a diffusion 
\begin{equation}
\label{eq:diff_lambda_om}
\dd x_{ t}^{ \omega}= b^{ \omega}_{ t}(x_{ t}^{ \omega}) \dd t + d w_{ t},
\end{equation} where $ w_{ t}$ is a standard Brownian motion. Girsanov formula implies that
\begin{equation}
\frac{ \dd \lambda^{ \omega}}{ \dd W^{ \omega}}= \exp \left(\int_{ 0}^{T} \left[b^{ \omega}_{ t}(\cdot) - g(\cdot, \omega)\right] \dd \tilde{ w}_{ t} + \frac{ 1}{ 2} \int_{ 0}^{T} \left[b^{ \omega}_{ t}(\cdot) - g(\cdot, \omega)\right]^{ 2} \dd t\right) \frac{ \dd q^{ \omega}_{ 0}}{  \dd \gamma^{ \omega}},
\end{equation}
where $ \tilde{ w}$ is a standard Brownian motion under $ \lambda^{ \omega}$. Hence,
\begin{equation}
\mathcal{ H}( \lambda^{ \omega}\vert W^{ \omega})= \mathcal{ H}( q^{ \omega}_{ 0}\vert \gamma^{ \omega}) + \frac{ 1}{ 2} \int \int_{ 0}^{T} \left[b^{ \omega}_{ t}(x_{ t}) - g(x_{ t}, \omega)\right]^{ 2}\dd t \lambda^{ \omega}(\dd x),
\end{equation}
so that, if $q_{ 0}(\dd x):= \int_{ \mathbb{ R}} q_{ 0}^{ \omega}(\dd x) \mu(\dd \omega)$, using \eqref{eq:Fnew1}
\begin{align*}
\mathscr{ G}_{ 1}(q)&= \mathcal{ H}( q_{ 0}\vert \gamma) + \frac{ 1}{ 2} \int_{ \mathbb{ R}} \int_{ 0}^{T} \int_{ \mathbb{ R}}\left[b_{ t}^{ \omega}(x) - g(x, \omega)\right]^{ 2} q_{ t}^{ \omega}( \dd x)\dd t \mu(\dd \omega) - \mathscr{ J}(q),\\
&= \mathcal{ H}( q_{ 0}\vert \gamma) + \frac{ 1}{ 2} \int_{ \mathbb{ R}} \int_{ 0}^{T} \int_{ \mathbb{ R}}\left[b_{ t}^{ \omega}(x) - g(x, \omega) - \beta^{ q_{ t}, \omega}(x)\right]^{ 2} q_{ t}^{ \omega}( \dd x)\dd t \mu(\dd \omega).
\end{align*}
Recall the definition of $ \mathscr{ K}(q, \omega)$ in \eqref{eq:def_mscK}. Considering tests functions of the form $c \phi$, $c\in \mathbb{ R}$, and taking the supremum on $c$ first and then on $ \phi$ leads to the following alternative representation of $ \mathscr{ K}$:
\begin{equation}
\mathscr{ K}(q, \omega)= \frac{ 1}{ 2}\sup_{ \phi\in \mathcal{ C}_{ 0}^{ \infty}(]0, T[\times \mathbb{ R})} \frac{ \left(\int_{ 0}^{T} \left\langle \phi(t, \cdot)\, ,\, \partial_{ t}q_{ t}^{ \omega} - \mathscr{ L}^{ \omega}q_{ t}^{ \omega}\right\rangle\dd t\right)^{ 2}}{ \int_{ 0}^{T} \left\langle (\partial_{ x} \phi(t, \cdot))^{ 2}\, ,\, q_{ t}^{ \omega}\right\rangle}.
\end{equation}
The flow $q^{ \omega}= \pi \lambda^{ \omega}$, law of the diffusion \eqref{eq:diff_lambda_om}, is the weak solution of the PDE
\begin{equation}
\partial_{ t}q_{ t}^{ \omega} = \frac{ 1}{ 2} \partial_{ x}^{ 2}q_{ t}^{ \omega} - \partial_{ x} \left( b^{ \omega}_{ t}(\cdot) q_{ t}^{ \omega}\right),
\end{equation}
Consequently, an application of Cauchy-Schwartz inequality leads to
\begin{align}
\mathscr{ K}(q, \omega)&= \frac{ 1}{ 2} \sup_{ \phi \in \mathcal{ C}_{ 0}^{ \infty}(]0, T[ \times \mathbb{ R})} \frac{ \left( \int_{ 0}^{T}\left\langle \phi(t, \cdot)\, ,\, -\partial_{ x} \left(q_{ t}^{ \omega} (b_{ t}^{ \omega} - g(\cdot, \omega) - \beta^{ q_{ t}, \omega})\right)\right\rangle \dd t\right)^{ 2}}{ \int_{ 0}^{T} \left\langle (\partial_{ x}\phi(t, \cdot))^{ 2}\, ,\, q_{ t}^{ \omega}\right\rangle \dd t}, \nonumber\\
&=\frac{ 1}{ 2} \sup_{ \phi \in \mathcal{ C}_{ 0}^{ \infty}(]0, T[ \times \mathbb{ R})} \frac{ \left( \int_{ 0}^{T}\left\langle \partial_{ x}\phi(t, \cdot)\, ,\,  q_{ t}^{ \omega} (b_{ t}^{ \omega} - g(\cdot, \omega) - \beta^{ q_{ t}, \omega})\right\rangle \dd t\right)^{ 2}}{ \int_{ 0}^{T} \left\langle (\partial_{ x}\phi(t, \cdot))^{ 2}\, ,\, q_{ t}^{ \omega}\right\rangle \dd t}, \nonumber\\
&\leq \frac{ 1}{ 2} \int_{ 0}^{T}\int_{ \mathbb{ R}}\left[b_{ t}^{ \omega}(x) - g(x, \omega) - \beta^{ q_{ t}, \omega}(x)\right]^{ 2} q_{ t}^{ \omega}( \dd x) \dd t.\label{aux:CS_qt}
\end{align}
Hence, it suffices to prove that \eqref{aux:CS_qt} is actually an equality. This is indeed true as the set $\{ \partial_{ x}\phi, \phi\in \mathcal{ C}_{ 0}^{ \infty}\}$ is dense in $L^{ 2}(q_{ t}^{ \omega}(\dd x) \dd t)$ (see \cite{daiPra96},~p.749 for details).
\end{proof}
Thus, it remains to prove that 
\begin{proposition}
\label{prop:cattiaux_leonard}
Let $q\in \mathcal{ C}([0, T], \mathcal{ M}(\mathbb{ R} \times \mathbb{ R}))$. If $ \mathscr{ G}(q)<\infty$, then $ \mathscr{ G}_{ 1}(q)<\infty$.
\end{proposition}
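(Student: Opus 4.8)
The plan is to deduce Proposition~\ref{prop:cattiaux_leonard} by constructing, from any flow $q$ with $\mathscr{G}(q)<+\infty$, an explicit $\lambda\in\mathcal{Y}$ with $\pi\lambda=q$ and $\mathcal{G}(\lambda)<+\infty$; by \eqref{eq:rate_function_flow_2} this yields $\mathscr{G}_{1}(q)\leq\mathcal{G}(\lambda)<+\infty$, as wanted. The first step is to read off what $\mathscr{G}(q)<+\infty$ gives: $q\in\mathbb{A}$, hence $q_{2}=\mu$ and $q_{t}(\dd x,\dd\omega)=q_{t}^{\omega}(\dd x)\mu(\dd\omega)$ with $t\mapsto q_{t}^{\omega}$ weakly differentiable for $\mu$-a.e.\ $\omega$; moreover $\mathcal{H}(q_{0}\vert\gamma)<+\infty$ and $\int_{\mathbb{R}}\mathscr{K}(q,\omega)\mu(\dd\omega)<+\infty$. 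Since $q_{0}$ and $\gamma$ share the second marginal $\mu$, the density $\dd q_{0}/\dd\gamma$ depends only on the $\omega$-disintegrations, so $\mathcal{H}(q_{0}\vert\gamma)=\int_{\mathbb{R}}\mathcal{H}(q_{0}^{\omega}\vert\gamma^{\omega})\mu(\dd\omega)$; in particular, for $\mu$-a.e.\ $\omega$ one has simultaneously $\mathscr{K}(q,\omega)<+\infty$, $q_{0}^{\omega}\ll\gamma^{\omega}$ and $\mathcal{H}(q_{0}^{\omega}\vert\gamma^{\omega})<+\infty$.

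Next, for $\mu$-a.e.\ fixed $\omega$ I would turn $\mathscr{K}(q,\omega)<+\infty$ into an admissible drift correction, running the argument of the proof of Proposition~\ref{prop:G2_G3_equal} in reverse. Using the density of $\{\partial_{x}\phi:\phi\in\mathcal{C}_{0}^{\infty}(]0,T[\times\mathbb{R})\}$ in $L^{2}(q_{t}^{\omega}(\dd x)\dd t)$ and the elementary Legendre computation $\sup_{\psi}\{\langle h,\psi\rangle-\Vert\psi\Vert^{2}\}=\tfrac14\Vert h\Vert^{2}$ in that Hilbert space, finiteness of $\mathscr{K}(q,\omega)$ forces the distribution $\partial_{t}q_{t}^{\omega}-\mathscr{L}^{\omega}q_{t}^{\omega}$ to be of the form $-\partial_{x}(h_{t}^{\omega}q_{t}^{\omega})$ for some $h^{\omega}\in L^{2}(q_{t}^{\omega}(\dd x)\dd t)$ whose squared norm is a fixed positive multiple of $\mathscr{K}(q,\omega)$; equivalently, $(q_{t}^{\omega})_{t\in[0,T]}$ is a weak solution of the Fokker--Planck equation with drift $g(\cdot,\omega)+\beta^{q_{t},\omega}+h_{t}^{\omega}$ and initial datum $q_{0}^{\omega}$. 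Choosing $h^{\omega}$ to be the minimal-norm representative makes it unique, hence $\omega\mapsto h^{\omega}$ measurable. This puts us in the setting of Cattiaux and L\'eonard \cite{MR1307957} (see also \cite{MR885876}): that finite-energy Fokker--Planck equation is the forward equation of a (weak) diffusion, whose law I denote $\lambda^{\omega}\in\mathcal{M}(\mathcal{E})$; it satisfies $\pi\lambda^{\omega}=q^{\omega}$, and since $\pi\lambda=q$ makes $P^{\lambda,\omega}$ the law of the diffusion with drift $g(\cdot,\omega)+\beta^{q_{t},\omega}$ started from $\gamma^{\omega}$, Girsanov's entropy formula (legitimate because $h^{\omega}\in L^{2}(q_{t}^{\omega}(\dd x)\dd t)$) gives
\[\mathcal{H}(\lambda^{\omega}\vert P^{\lambda,\omega})=\mathcal{H}(q_{0}^{\omega}\vert\gamma^{\omega})+\tfrac12\int_{0}^{T}\int_{\mathbb{R}}h_{t}^{\omega}(x)^{2}q_{t}^{\omega}(\dd x)\dd t,\]
a finite quantity which is, up to a fixed constant, $\mathcal{H}(q_{0}^{\omega}\vert\gamma^{\omega})+\mathscr{K}(q,\omega)$.

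Finally I would glue in $\omega$: set $\lambda(\dd x,\dd\omega):=\lambda^{\omega}(\dd x)\mu(\dd\omega)\in\mathcal{Y}$, so that $\lambda_{2}=\mu$ and, using $q_{t}(\dd x,\dd\omega)=q_{t}^{\omega}(\dd x)\mu(\dd\omega)$, $(\pi\lambda)_{t}(A\times B)=\int_{B}\lambda^{\omega}(\{x_{t}\in A\})\mu(\dd\omega)=\int_{B}q_{t}^{\omega}(A)\mu(\dd\omega)=q_{t}(A\times B)$ for all Borel $A,B$, i.e.\ $\pi\lambda=q$. Integrating the displayed identity against $\mu$ and invoking the first step,
\[\mathcal{G}(\lambda)=\int_{\mathbb{R}}\mathcal{H}(\lambda^{\omega}\vert P^{\lambda,\omega})\mu(\dd\omega)=\mathcal{H}(q_{0}\vert\gamma)+C\int_{\mathbb{R}}\mathscr{K}(q,\omega)\mu(\dd\omega)<+\infty\]
for a fixed constant $C$, whence $\mathscr{G}_{1}(q)<+\infty$ (the exact value of $\mathscr{G}_{1}(q)$ being then supplied by Proposition~\ref{prop:G2_G3_equal}).

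I expect the heart of the matter to be the middle step: converting finiteness of $\mathscr{K}(q,\omega)$ into a genuine weak diffusion whose one-time marginal flow equals $q^{\omega}$ at entropy cost controlled by $\mathscr{K}(q,\omega)$ --- this is exactly the reconstruction theorem of Cattiaux--L\'eonard that lends the proposition its name --- together with the measurable-selection point ensuring that $\omega\mapsto\lambda^{\omega}$ is measurable, so that $\lambda^{\omega}(\dd x)\mu(\dd\omega)$ is a legitimate element of $\mathcal{Y}$. The remaining ingredients (unpacking $\mathbb{A}$, the marginal relative-entropy identity, the Girsanov entropy formula for a merely $L^{2}$ drift correction, and the gluing) are routine.
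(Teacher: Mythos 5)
Your proposal follows essentially the same route as the paper: for $\mu$-a.e.\ $\omega$ you convert the finiteness of $\mathscr{K}(q,\omega)$, via the density of $\{\partial_{x}\phi,\ \phi\in\mathcal{C}_{0}^{\infty}\}$ in $L^{2}(q_{t}^{\omega}(\dd x)\dd t)$ and a Riesz/Legendre argument, into an $L^{2}$ drift correction $h^{\omega}$ (the paper's $B^{\omega}\in H_{0}^{-1}(q^{\omega})$) with $\tfrac12\Vert h^{\omega}\Vert_{L^{2}(q^{\omega})}^{2}$ controlled by $\mathscr{K}(q,\omega)$, invoke the Cattiaux--L\'eonard reconstruction (\cite{MR1262893}, Th.~5.9) to produce $\lambda^{\omega}$ with $\pi\lambda^{\omega}=q^{\omega}$ and entropy relative to $P^{q,\omega}$ given by that squared norm, and glue $\lambda(\dd x,\dd\omega)=\lambda^{\omega}(\dd x)\mu(\dd\omega)$ to get $\mathscr{G}_{1}(q)\leq\mathcal{G}(\lambda)<\infty$; this is exactly the paper's proof, including the measurability point in $\omega$. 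The one step where your write-up outruns the hypotheses is the initial-time bookkeeping: the identity $\mathcal{H}(q_{0}\vert\gamma)=\int_{\mathbb{R}}\mathcal{H}(q_{0}^{\omega}\vert\gamma^{\omega})\mu(\dd\omega)$ is correct (chain rule) only if $q_{0}$ and $\gamma$ are read as the joint measures $q_{0}^{\omega}(\dd x)\mu(\dd\omega)$ and $\gamma^{\omega}(\dd x)\mu(\dd\omega)$; with the literal mixture definitions \eqref{eq:q_zero} and \eqref{eq:gamma} one only has $\mathcal{H}(q_{0}\vert\gamma)\leq\int_{\mathbb{R}}\mathcal{H}(q_{0}^{\omega}\vert\gamma^{\omega})\mu(\dd\omega)$, which is the unhelpful direction, and your Girsanov formula $\mathcal{H}(\lambda^{\omega}\vert P^{\lambda,\omega})=\mathcal{H}(q_{0}^{\omega}\vert\gamma^{\omega})+\tfrac12\Vert h^{\omega}\Vert^{2}$ then needs $\int_{\mathbb{R}}\mathcal{H}(q_{0}^{\omega}\vert\gamma^{\omega})\mu(\dd\omega)<\infty$, which $\mathscr{G}(q)<\infty$ delivers only under the joint-measure reading. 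The paper avoids disintegrating the initial entropy altogether: it quotes the Cattiaux--L\'eonard identity in the form $\mathcal{H}(\lambda_{\ast}^{\omega}\vert P^{q,\omega})=\tfrac12\Vert B^{\omega}\Vert^{2}\leq\mathscr{K}(q,\omega)$ and concludes directly $\mathcal{G}(\lambda_{\ast})\leq\int_{\mathbb{R}}\mathscr{K}(q,\omega)\mu(\dd\omega)\leq\mathscr{G}(q)$. So your argument is the same in substance, and the discrepancy is as much an ambiguity in the statement of \eqref{eq:rate_function_mscG} as a flaw in your reasoning, but as written your final display relies on an equality that the paper's literal definitions do not justify.
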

Proposition~\ref{prop:cattiaux_leonard} is actually a difficult result, which is related to the existence of Nelson processes for diffusions, and is now well covered in the literature (see \cite{MR1262893,MR1307957} and references therein). An alternative proof for this result would be to use the  of Dawson and G\"artner \cite{MR885876} (see also \cite{Muller:2015aa}), that requires several alternative representations of the rate function $ \mathscr{ G}$ (see in particular \cite{MR885876}, \S~4.3 and 4.4). Although the techniques in \cite{MR885876} look perfectly applicable to our disordered case, the approach of \cite{MR1262893} seems to be more direct here.
\begin{proof}[Proof of Proposition~\ref{prop:cattiaux_leonard}]
Most of the ingredients of the following proof are contained in \cite{MR1262893} (see in particular the proof of Theorem~5.9). Note that the functional $ \mathscr{ G}$ in \eqref{eq:rate_function_mscG} can be written as
\begin{equation}
\mathscr{ G}(q)= \begin{cases}
\mathcal{ H}(q_{ 0}\vert \gamma) + \int_{ \mathbb{ R}} \mathscr{ K}(q, u) \mu(\dd u)& \text{ if } q\in \mathbb{ A},\\
+\infty& \text{ otherwise},
\end{cases}
\end{equation}
where $ \mathscr{ K}$ defined in \eqref{eq:def_mscK} can be alternatively given by, for $ \mu$-almost every $u$,
\begin{align}
\mathscr{ K}(q, u)&= \sup_{ \phi\in \mathcal{ C}_{ 0}^{ \infty}(]0, T[ \times \mathbb{ R})} \Big\lbrace \int \phi(x, T) q_{ T}^{ u}(\dd x) - \int \phi(x, 0) q_{ 0}^{ u}(\dd x) \nonumber\\ &- \int_{ 0}^{T} \int \left( \partial_{ t} + (\mathscr{ L}^{ u})^{ \ast}\right)\phi(x, t) q_{ t}^{ u}( \dd x)\dd t - \frac{ 1}{ 2} \int_{ 0}^{T}\int (\partial_{ x} \phi(x, t))^{ 2} q_{ t}^{ u}(\dd x)\dd t\Big\rbrace.
\end{align}
Suppose now that $ \mathscr{ G}(q)<\infty$. Then, one can write $q: t \mapsto q_{ t}^{ u}(\dd x) \mu(\dd u)$ and $ \mathscr{ K}(q, u)<\infty$ for $ \mu$-almost every $u$. In particular, there exists a constant $C_{ u}>0$ such that for all test function $ \phi$,
\begin{align*}
\mathcal{ U}(\phi)&:= \int \phi(x, T) q_{ T}^{ u}(\dd x) - \int \phi(x, 0) q_{ 0}^{ u}(\dd x) - \int_{ 0}^{T} \int \left(\partial_{ t} + (\mathscr{ L}^{ u})^{ \ast}\right)\phi(x, t) q_{ t}^{ u}(\dd x) \dd t,
\end{align*}is such that
\begin{align*}
\mathcal{ U}(\phi) \leq C_{ u} + \frac{ 1}{ 2} \left\Vert \partial_{ x}\phi \right\Vert_{ L^{ 2}_{ q^{ u}}}^{ 2}.
\end{align*}Since $ \mathcal{ U}$ is linear, this implies that $ \mathcal{ U}$ can be extended to a continuous linear form on the completion of $ \mathcal{ C}_{ 0}^{ \infty}(]0, T[\times \mathbb{ R})$ equipped with the seminorm $ \left\Vert \partial_{ x}\phi \right\Vert_{ L_{ q^{ u}}^{ 2}}$. We denote by $H_{ 0}^{ -1}(q^{ u})$ this completion. By Riesz representation theorem, there exists $B^{ u}\in H_{ 0}^{ -1}(q^{ u})$ such for all $ \phi\in \mathcal{ C}_{ 0}^{ \infty}(]0, T[ \times \mathbb{ R})$, 
\begin{multline}
\int \phi(x, T) q_{ T}^{ u}(\dd x) - \int \phi(x, 0) q_{ 0}^{ u}(\dd x) - \int_{ 0}^{T} \int \left(\partial_{ t} + \left( \mathscr{ L}^{ u}\right)^{ \ast}\right) \phi(x, t) q_{ t}^{ u}(\dd x) \dd t \\= \int_{ 0}^{T} \int \left(\partial_{ x} \phi B^{ u}\right)(x, t) q_{ t}^{ u}(\dd x) \dd t.
\end{multline}
In particular, 
\begin{align}
\mathscr{ K}(q, u) = \sup_{ \phi\in \mathcal{ C}_{ 0}^{ \infty}(]0, T[ \times \mathbb{ R})} \left\lbrace \left\langle B^{ u}\, ,\, \partial_{ x} \phi\right\rangle_{ L_{ q^{ u}}^{ 2}} - \frac{ 1}{ 2} \left\Vert \partial_{ x}\phi \right\Vert_{ L_{ q^{ u}}^{ 2}}^{ 2}\right\rbrace \geq \frac{ 1}{ 2} \left\Vert B^{ u} \right\Vert_{ L_{ q^{ u}}^{ 2}}^{ 2}. \label{aux:mscK_B}
\end{align}
Following the same procedure as in \cite{MR1262893}, Th. 5.9, 3) (ii) (see also \cite{MR1262893} (2.6)), it is possible to show that there exists $ \lambda_{ \ast}^{ u}$ such that $ \pi \lambda_{ \ast}^{ u}= q_{ u}$ and $ \mathcal{ H}( \lambda_{ \ast}^{ u}\vert P^{ q, u})= \frac{ 1}{ 2} \left\Vert B^{ u} \right\Vert_{ L_{ q^{ u}}^{ 2}}^{ 2}$. From \eqref{aux:mscK_B}, we obtain that $ \mathscr{ K}(q, u) \geq  \mathcal{ H}( \lambda_{ \ast}^{ u}\vert P^{ q, u})$ so that $ \infty > \mathscr{ G}(q) \geq  \mathcal{ G}( \lambda_{ \ast})$ for $ \lambda_{ \ast}= \lambda_{ \ast}^{ u} \mu(\dd u)$. Thus, $ \mathscr{ G}_{ 1}(q)< \infty$.
\end{proof}
\appendix
\section{ Proof of the quenched Sanov Theorem}
\label{sec:proof_Sanov}
We sketch the proof of Proposition~\ref{prop:leonard}. Recall the notations of Section~\ref{sec:abstract_sanov}. Proposition~\ref{prop:leonard} uses large deviation techniques for projective limits developed by Dawson and G\"artner in \cite{MR885876}. The crucial result is:
\begin{theorem}[(\cite{MR885876}, Th. 3.4)]
\label{theo:DawsonGaertner}
Let $ \left\lbrace m_{ N}\right\rbrace_{ N\geq1}$ a sequence of probability measures on $ \mathcal{ Y}$ and $ \left\lbrace\gamma_{ N}\right\rbrace_{ N\geq1}$ a sequence of probability measures such that $ \lim_{ N\to \infty} \gamma_{ N}= \infty$. Suppose that the following conditions are satisfied:
\begin{enumerate}
\item for each $ \phi\in \mathcal{ W}$, the limit 
\begin{equation}
\label{eq:Lambda_phi}
\Lambda( \phi):= \lim_{ N\to \infty} \frac{ 1}{ \gamma_{ N}} \ln \int_{ \mathcal{ Y}} \exp \left( \gamma_{ N} \left\langle \lambda\, ,\, \phi\right\rangle\right) m_{ N}(\dd \lambda)
\end{equation}exists and is finite,
\item the function $ \Lambda$ is G\^ateaux differentiable.

Define
\begin{equation}
\label{eq:Lambda_ast}
\Lambda^{ \ast}( \lambda):= \sup_{ \phi \in \mathcal{ W}} \left\lbrace \left\langle \lambda\, ,\, \phi\right\rangle - \Lambda(\phi)\right\rbrace,\ \lambda\in \mathcal{ X},
\end{equation}
and suppose further that
\item
$\{ \lambda\in \mathcal{ X},\ \Lambda^{ \ast}( \lambda)< +\infty\} \subset \mathcal{ Y}$,
\end{enumerate}
then $ \left\lbrace m_{ N}\right\rbrace_{ N\geq1}$ satisfies a strong large deviation principle in $ \mathcal{ Y}$ with speed $ \gamma_{ N}$ with rate function given by the restriction of $ \Lambda^{ \ast}$ to $ \mathcal{ Y}$.
\end{theorem}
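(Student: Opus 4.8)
The plan is to turn Theorem~\ref{theo:DawsonGaertner} into a finite-dimensional statement, dispatch the latter with the G\"artner--Ellis theorem, glue the finite-dimensional large deviation principles together via the abstract LDP for projective limits (\cite[Th.~4.6.1]{Dembo1998}), and finally transfer the resulting LDP from the algebraic dual $\mathcal{X}$ to the set of probability measures $\mathcal{Y}$ --- the last step being exactly what hypothesis~(3) is for. The key structural remark is that $\mathcal{X}$, the algebraic dual of $\mathcal{W}=\mathcal{C}_{b}(\mathcal{E}\times\mathcal{F})$ endowed with the $\ast$-weak topology $\sigma(\mathcal{X},\mathcal{W})$, is canonically a projective limit of Euclidean spaces: after choosing a Hamel basis indexed by a set $A$ one has $(\mathcal{X},\sigma(\mathcal{X},\mathcal{W}))\cong\mathbb{R}^{A}$ with the product topology, equivalently $\mathcal{X}=\varprojlim_{V}\mathbb{R}^{\dim V}$ over the directed family of finite-dimensional subspaces $V\subset\mathcal{W}$ ordered by inclusion, the projection attached to $V=\mathrm{span}(\phi_{1},\ldots,\phi_{d})$ being the $\sigma(\mathcal{X},\mathcal{W})$-continuous map $p_{V}:\lambda\mapsto(\langle\lambda\,,\,\phi_{1}\rangle,\ldots,\langle\lambda\,,\,\phi_{d}\rangle)\in\mathbb{R}^{d}$, with the obvious linear surjections as connecting maps.

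Next I would treat each finite-dimensional projection separately. Fix $V=\mathrm{span}(\phi_{1},\ldots,\phi_{d})$ and set $\mu_{N}^{V}:=m_{N}\circ p_{V}^{-1}\in\mathcal{M}(\mathbb{R}^{d})$. For $t\in\mathbb{R}^{d}$,
\begin{equation*}
\frac{1}{\gamma_{N}}\ln \int_{\mathbb{R}^{d}} e^{\gamma_{N}\langle t, y\rangle}\, \mu_{N}^{V}(\dd y) = \frac{1}{\gamma_{N}}\ln \int_{\mathcal{Y}} \exp\!\left(\gamma_{N}\left\langle \lambda\,,\, \sum_{j} t_{j}\phi_{j}\right\rangle\right) m_{N}(\dd \lambda) \xrightarrow[N\to\infty]{} \Lambda\!\left(\sum_{j} t_{j}\phi_{j}\right) =: \Lambda_{V}(t),
\end{equation*}
by hypothesis~(1), and $\Lambda_{V}$ is finite on all of $\mathbb{R}^{d}$. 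As a pointwise limit of convex functions $\Lambda_{V}$ is convex, hence continuous; moreover $\partial_{t_{k}}\Lambda_{V}(t)$ is the G\^ateaux derivative of $\Lambda$ at $\sum_{j}t_{j}\phi_{j}$ in the direction $\phi_{k}$, which exists by hypothesis~(2), so $\Lambda_{V}$ is differentiable on $\mathbb{R}^{d}$. A finite, differentiable convex function on $\mathbb{R}^{d}$ is lower semicontinuous and essentially smooth (steepness being vacuous in the absence of a boundary of the effective domain), so the G\"artner--Ellis theorem \cite[Th.~2.3.6]{Dembo1998} yields a \emph{full} LDP for $\{\mu_{N}^{V}\}_{N\geq1}$ on $\mathbb{R}^{d}$ at speed $\gamma_{N}$ with good rate function $\Lambda_{V}^{\ast}(y)=\sup_{t\in\mathbb{R}^{d}}\left\lbrace\langle t\,,\,y\rangle-\Lambda_{V}(t)\right\rbrace$.

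Feeding this into the projective-limit LDP \cite[Th.~4.6.1]{Dembo1998}: since $\{m_{N}\circ p_{V}^{-1}\}_{N\geq1}$ satisfies an LDP with good rate function $\Lambda_{V}^{\ast}$ for every $V$, the sequence $\{m_{N}\}_{N\geq1}$ satisfies a strong LDP on $\mathcal{X}$ at speed $\gamma_{N}$ with good rate function $I(\lambda):=\sup_{V}\Lambda_{V}^{\ast}(p_{V}\lambda)$. A direct computation with Legendre transforms identifies $I$ with $\Lambda^{\ast}$ of \eqref{eq:Lambda_ast}:
\begin{equation*}
\sup_{V} \Lambda_{V}^{\ast}(p_{V}\lambda) = \sup_{V} \sup_{t\in\mathbb{R}^{\dim V}} \left\lbrace \langle t\,,\, p_{V}\lambda\rangle - \Lambda_{V}(t)\right\rbrace = \sup_{V} \sup_{\phi\in V} \left\lbrace \langle \lambda\,,\, \phi\rangle - \Lambda(\phi)\right\rbrace = \sup_{\phi\in\mathcal{W}} \left\lbrace \langle \lambda\,,\, \phi\rangle - \Lambda(\phi)\right\rbrace = \Lambda^{\ast}(\lambda),
\end{equation*}
the last equality because every $\phi\in\mathcal{W}$ lies in some finite-dimensional $V$. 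It then remains to restrict the LDP from $\mathcal{X}$ to the topological subspace $\mathcal{Y}$. Each $m_{N}$ is carried by $\mathcal{Y}$; for $F$ closed (resp. $G$ open) in $\mathcal{Y}$ one may write $F=\widetilde{F}\cap\mathcal{Y}$ (resp. $G=\widetilde{G}\cap\mathcal{Y}$) with $\widetilde{F}$ closed (resp. $\widetilde{G}$ open) in $\mathcal{X}$, so that $m_{N}(F)=m_{N}(\widetilde{F})$ and $m_{N}(G)=m_{N}(\widetilde{G})$; by hypothesis~(3), $\left\lbrace\Lambda^{\ast}<+\infty\right\rbrace\subset\mathcal{Y}$, hence $\inf_{\widetilde{F}}\Lambda^{\ast}=\inf_{F}\Lambda^{\ast}$ and $\inf_{\widetilde{G}}\Lambda^{\ast}=\inf_{G}\Lambda^{\ast}$, and the LDP bounds on $\mathcal{X}$ become the LDP bounds on $\mathcal{Y}$ (see \cite[\S~4.1]{Dembo1998}). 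Goodness survives since each level set $\left\lbrace\Lambda^{\ast}\leq\alpha\right\rbrace$, compact in $\mathcal{X}$ and contained in $\mathcal{Y}$, is compact for the subspace topology of $\mathcal{Y}$.

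In my view the delicate points are not the finite-dimensional estimates --- which reduce essentially verbatim to G\"artner--Ellis --- but rather, first, the verification that $(\mathcal{X},\sigma(\mathcal{X},\mathcal{W}))$ genuinely \emph{is} the projective limit $\varprojlim_{V}\mathbb{R}^{\dim V}$, which is precisely where working with the \emph{algebraic} dual pays off, and second, the final restriction step: a $\sigma(\mathcal{X},\mathcal{W})$-limit of probability measures may fail to be countably additive, so $\mathcal{Y}$ need not be closed in $\mathcal{X}$, and hypothesis~(3) is the indispensable input ensuring that the LDP nonetheless concentrates on $\mathcal{Y}$. I would also be careful to record that finiteness of $\Lambda_{V}$ everywhere makes the steepness condition in G\"artner--Ellis automatic, so that one really obtains the \emph{full} LDP on each $\mathbb{R}^{d}$ --- which is what \cite[Th.~4.6.1]{Dembo1998} consumes --- and not merely a weak one.
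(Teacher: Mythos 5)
Your argument is correct, and since the paper does not prove this statement itself (it is quoted from Dawson and G\"artner \cite{MR885876}, Th.~3.4), the relevant comparison is with the cited sources: your route is exactly the canonical one, namely finite-dimensional projections $p_{V}$ treated by the G\"artner--Ellis theorem (finiteness of $\Lambda_{V}$ on all of $\mathbb{R}^{d}$ making essential smoothness automatic), the Dawson--G\"artner projective-limit theorem \cite[Th.~4.6.1]{Dembo1998} to obtain the LDP on the algebraic dual $\mathcal{X}$ with rate $\Lambda^{\ast}$, and hypothesis~(3) (via the standard relative-LDP lemma, \cite[Lem.~4.1.5]{Dembo1998}) to restrict the LDP and the goodness of $\Lambda^{\ast}$ to $\mathcal{Y}$. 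This coincides with the treatment of the abstract G\"artner--Ellis theorem in \cite[\S~4.6]{Dembo1998}, so there is nothing to add beyond noting that your step ``partial derivatives of a finite convex function imply differentiability'' is the standard convex-analysis fact implicitly invoked there as well.
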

Let us verify the assumptions of Theorem~\ref{theo:DawsonGaertner} in the context of Proposition~\ref{prop:leonard}: we first prove the existence of the limit $ \Lambda(\cdot)$ in \eqref{eq:Lambda_phi}. Fix $ \phi\in \mathcal{ W}=  \mathcal{ C}_{ b}( \mathcal{ E}\times \mathcal{ F})$. For all $N\geq1$, using the independence of the $\{x_{ i}\}$,  
\begin{align*}
\Lambda_{ N}( \phi)&:= \frac{ 1}{ N} \ln \int_{ \mathcal{ Y}} \exp( N \left\langle \lambda\, ,\, \phi\right\rangle) m_{ N}(\dd \lambda),\\
&= \frac{ 1}{ N} \ln \mathbf{ E}_{ \otimes_{ i=1}^{ \infty} \rho^{ u_{ i}}} \left( \exp\left(\sum_{ i=1}^{ N} \phi(x_{ i}, u_{ i})\right)\right),\\
&=\frac{ 1}{ N} \sum_{ i=1}^{ N}\ln \int_{ \mathcal{ E}}\exp\left(\phi(x, u_{ i})\right) \rho^{ u_{ i}}(\dd x).
\end{align*}
Since $ u \mapsto \ln \int_{ \mathcal{ E}} \exp( \phi(x, u)) \rho^{ u}(\dd x)$ is continuous and bounded, assumption \eqref{eq:hyp_ui} implies that
\begin{equation}
\Lambda_{ n}( \phi) \to \Lambda(\phi):= \int_{ \mathcal{ F}}  \ln \int_{ \mathcal{ E}} \exp( \phi(x, u)) \rho^{ u}(\dd x) \nu(\dd u).
\end{equation}
Note that $ \left\vert \Lambda(\phi) \right\vert \leq \left\Vert \phi \right\Vert_{ \infty}$ for all $ \phi\in \mathcal{ W}$, so $ \Lambda( \phi)$ is finite for all $ \phi\in \mathcal{ W}$.

Moreover, $ \Lambda$ is G\^ateaux differentiable: it is easy to see that for all $ \phi, \psi$, $t\in \mathbb{ R}$, $ t \mapsto \Lambda( \phi+ t \psi)$ is differentiable in $t$ and
\begin{align}
\frac{ \dd}{ \dd t} \left[ \Lambda( \phi+ t \psi)\right]= \int_{ \mathcal{ F}}  \frac{ \int_{ \mathcal{ E}} \psi(x, u) e^{ \phi(x, u) + t \psi(x, u)} \rho^{ u}(\dd x) }{ \int_{ \mathcal{ E}} e^{ \phi(x, u) + t \psi(x, u)} \rho^{ u}(\dd x)}\nu(\dd u).
\end{align}
Finally, let $ \lambda\in \mathcal{ X}$ such that $ \Lambda^{ \ast}(\lambda)<+\infty$. Considering test functions in \eqref{eq:Lambda_ast} of the form $a \phi_{ 0}$ with $a\leq 0$ and $ \phi_{ 0}\geq0$, one obtains that $ \left\langle \lambda\, ,\, \phi_{ 0}\right\rangle\geq0$ for every $ \phi_{ 0}\geq0$, which gives $ \lambda\geq0$.
Moreover, taking constant function $ \phi\equiv c$ one obtains $ \left\langle \lambda\, ,\, 1\right\rangle=1$.
Finally, for any sequence $\left\lbrace \phi_{ n}\right\rbrace$ in $ \mathcal{ W}$ such that $ \phi_{ n}\geq0$ for all $n$ and $ \phi_{ n}(x, u)$ decreases to $0$ for fixed $(x, u)\in \mathcal{ E}\times \mathcal{ F}$, it is direct to see that $\lim_{ n\to \infty} \left\langle \lambda\, ,\, \phi_{ n}\right\rangle= \limsup_{ n\to\infty} \left\langle \lambda\, ,\, \phi_{ n}\right\rangle=0$. Therefore, one can uniquely identify $ \lambda$ with an element of $ \mathcal{ Y}$ (see \cite{MR0272004}, Proposition II-7-2).
Furthermore, the following holds
\begin{equation}
\label{eq:marg_lambda}
\forall \lambda\in \mathcal{ X},\ \Lambda^{ \ast}(\lambda)<+\infty \Rightarrow \lambda_{ 2}= \nu.
\end{equation}
Indeed, taking $ \phi(x, u)= h(u)$ independent of $x$, 
\begin{align*}
\Lambda^{ \ast}(\lambda) &\geq \sup_{ h\in \mathcal{ C}_{ b}(\mathcal{ F})}\left\lbrace \left\langle \lambda\, ,\, h\right\rangle - \left\langle \nu\, ,\, h\right\rangle\right\rbrace,\\
&= \begin{cases} 0 & \text{ if } \lambda_{ 2}= \nu,\\
+\infty & \text{ otherwise},
\end{cases}
\end{align*}
which gives \eqref{eq:marg_lambda}.
\begin{lemma}
\label{lem:IH}
For any measure $ \nu\in \mathcal{ Y}$, extend the definition of the relative entropy $ m \in \mathcal{ Y}\mapsto H(m \vert \nu)$ to $ \mathcal{ X}$ by setting $H(\cdot \vert \nu)= +\infty$ outside $ \mathcal{ Y}$. Then, for all $ \lambda\in \mathcal{ X}$,
\begin{equation}
I(\lambda) = \Lambda^{ \ast}(\lambda)
\end{equation}
where $ \mathcal{ I}$ is defined in \eqref{eq:gen_rate_I}.
\end{lemma}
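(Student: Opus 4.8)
The plan is to prove the two inequalities $\Lambda^{\ast}(\lambda)\le \mathcal{I}_{\mu}(\lambda)$ and $\Lambda^{\ast}(\lambda)\ge \mathcal{I}_{\mu}(\lambda)$ for every $\lambda\in\mathcal{X}$, relying on the Donsker--Varadhan variational characterization of relative entropy (\cite{Dembo1998}, Lemma~6.2.13) together with the additivity of relative entropy under disintegration. First I would dispose of the degenerate $\lambda$: by the discussion preceding the lemma, $\Lambda^{\ast}(\lambda)<+\infty$ forces $\lambda\in\mathcal{Y}$, and \eqref{eq:marg_lambda} forces $\lambda_{2}=\nu$; conversely $\mathcal{I}_{\mu}(\lambda)=+\infty$ as soon as $\lambda\notin\mathcal{Y}$ or $\lambda_{2}\neq\nu$. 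Hence both functionals equal $+\infty$ on the complement of $\{\lambda\in\mathcal{Y}:\lambda_{2}=\nu\}$, and it remains to check equality on that set. Fix such a $\lambda$, write the disintegration $\lambda(\dd x,\dd u)=\lambda^{u}(\dd x)\,\nu(\dd u)$, and recall that, the second marginal of both $\lambda$ and of $\rho^{u}(\dd x)\nu(\dd u)$ being $\nu$, the chain rule for relative entropy gives
\[
\mathcal{I}_{\mu}(\lambda)=\mathcal{H}\bigl(\lambda\,\big|\,\rho^{u}\times\nu\bigr)=\int_{\mathcal{F}}\mathcal{H}\bigl(\lambda^{u}\,\big|\,\rho^{u}\bigr)\,\nu(\dd u).
\]

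For the upper bound $\Lambda^{\ast}\le\mathcal{I}_{\mu}$, I would fix $\phi\in\mathcal{W}=\mathcal{C}_{b}(\mathcal{E}\times\mathcal{F})$ and use the disintegration to write
\[
\langle\lambda,\phi\rangle-\Lambda(\phi)=\int_{\mathcal{F}}\left(\int_{\mathcal{E}}\phi(x,u)\,\lambda^{u}(\dd x)-\ln\int_{\mathcal{E}}e^{\phi(x,u)}\,\rho^{u}(\dd x)\right)\nu(\dd u).
\]
For every $u$ the map $x\mapsto\phi(x,u)$ lies in $\mathcal{C}_{b}(\mathcal{E})$, so the bracket is bounded above by $\mathcal{H}(\lambda^{u}\,|\,\rho^{u})$ by the variational formula; integrating in $u$ and taking the supremum over $\phi$ gives $\Lambda^{\ast}(\lambda)\le\mathcal{I}_{\mu}(\lambda)$. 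One technical point to address here is the measurability in $u$ of the quantities involved (in particular of $u\mapsto\mathcal{H}(\lambda^{u}\,|\,\rho^{u})$), which follows from the regularity of the disintegration and the Feller property of $u\mapsto\rho^{u}$, the entropy being a countable supremum of measurable functionals.

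For the lower bound, the key remark is that Jensen's inequality applied to the concave function $\ln$ yields, for every $\phi\in\mathcal{W}$,
\[
\Lambda(\phi)=\int_{\mathcal{F}}\ln\left(\int_{\mathcal{E}}e^{\phi(x,u)}\rho^{u}(\dd x)\right)\nu(\dd u)\le\ln\left(\int_{\mathcal{E}\times\mathcal{F}}e^{\phi(x,u)}\,\rho^{u}(\dd x)\nu(\dd u)\right),
\]
so that $\langle\lambda,\phi\rangle-\Lambda(\phi)\ge\langle\lambda,\phi\rangle-\ln\int e^{\phi}\,\dd(\rho^{u}\times\nu)$. Taking the supremum over $\phi\in\mathcal{W}$ and invoking the Donsker--Varadhan formula once more --- now on the Polish space $\mathcal{E}\times\mathcal{F}$ with reference probability measure $\rho^{u}(\dd x)\nu(\dd u)$ --- gives $\Lambda^{\ast}(\lambda)\ge\mathcal{H}(\lambda\,|\,\rho^{u}\times\nu)=\mathcal{I}_{\mu}(\lambda)$. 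Combining the two bounds proves the lemma; together with Theorem~\ref{theo:DawsonGaertner} and the verification of its hypotheses carried out above, this completes the proof of Proposition~\ref{prop:leonard}.

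I expect the only genuinely delicate ingredients to be the chain rule for relative entropy under disintegration and the measurable dependence of $\mathcal{H}(\lambda^{u}\,|\,\rho^{u})$ on $u$; both are classical, and once they are in place the statement is a routine exercise in convex duality. (Alternatively, one could identify $\mathcal{I}_{\mu}$ with $\Lambda^{\ast}$ in one stroke by checking $\Lambda=\mathcal{I}_{\mu}^{\ast}$ via the Gibbs variational principle and a measurable selection, then concluding by Fenchel--Moreau, but the two-inequality route above seems more transparent.)
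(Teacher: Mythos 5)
Your proposal is correct and follows essentially the same route as the paper: the upper bound $\Lambda^{\ast}\le \mathcal{I}$ via disintegration and the Donsker--Varadhan formula applied fibrewise in $u$ (\cite{Dembo1998}, Lemma~6.2.13), and the lower bound via Jensen's inequality on $\Lambda(\phi)$ followed by the variational formula on the product space $\mathcal{E}\times\mathcal{F}$, with the degenerate cases handled through \eqref{eq:marg_lambda} and the identification $\Lambda^{\ast}(\lambda)<+\infty\Rightarrow\lambda\in\mathcal{Y}$. Your explicit remark on the measurability of $u\mapsto\mathcal{H}(\lambda^{u}\,\vert\,\rho^{u})$ is a point the paper leaves implicit, but it is standard and does not change the argument.
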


\begin{proof}[Proof of Lemma~\ref{lem:IH}]
 Let us prove that for all $ \lambda \in \mathcal{ X}$,
 \begin{equation}
\Lambda^{ \ast}(\lambda) \leq I(\lambda).
 \end{equation}
It suffices to consider $ \lambda\in \mathcal{ X}$ such that $I(\lambda)<+\infty$. This implies in particular that $ \lambda\in \mathcal{ Y}$ with $ \lambda_{ 2}(\dd u) = \nu(\dd u)$. Since $ \mathcal{ E} \times \mathcal{ F}$ is a Polish space, there exists a disintegration of $ \lambda$ in terms of
\begin{equation}
\label{eq:dis_lambda}
\lambda( \dd x, \dd u)= \lambda^{ u}(\dd x) \lambda_{ 2}(\dd u)
\end{equation} where $ u \mapsto \lambda^{ u}(\dd x)$ is measurable. For such a $ \lambda$,
\begin{align}
\Lambda^{ \ast}(\lambda) &= \sup_{ \phi\in \mathcal{ W}} \left\lbrace \int_{ \mathcal{ F}} \nu(\dd u) \left(\int_{ \mathcal{ E}} \phi(x, u) \lambda^{ u}(\dd x) - \ln \int_{ \mathcal{ E}} \exp( \phi(x, u)) \rho^{ u}(\dd x) \right)\right\rbrace,\nonumber\\
&\leq \int_{ \mathcal{ F}} \nu(\dd u) \sup_{ f\in \mathcal{ C}_{ b}(\mathcal{ E})} \left\lbrace \int_{ \mathcal{ E}} f(x) \lambda^{ u}(\dd x) - \ln \int_{ \mathcal{ E}} \exp(f(x)) \rho^{ u}(\dd x) \right\rbrace, \nonumber\\
&= \int_{ \mathcal{ F}} \nu(\dd u) H \left(\lambda^{ u} \vert \rho^{ u}\right),\label{aux:repr_H}\\
&= \int_{ \mathcal{ F}} \nu(\dd u) H \left(\lambda^{ u} \vert \rho^{ u}\right) + \underbrace{H(\lambda_{ 2} \vert \nu)}_{ =0},\ \text{ since } \lambda_{ 2}= \nu, \nonumber\\
&= H(\lambda\vert \rho^{ u}\times \nu)= I(\lambda). \nonumber
\end{align}
where we used \cite{Dembo1998}, Lemma~6.2.13 in \eqref{aux:repr_H}.
Let us prove the converse inequality: $I(\lambda) \leq \Lambda^{ \ast}(\lambda)$. One can again suppose that $ \Lambda^{ \ast}(\lambda)<+\infty$, so that $ \lambda\in \mathcal{ Y}$ and the disintegration \eqref{eq:dis_lambda} holds, with $ \lambda_{ 2}= \nu$, by \eqref{eq:marg_lambda}. By Jensen's inequality, for any $ \phi\in \mathcal{ W}$, \begin{equation}
\Lambda(\phi) \leq \ln \int_{ \mathcal{ E}\times \mathcal{ F}} e^{ \phi(x, u)} \rho^{ u}(\dd x) \nu(\dd u)
\end{equation}  so that, for any such $ \lambda$
\begin{align*}
\Lambda^{ \ast}(\lambda)& \geq \sup_{ \phi\in \mathcal{ W}} \left\lbrace \int_{ \mathcal{ E} \times\mathcal{ F}}\phi(x, u) \lambda^{ u}(\dd x) \nu(\dd u)- \ln \int_{ \mathcal{ E}\times\mathcal{ F}} \exp( \phi(x, u)) \rho^{ u}(\dd x)\nu(\dd u) \right\rbrace,\\
&= H( \lambda \vert \rho^{ u}\times \nu)= I(\lambda).\qedhere
\end{align*}
\end{proof}


\begin{thebibliography}{10}

\bibitem{Acebron2005}
J.~A. Acebr{\'o}n, L.~L. Bonilla, C.~J. P\'erez~Vicente, F.~Ritort, and
  R.~Spigler.
\newblock The {K}uramoto model: A simple paradigm for synchronization
  phenomena.
\newblock {\em Rev. Mod. Phys.}, 77:137--185, Apr 2005.

\bibitem{Bertini:2013aa}
L.~Bertini, G.~Giacomin, and C.~Poquet.
\newblock Synchronization and random long time dynamics for mean-field plane
  rotators.
\newblock {\em Probability Theory and Related Fields}, pages 1--61, 2013.

\bibitem{Bolthausen1986}
E.~Bolthausen.
\newblock Laplace approximations for sums of independent random vectors.
\newblock {\em Probab. Theory Relat. Fields}, 72(2):305--318, 1986.

\bibitem{Bovier:2014aa}
A.~Bovier and H.~Mayer.
\newblock A conditional strong large deviation result and a functional central
  limit theorem for the rate function.
\newblock {\em ArXiv e-prints}, Nov. 2014.

\bibitem{MR1262893}
P.~Cattiaux and C.~L{{\'e}}onard.
\newblock Minimization of the {K}ullback information of diffusion processes.
\newblock {\em Ann. Inst. H. Poincar{\'e} Probab. Statist.}, 30(1):83--132,
  1994.

\bibitem{MR1307957}
P.~Cattiaux and C.~L{{\'e}}onard.
\newblock Large deviations and {N}elson processes.
\newblock {\em Forum Math.}, 7(1):95--115, 1995.

\bibitem{Cerf2007}
R.~Cerf.
\newblock {\em On {C}ram{\'e}r's theory in infinite dimensions}, volume~23 of
  {\em Panoramas et Synth{\`e}ses [Panoramas and Syntheses]}.
\newblock Soci{\'e}t{\'e} Math{\'e}matique de France, Paris, 2007.

\bibitem{Collet2010}
F.~Collet, P.~{Dai Pra}, and E.~Sartori.
\newblock A simple mean field model for social interactions: dynamics,
  fluctuations, criticality.
\newblock {\em J. Stat. Phys.}, 139(5):820--858, 2010.

\bibitem{MR976534}
F.~Comets.
\newblock Large deviation estimates for a conditional probability distribution.
  {A}pplications to random interaction {G}ibbs measures.
\newblock {\em Probab. Theory Related Fields}, 80(3):407--432, 1989.

\bibitem{MR1785454}
F.~Comets, N.~Gantert, and O.~Zeitouni.
\newblock Quenched, annealed and functional large deviations for
  one-dimensional random walk in random environment.
\newblock {\em Probab. Theory Related Fields}, 118(1):65--114, 2000.

\bibitem{thesis_dahms}
R.~Dahms.
\newblock {\em Long--time behavior of a spherical mean field model}.
\newblock PhD thesis, Technische Universit\"at Berlin, 2002.

\bibitem{daiPra96}
P.~{Dai Pra} and F.~den Hollander.
\newblock Mc{K}ean-{V}lasov limit for interacting random processes in random
  media.
\newblock {\em J. Statist. Phys.}, 84(3-4):735--772, 1996.

\bibitem{daido1987population}
H.~Daido.
\newblock Population dynamics of randomly interacting self-oscillators. i
  tractable models without frustration.
\newblock {\em Progress of Theoretical Physics}, 77(3):622--634, 1987.

\bibitem{Daido:2015aa}
H.~Daido.
\newblock Susceptibility of large populations of coupled oscillators.
\newblock {\em Physical Review E}, 91(1), 2015.

\bibitem{MR885876}
D.~A. Dawson and J.~G{{\"a}}rtner.
\newblock Large deviations from the {M}c{K}ean-{V}lasov limit for weakly
  interacting diffusions.
\newblock {\em Stochastics}, 20(4):247--308, 1987.

\bibitem{Dembo1998}
A.~Dembo and O.~Zeitouni.
\newblock {\em Large deviations techniques and applications}, volume~38 of {\em
  Applications of Mathematics (New York)}.
\newblock Springer-Verlag, New York, second edition, 1998.

\bibitem{Dudley1966}
R.~M. Dudley.
\newblock Convergence of {B}aire measures.
\newblock {\em Studia Math.}, 27:251--268, 1966.

\bibitem{Faugeras:2014ab}
O.~Faugeras and J.~MacLaurin.
\newblock Large deviations of an ergodic synchronous neural network with
  learning.
\newblock {\em ArXiv e-prints}, Apr. 2014.


\bibitem{MR1331217}
S.~Feng.
\newblock Large deviations for empirical process of mean-field interacting
  particle system with unbounded jumps.
\newblock {\em Ann. Probab.}, 22(4):2122--2151, 1994.

\bibitem{MR983373}
H.~F{{\"o}}llmer.
\newblock Random fields and diffusion processes.
\newblock In {\em \'{E}cole d'\'{E}t{\'e} de {P}robabilit{\'e}s de
  {S}aint-{F}lour {XV}--{XVII}, 1985--87}, volume 1362 of {\em Lecture Notes in
  Math.}, pages 101--203. Springer, Berlin, 1988.

\bibitem{Gantert:2015ab}
N.~Gantert, S.~S. Kim, and K.~Ramanan.
\newblock Large deviations for random projections of $\ell^p$ balls.
\newblock {\em ArXiv e-prints}, Dec. 2015.

\bibitem{MR3207725}
G.~Giacomin, E.~Lu{\c{c}}on, and C.~Poquet.
\newblock Coherence {S}tability and {E}ffect of {R}andom {N}atural
  {F}requencies in {P}opulations of {C}oupled {O}scillators.
\newblock {\em J. Dynam. Differential Equations}, 26(2):333--367, 2014.

\bibitem{doi:10.1137/110846452}
G.~Giacomin, K.~Pakdaman, X.~Pellegrin, and C.~Poquet.
\newblock Transitions in active rotator systems: Invariant hyperbolic manifold
  approach.
\newblock {\em SIAM Journal on Mathematical Analysis}, 44(6):4165--4194, 2012.

\bibitem{MR3336876}
G.~Giacomin and C.~Poquet.
\newblock Noise, interaction, nonlinear dynamics and the origin of rhythmic
  behaviors.
\newblock {\em Braz. J. Probab. Stat.}, 29(2):460--493, 2015.

\bibitem{MR2248897}
E.~Kosygina, F.~Rezakhanlou, and S.~R.~S. Varadhan.
\newblock Stochastic homogenization of {H}amilton-{J}acobi-{B}ellman equations.
\newblock {\em Comm. Pure Appl. Math.}, 59(10):1489--1521, 2006.

\bibitem{Kuramoto1984}
Y.~Kuramoto.
\newblock {\em Chemical oscillations, waves, and turbulence}, volume~19 of {\em
  Springer Series in Synergetics}.
\newblock Springer-Verlag, Berlin, 1984.

\bibitem{2007arXiv0710.1461L}
C.~{L{\'e}onard}.
\newblock {A large deviation approach to optimal transport}.
\newblock {\em ArXiv e-prints}, Oct. 2007.

\bibitem{Lucon2011}
E.~Lu\c{c}on.
\newblock Quenched limits and fluctuations of the empirical measure for plane
  rotators in random media.
\newblock {\em Electronic Journal of Probability}, 16:792--829, 2011.

\bibitem{Luconthesis}
E.~Lu{\c c}on.
\newblock {\em Coupled oscillators, disorder and synchronization}.
\newblock PhD thesis, Universit{\'e} Pierre et Marie Curie,
  http://tel.archives-ouvertes.fr/tel-00709998, 2012.

\bibitem{Lucon:2015aa}
E.~Lu{\c c}on and C.~Poquet.
\newblock Long time dynamics and disorder-induced traveling waves in the
  stochastic {K}uramoto model.
\newblock {\em ArXiv e-prints}, May. 2015.
\newblock To appear in Ann. Inst. H. Poincar\'e Probab. Stat.

\bibitem{Muller:2015aa}
P.~E. M{\"u}ller.
\newblock Path large deviations for interacting diffusions with local
  mean-field interactions.
\newblock {\em ArXiv e-prints}, Dec. 2015.

\bibitem{MR0272004}
J.~Neveu.
\newblock {\em Bases math{\'e}matiques du calcul des probabilit{\'e}s}.
\newblock Pr{\'e}face de R. Fortet. Deuxi{\`e}me {\'e}dition, revue et
  corrig{\'e}e. Masson et Cie, {\'E}diteurs, Paris, 1970.

\bibitem{Oelsch1984}
K.~Oelschl{\"a}ger.
\newblock A martingale approach to the law of large numbers for weakly
  interacting stochastic processes.
\newblock {\em Ann. Probab.}, 12(2):458--479, 1984.

\bibitem{MR2779403}
F.~Rassoul-Agha and T.~Sepp{{\"a}}l{{\"a}}inen.
\newblock Process-level quenched large deviations for random walk in random
  environment.
\newblock {\em Ann. Inst. Henri Poincar{\'e} Probab. Stat.}, 47(1):214--242,
  2011.

\bibitem{Sakaguchi1986}
H.~Sakaguchi and Y.~Kuramoto.
\newblock A soluble active rotator model showing phase transitions via mutual
  entrainment.
\newblock {\em Progr. Theoret. Phys.}, 76(3):576--581, 1986.

\bibitem{SznitSflour}
A.-S. Sznitman.
\newblock Topics in propagation of chaos.
\newblock In {\em {\'E}cole d'{\'E}t{\'e} de {P}robabilit{\'e}s de
  {S}aint-{F}lour {XIX}---1989}, volume 1464 of {\em Lecture Notes in Math.},
  pages 165--251. Springer, Berlin, 1991.

\bibitem{Zuijlen:2016aa}
W.~van Zuijlen.
\newblock Large deviations of continuous regular conditional probabilities.
\newblock {\em ArXiv e-prints}, May. 2016.

\end{thebibliography}
\end{document}